\documentclass[12pt,a4paper]{amsart}

\usepackage{amssymb}
\usepackage{amscd}

\setlength{\textwidth}{418pt}
\setlength{\oddsidemargin}{17.5pt}
\setlength{\evensidemargin}{17.5pt}

\def\frak{\mathfrak}
\def\Bbb{\mathbb}
\def\Cal{\mathcal}

\let\phi\varphi
\let\ocirc\circledcirc

\renewcommand{\Re}{\operatorname{Re}}

\newcommand{\x}{\times}
\renewcommand{\o}{\circ}

\newcommand{\al}{\alpha}
\newcommand{\be}{\beta}

\newcommand{\ep}{\epsilon}

\newcommand{\la}{\lambda}
\newcommand{\om}{\omega}
\newcommand{\ph}{\phi}
\newcommand{\ps}{\psi}
\renewcommand{\th}{\theta}
\newcommand{\si}{\sigma}

\newcommand{\Ga}{\Gamma}
\newcommand{\La}{\Lambda}

\newcommand{\vol}{\operatorname{vol}}
\newcommand{\im}{\operatorname{im}}
\newcommand{\id}{\operatorname{id}}

\newcommand{\Ad}{\operatorname{Ad}}
\newcommand{\ad}{\operatorname{ad}}

\newcommand{\rpl}                         
{\mbox{$
\begin{picture}(12.7,8)(-.5,-1)
\put(0,0.2){$+$}
\put(4.2,2.8){\oval(8,8)[r]}
\end{picture}$}}

\numberwithin{equation}{section}

\newcounter{theorem}
\numberwithin{theorem}{section}
\newtheorem{thm}[theorem]{Theorem}
\newtheorem*{thm*}{Theorem \thesubsection}
\newtheorem{lemma}[theorem]{Lemma}
\newtheorem{prop}[theorem]{Proposition}
\newtheorem{cor}[theorem]{Corollary}
\newtheorem*{lemma*}{Lemma \thesubsection}
\newtheorem*{prop*}{Proposition \thesubsection}
\newtheorem*{cor*}{Corollary \thesubsection}

\theoremstyle{definition}
\newtheorem{definition}[theorem]{Definition}
\newtheorem*{definition*}{Definition \thesubsection}
\newtheorem{example}[theorem]{Example}
\newtheorem*{example*}{Example \thesubsection}
\theoremstyle{remark}
\newtheorem{remark}[theorem]{Remark}
\newtheorem*{remark*}{Remark \thesubsection}

\def\sideremark#1{\ifvmode\leavevmode\fi\vadjust{\vbox to0pt{\vss
 \hbox to 0pt{\hskip\hsize\hskip1em
 \vbox{\hsize3cm\tiny\raggedright\pretolerance10000
  \noindent #1\hfill}\hss}\vbox to8pt{\vfil}\vss}}}%
                        
                                                   %

\begin{document}
\title{Parabolic Compactification\\ of Homogeneous Spaces}

\author{Andreas \v Cap, A.\ Rod Gover, and  Matthias Hammerl}

\address{A.\v C. \& M.H.: Faculty of Mathematics\\
University of Vienna\\
Oskar--Morgenstern--Platz 1\\
1090 Wien\\
Austria\\
A.R.G.:Department of Mathematics\\
  The University of Auckland\\
  Private Bag 92019\\
  Auckland 1142\\
  New Zealand}
\email{Andreas.Cap@univie.ac.at}
\email{r.gover@auckland.ac.nz}
\email{matthias.hammerl@univie.ac.at}

\begin{abstract}
In this article, we study compactifications of homogeneous spaces
coming from equivariant, open embeddings into a generalized flag
manifold $G/P$. The key to this approach is that in each case $G/P$ is the
homogeneous model for a  parabolic geometry; the theory of such geometries provides a
large supply of geometric tools and invariant differential operators
that can be used for this study. A classical theorem of J.~Wolf shows
that any involutive automorphism of a semisimple Lie group $G$ with
fixed point group $H$ gives rise to a large family of such
compactifications of homogeneous spaces of $H$. Most examples of
(classical) Riemannian symmetric spaces as well as many non--symmetric
examples arise in this way. A specific feature of the approach is that
any compactification of that type comes with the notion of ``curved
analog'' to which the tools we develop also apply. The model example
of this is a general Poincar\'e--Einstein manifold forming the curved analog
of the conformal compactification of hyperbolic space.

In the first part of the article, we derive general tools for the
analysis of such compactifications. In the second part, we analyze two
families of examples in detail, which in particular contain
compactifications of the symmetric spaces $SL(n,\Bbb R)/SO(p,n-p)$ and
$SO(n,\Bbb C)/SO(n)$. We describe the decomposition of the
compactification into orbits, show how orbit closures can be described
as the zero sets of smooth solutions to certain invariant differential
operators and prove a local slice theorem around each orbit in these
examples.
  \end{abstract}

\maketitle

\subjclass{}

\pagestyle{myheadings} \markboth{\v Cap, Gover, Hammerl}{Parabolic
  Compactifications} 

\thanks{}

\section{Introduction}\label{1}

To study non-compact manifolds it can be helpful to add a suitable
boundary structure so that the resulting space is compact. Within
geometry this idea is perhaps historically most well known in
hyperbolic geometry and \cite{Mo}, for example, provides a striking
application of this point of view. A special case that has received a
lot of attention in the literature is the question of compactifying
symmetric and locally symmetric spaces, see for example the monograph
\cite{BorelJi} and references therein. The prototype case of such a compactification is
provided by the Poinncar\'e ball compactification of real hyperbolic
space. This compactifies hyperbolic space by adding a sphere as a
boundary at infinity. While completeness of the hyperbolic metric
implies that it cannot be smoothly extended to the boundary, its
underlying conformal structure does admit a smooth extension, thus
endowing the boundary sphere with its standard conformal structure. Penrose's
concept of conformal compactness provides an analogous notion for more
general (pseudo--)Riemannian manifolds, with Poincar\'e--Einstein
manifolds forming an important special case. These ideas have
been been extremely fruitful with applications to topics like
negatively curved Riemannian manifolds, geometric scattering, general
relativity (GR), conformal geometry, and the AdS/CFT correspondence of
physics, see
\cite{Maldacena,FeffGr,Fr,GrZ,MazzeoMel,Mel,Penrose,Vasy}
but also in representation theory and harmonic analysis, see e.g.\
\cite{KOSS}.

During the last years a new conceptual approach to conformal
compactness and Poincar\'e--Einstein manifolds has been developed, see
\cite{G-sigma,Gover:P-E}. Rather than viewing the metric in the interior and
the conformal structure on the boundary as the basic objects, these
approaches are based on the conformal structure on a manifold with
boundary, together with a defining density for the boundary which
automatically selects a metric from the conformal class in the
interior. The advantage of this approach is that, using tools of
conformal geometry, it immediately leads to a host of geometric
objects that admit a smooth extension to the boundary, and indeed
beyond the boundary. These then provide powerful
tools for efficiently and systematically treating many of the problems
linked to the applications mentioned above
\cite{GLW-Mem,GW-boundarycalc,GW-volume}. This description
also leads to an interpretation of Poincar\'e--Einstein metrics as a
certain type of reduction of conformal holonomy. The general versions
of tractor calculus, see \cite{CG-TAMS}, and the theory of holonomy
reductions of Cartan geometries developed in \cite{hol-red} then
shows that many of these ideas can be extended from conformal geometry
to the class of Cartan geometries. The latter includes the rich class
of parabolic geometries \cite{book}.

In particular, analogs of the concept of conformal compactness in the
setting of projective and of c--projective differential geometry have
been introduced and studied in
\cite{Proj-comp,Proj-Comp2,CProj-Comp,ageom}. In all these cases, the
theory of parabolic geometries and the machinery of
Bernstein--Gelfand--Gelfand sequences introduced in \cite{CSS-BGG} and
\cite{Calderbank-Diemer} provide conceptual ways to obtain and
identify geometric quantities that automatically admit a smooth
extension to the boundary. This was a crucial input for the
developments in the articles referred to above. At the same time,
these procedures  produce families of  PDEs, both on the  in the
interior and on the boundary, that are naturally associated with the compactified
geometry.
Most critically some of these equations also come
with canonical solutions that combine with the underlying higher order
Cartan geometry to define the interior and boundary geometries, as well
providing a concrete link between these.

From the perspective of compactifying symmetric spaces, the example of
hyperbolic space is deceptively simple. In general one can definitely
not expect that it will be sufficient to just add, to a given non-compact
symmetric space, a boundary of codimension one at infinity. Rather one has to
expect a complicated family of boundary components of different
dimension, attached to each other in a highly complicated
fashion. Correspondingly, the compactifications are often mainly
understood from the point of view of topology.  Thus even getting to a
setting that allows for a notion of the geometry of such a compactification
is often very difficult.

The aim of the current paper is to apply the ideas on holonomy reductions of
parabolic geometries, on the level of their homogeneous models, to construct and study
compactifications of certain homogeneous spaces, among which there are many symmetric
spaces. The basic strategy is to consider a generalized flag variety $G/P$ of a
semisimple Lie group $G$ as well as a subgroup $H\subset G$, such that the obvious
action of $H$ on $G/P$ has at least one open orbit. A classical result of J.~Wolf
(see Theorem \ref{2.6}) implies that this is the case (for any choice of parabolic
subgroup $P\subset G$) if $H\subset G$ is the fixed point group of an involutive
automorphism of $G$. This immediately leads to a large number of interesting
examples. Choosing a point in an open orbit and denoting by $K$ the stabilizer of
that point in $H$, the orbit gets identified with the homogeneous space $H/K$. Since
$G/P$ is compact, the closure of the orbit forms a natural compactification
$\overline{H/K}$ of $H/K$. By construction $\overline{H/K}$ can be written as a union
of $H$--orbits, which automatically are initial submanifolds of $G/P$, thus providing
a decomposition of the boundary.
 
Now $G/P$ is the homogeneous model of parabolic geometries of type
$(G,P)$. Such geometries can be restricted to open subsets, so $H/K$
carries an $H$--invariant locally flat parabolic geometry of that
type. Moreover, by construction this geometry admits a smooth
extension across the boundary of $H/K$ in $G/P$, thus providing a
first set of geometric objects that extend smoothly. Moreover, any
$H$--invariant element in a representation $\Bbb V$ of $G$ defines a
parallel section in a tractor bundle naturally associated to the
parabolic geometry. Such a section can be projected to a section of
simpler bundle, which lies in the kernel of an overdetermined linear
differential operator (a so--called ``first BGG operator'') naturally
associated to the parabolic geometry. Both sections are defined on all
of $G/P$ and thus extend smoothly to (and across) the boundary and, by
$H$--invariance, they can be often used to distinguish between
different $H$--orbits in the boundary. The fact that one obtains
parallel sections of tractor bundles and sections in the kernel of a
first BGG operator, respectively, allows one to get good control on
the derivatives (and even on higher jets) of such BGG solutions based
on information coming from representation theory. We show that this
can be used to obtain detailed descriptions of the topology,
smooth structure, and geometry of the boundary structure.

We want to point out at this stage that, although in this article we
formally only consider homogeneous
spaces, the ideas introduced here automatically extend to more general
settings. On the one hand, there is the possibility to pass to certain
non-compact locally homogeneous spaces by
factoring by appropriate discrete subgroups of $H$. The model example
here is again provided by hyperbolic space, for which one may
factor by convex cocompact subgroups of $SO_0(n+1,1)$ and still
attach a boundary that locally is as in the model \cite{Ratcliffe}. On
the other hand, there is the possibility to replace the homogeneous
model $G/P$ by a curved parabolic geometry of type $(G,P)$ endowed
with a holonomy reduction (as a Cartan geometry) to the subgroup
$H\subset G/P$ in the sense of \cite{hol-red}. The theory developed in
that reference shows that the boundary structure in such a curved
holonomy reduction can be nicely compared to the situation on the
homogeneous model via the so--called curved orbit decomposition. From
the point of view of the ambient parabolic geometry, the existence of
such a holonomy reduction of course is a very restrictive condition,
but as examples like Poincar\'e Einstein manifolds, the K\"{a}hler
analogues of these, and their generalizations show, it is to be
expected that there are many interesting examples in the curved case.

Let us briefly outline how the article is organized. The general
theory of compactifications, as outlined above, is developed in
Section \ref{2}. We start with a general definition of homogeneous
compactifications and this already leads to first results on the
boundary structure. We then specialize to parabolic compactifications
defined by open $H$--orbits in $G/P$. The interpretation of the
conformal and projective compactifications of hyperbolic space from
this point of view and some generalizations are discussed in Example
\ref{ex2.2}. Next, we recall Wolf's theorem and describe several
examples of parabolic compactifications arising from it in Example
\ref{ex2.3}. Proposition \ref{prop2.4} describes the infinitesimal
structure of a neighborhood of an $H$--orbit in $G/P$ and Proposition
\ref{prop2.5} shows that the subgroup $H\subset G$ can always be
characterized as a stabilizer of an element in an appropriate
representation of $G$, thus leading to a parallel section of the
corresponding tractor bundle. The background on tractor bundles and
the machinery of BGG sequences we need is collected in Sections
\ref{2.6} and \ref{2.7}. Theorem \ref{thm2.6} describes the basic
relation between parallel sections of tractor bundles and solutions of
first BGG operators, while Proposition \ref{prop2.7} shows how the BGG
machinery can be used to recover information on the jets of such
solutions. The last part of Section \ref{2} introduces a
generalization of defining functions and defining densities for
hypersurfaces to defining sections of vector bundles for submanifolds
of higher codimension. These smooth objects provide a geometric and
analytic bridge between the different components, and in particular
they provide a very convenient way to formulate and prove many of our
results.

In the remaining two Sections of the article, we apply these general tools to the
study of two families of substantial examples of parabolic compactifications. Section
\ref{3} deals with the case that $H=SO(p,q)\subset SL(p+q,\Bbb R)=G$, which obviously
is covered by Wolf's theorem. We focus on the case that $P\subset G$ is a maximal
parabolic, so that $G/P$ is the Grassmannian $Gr(i,\Bbb R^{p+q})$ in which the
$H$--orbits are determined by rank and signature, and give some indication on how to
deal with more general flag varieties. The orbit structure and the infinitesimal
structure around an orbit on the Grassmannian is described in Proposition
\ref{prop3.1}. In particular, for the $i=p$, the $H$--orbit of positive subspaces is
the Riemannian symmetric space $SO(p,q)/S(O(p)\x O(q))$ which we take as the model
example for this section. This symmetric space clearly carries an $H$--invariant
Grassmannian structure (i.e.~a decomposition of its tangent bundle as a tensor
product) compatible with its Riemannian metric, and the main feature of the
compactification we construct is that this Grassmannian structure admits a smooth
extension to the boundary. The inner product stabilized by $H$ can be directly
converted into a parallel section of a tractor bundle, which is the main tool used to
analyze the compactification.

As a first application of the theory of parabolic geometries, we show in Theorem
\ref{thm3.2}, how the closures of $H$--orbits in the Grassmannian can be described as
zero--loci of first BGG solutions that admit a smooth extension to the boundary. The
main result in this section is a slice theorem, see Theorem \ref{thm3.3} and
Corollary \ref{cor3.3} which shows that locally, a neighborhood of an orbit can
always be described in terms of a product of the orbit with a neighborhood of zero in
the closure of the set of non--degenerate symmetric matrices of appropriate size and
signature in the space of all symmetric matrices. The key ingredient to this is the
construction of a defining section for the orbit obtained by restricting an
appropriate first BGG solution to an appropriate subbundle of the tautological bundle
on the Grassmannian. As a last result in this case, we show in Proposition
\ref{prop3.4} that for the boundary components of codimension one, one may avoid such
choices and find a canonical defining density, which is a solution of a first BGG
operator of order three.

In Section \ref{4} of the article, we similarly study the case that $H:=SO(n,\Bbb
C)\subset SO_0(n,n)=:G$, which again is covered by Wolf's theorem, and in particular
leads to a compactification of the Riemannian symmetric space $SO(n,\Bbb
C)/SO(n)$. The generalized flag varieties of $G$ are given by manifolds of isotropic
flags, and we quickly specialize to the case of the Grassmannians of maximal
isotropic subspaces, where the $H$--orbits are again described in terms of rank and
signature, but ranks always drop in steps of two in this case. Moreover, it is well
known that there are two such Grassmannians, given by self--dual respectively
anti--self--dual maximally isotropic subspaces. We briefly indicate how to deal with
more general isotropic Grassmannians and flag manifolds, for which the orbit
structure becomes significantly more complicated.

While initially there is no Hermitian structure in the setup, it turns out that
Hermitian matrices play a key role in the slice theorem for the $H$--orbits in this
case, see Theorem \ref{thm4.3}. While we also obtain a description of orbit closures
in terms of first BGG solutions in this case (see Theorem \ref{thm4.2}), we do not
see a way to construct a natural defining density for the hypersurface components of
the boundary.

\section{Parabolic compactifications}\label{2} 
In this section, we introduce the concept of a parabolic
compactification and explain the advantages of such
compactifications. Finally, we describe several sources for large
families of examples.

\subsection{Homogeneous compactifications}\label{2.1} 
A natural idea for constructing a compactification of some non--compact
space $X$ is to embed it into a compact space $Y$ and then form the
closure $\overline{X}$ in $Y$. This is of course compact and contains
$X$ as a dense subspace, thus defining a compactification of $X$ in
the usual topological sense. In case that $X$ is a smooth manifold one may
of course try to embed $X$ into a compact smooth manifold $Y$ and then
form the closure in there. However, at this level of generality, it is
very hard to control the ``boundary'' $\overline{X}\setminus X$ that
is added to $X$ in order to obtain the compactification. For example it may be
unclear whether this boundary inherits some kind of intrinsic smooth structure,
and $\overline{X}$ may be very badly behaved.

Let us specialize to the case that $X$ is a homogeneous space $X=H/K$,
where $H$ is a Lie group and $K\subset H$ is a closed subgroup. Then
one can try to exploit the homogeneous structure by embedding $H/K$
into a compact homogeneous space in an equivariant way. This leads to
the concept of a homogeneous compactification. 

\begin{definition}\label{def2.1}
Let $H$ be a Lie group and $K\subset H$ a closed subgroup. Then a
\textit{homogeneous compactification} of $H/K$ is defined by an
embedding $i:H\hookrightarrow G$, of $H$ as a closed subgroup of a Lie
group $G$, and   a closed subgroup $P\subset G$ such that $G/P$ is
compact and $P\cap H=K$.
\end{definition}

An embedding as in this definition descends to an embedding
$\underline{i}:H/K\hookrightarrow G/P$ of the homogeneous space $H/K$
into the compact homogeneous space $G/P$, which by construction is
$H$--equivariant. Hence one obtains a compactification of $H/K$ by
forming the closure $\overline{H/K}\subset G/P$, and we will also
refer to this closure as the homogeneous compactification of $H/K$.

In this situation, we can already get some basic information on the
structure of the boundary $\overline{H/K}\setminus H/K$ that is
added to $H/K$ in order to obtain the compactification. Indeed, the
closed subgroup $H\subset G$ naturally acts on the homogeneous space
$G/P$ by the restriction of the canonical action of $G$. Using this,
we can prove the following result.

\begin{prop}\label{prop2.1}
  Consider a homogeneous compactification of $H/K$ given by the
  embedding $H/K\hookrightarrow G/P$. Then the boundary
  $\overline{H/K}\setminus H/K$ naturally is a union of $H$--orbits in
  $G/P$, each of which is an initial submanifold of $G/P$.
\end{prop}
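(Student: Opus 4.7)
The plan is to use equivariance of the embedding $\underline{i}:H/K\hookrightarrow G/P$ to identify $H/K$ with a single $H$-orbit in $G/P$, and then to deduce that the boundary is a union of $H$-orbits purely from continuity of the $H$-action. The initial submanifold claim will then follow from the standard orbit theorem for Lie group actions.

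First I would observe that $\underline{i}$ sends $hK$ to $hP$, so $\underline{i}(H/K)$ is precisely the $H$-orbit of the base point $eP\in G/P$, whose stabilizer in $H$ equals $H\cap P=K$ by the defining condition. In particular, the image of $H/K$ is an $H$-invariant subset of $G/P$. Next, because each $h\in H$ acts on $G/P$ as a homeomorphism (the $G$-action on $G/P$ being continuous, indeed smooth), closures of $H$-invariant subsets are again $H$-invariant. Therefore $\overline{H/K}\subset G/P$ is $H$-invariant, and hence so is the set-theoretic difference $\overline{H/K}\setminus H/K$. This difference must then be a disjoint union of $H$-orbits in $G/P$.

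For the initial submanifold property, I would appeal to the standard result that every orbit of a smooth Lie group action $H\times M\to M$ carries a unique smooth manifold structure, namely that of $H/H_x$ for the closed isotropy subgroup $H_x\subset H$, in such a way that the inclusion into $M$ is an injective immersion with the initial (or \emph{weakly embedded}) property: any smooth map from a manifold into $M$ whose image lies in the orbit is automatically smooth into the orbit. Applying this to the action of $H$ on $G/P$ gives the desired smooth structure on each boundary orbit. The one point of care is that such boundary orbits need not be embedded submanifolds of $G/P$---one must work with the initial submanifold notion, since general Lie group orbits can fail to be locally closed (as already with the irrational line on a torus). Beyond this caveat, every step is a direct consequence of continuity of the action together with the orbit theorem, so no real obstacle arises.
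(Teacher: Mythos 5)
Your proposal is correct and follows essentially the same route as the paper: $H$-invariance of the closure forces the boundary to be a union of $H$-orbits, and the initial submanifold property comes from the standard orbit theorem (the paper cites the realization of orbits as leaves of a non-constant-rank foliation, Theorem 5.14 of \cite{KMS}, which is the same underlying fact you invoke). Your caveat that orbits need not be embedded is also in the spirit of the paper's formulation.
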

\begin{proof}
  As a subset of $G/P$, the space $H/K$ of course is $H$--invariant,
  which readily implies that the closure $\overline{H/K}\subset G/P$
  is $H$--invariant, too. But given a smooth action of $H$ on a
  manifold, any invariant subset is  a union of orbits. On the other
  hand, since the orbits of a smooth action can be realized as leaves
  of a foliation (of non--constant rank), they are automatically
  initial submanifolds, compare with Theorem 5.14 in \cite{KMS}.
\end{proof}

This statement means that the smooth structure of the individual
$H$--orbits in $G/P$ is completely understood: Given one of the
orbits, say $\Cal O:=H\cdot gP\subset G/P$ let $L$ be the stabilizer
of $gP$ in $H$. Then there is an injective immersion $j:H/L\to G/P$
whose image coincides with $\Cal O$. So we can form $j^{-1}:\Cal O\to
H/L$ and for any manifold $M$, a function $f:M\to G/P$ with values in
$\Cal O$ is smooth if and only if $j^{-1}\o f:M\to H/L$ is
smooth.

This result is just a small first step. At this point the question of how the
different orbits are ``pieced together'', and what additional
structure each might have, remains to be resolved. 

\subsection{Parabolic Compactifications}\label{2.2} 

We now specialize homogeneous compactifications to the case that $G$ is
a semisimple Lie group and $P\subset G$ is a parabolic subgroup. It is
well known that this automatically implies that $G/P$ is
compact. Hence we consider an inclusion $H\hookrightarrow
G$, and put $K=H\cap P$. This makes the $H$--orbit of $o:=eP\in G/P$
isomorphic to $H/K$ and we consider the compactification of $H/K$
given by the closure of this orbit. In fact, we specialize things
a bit further, as follows.

\begin{definition}\label{def2.2}
Let $H$ be a Lie group and $K\subset H$ a closed subgroup. Then a
\textit{parabolic contactification} of the homogeneous space $H/K$ is a
homogeneous compactification, for which the group $G$ is semisimple,
the subgroup $P\subset G$ is parabolic and which has the property that
$H/K$ is open in $G/P$.
\end{definition}

The motivation for this specialization is the following. For a
parabolic subgroup $P$ in a semisimple Lie group $G$, the homogeneous
space $G/P$ carries a natural geometric structure, which is fairly
well understood. Indeed, $G/P$ is the homogeneous model of parabolic
geometries of type $(G,P)$. This means that immediately a large number
of geometric tools are available. Since parabolic geometries can be
restricted to open subsets, the homogeneous space $H/K$ inherits a
locally flat parabolic geometry of type $(G,P)$. Since this is an
$H$--invariant geometric structure on $H/K$, its existence is usually
clear in advance. However it is just one of the $H$--invariant
geometric structures available on $H/K$ and for this specific
geometric structure we get the crucial additional information that it
admits a smooth extension across the boundary of the
compactification. The same holds for all bundles and natural
operations associated to this structure. All together these provide powerful tools
to study the structure of the boundary and its relation to the geometry  $H/K$.

The inclusion $H\hookrightarrow G$  has a nice interpretation
in the language of parabolic geometries of type $(G,P)$. Indeed, it
defines a \textit{holonomy reduction} of the Cartan geometry $G\to
G/P$ in the sense studied in \cite{hol-red}. The main topic of
\cite{hol-red} is extending properties of a reduction of the
homogeneous model to cases of curved geometries. Hence any parabolic
contactification automatically comes with a notion of \textit{curved
  analog} which we describe next. 

Suppose that $M$ is a compact smooth manifold endowed with a parabolic
geometry of type $(G,P)$ and a holonomy reduction corresponding to
$H\hookrightarrow G$. Then one of the basic results of \cite{hol-red}
is that the holonomy reduction gives rise to a decomposition of $M$
into \textit{curved orbits} according to the decomposition of $G/P$
into $H$--orbits. In particular, the base--point $o=eP$ for which
$K=H\cap P$ defines a type of curved orbit. Using this, we can give
the definition of curved analogs.

\begin{definition}\label{def-curved}
  Consider a parabolic compactification defined by
  $H/K\hookrightarrow G/P$, so $K=H\cap P$. Then a curved analog of
  this compactification is defined as follows. Consider a parabolic geometry of type
  $(G,P)$ on a smooth manifold $M$ together with a holonomy reduction
  to the group $H$ such that the curved orbit of the type determined
  by $eP\in G/P$ is non--empty and has compact closure in $M$. Then this closure provides the compactification
  of the given curved orbit.
\end{definition}

The conditions on compactness of the closure of the curved orbit is of
course satisfied automatically if the ambient manifold $M$ is
compact. Since the developments in \cite{hol-red} are phrased in a
geometric language, they provide the basis for the geometric study of
parabolic compactification that we are initiating in this article.

\begin{example}\label{ex2.2}
  We start with two examples of parabolic compactifications for which
  curved analogs are already intensively studied in the literature
  (and for these there is even a more general notion of curved analog
  than that mentioned above). Then we provide one more general family
  of examples.

(1) Consider the connected group $G:=SO_0(n+2,1)$  defined by a non--degenerate
bilinear form $\langle\ ,\ \rangle$ of signature $(n+2,1)$ on $\Bbb
R^{n+3}$. Choose a vector $v_0\in\Bbb R^{n+3}$ such that $\langle
v_0,v_0\rangle=1$ and let $H\subset G$ be the stabilizer of $v_0$ in
$G$. Via the action on the orthocomplement $(v_0)^\perp$, the group
$H$ is identified with $SO_0(n+1,1)$.

It is well known that $G$ acts transitively on the space of future
directed isotropic rays in $\Bbb R^{n+3}$ which is diffeomorphic to
$S^{n+1}$ and that this provides the homogeneous model for oriented
Riemannian conformal structures in dimension $n+1$. Now given an
isotropic ray $\ell$, we can look at $\langle v,v_0\rangle$ for some
$v\in\ell$.  Whether this is positive, negative or zero is independent
of the choice of $v$ and it is easy to see that this strict sign
describes the full decomposition of the space of isotropic rays into
$H$--orbits. So there are two open orbits and one closed orbit. Taking
$P\subset G$ to be the stabilizer of a ray $\Bbb R_+\cdot v$ with
$\langle v,v_0\rangle>0$, we thus obtain a parabolic compactification
$H/(H\cap P)\hookrightarrow G/P$. Now $K:=H\cap P$ clearly coincides
with the stabilizer in $H$ of the ray obtained by projecting $\ell$
into $(v_0)^\perp$, and since $\ell$ is isotropic, this projection
must be spanned by a vector that is {\em timelike}, in that it is of
negative length according to $\langle\ ,\ \rangle$. Thus $K$ coincides
with the stabilizer of that timelike vector, and
$H/K\cong SO_0(n+1,1)/SO(n+1)$, so this is hyperbolic space of
dimension $n+1$.

On the other hand, the closed curved obit is just the space of
isotropic rays in $(v_0)^\perp$, so this is the sphere $S^n$ viewed as
a homogeneous space of $SO_0(n+1,1)$. In fact the parabolic
compactification obtained in this case is the conformal
compactification of hyperbolic space with the conformal sphere as its
boundary at infinity (i.e.~the Poincar\'e model of hyperbolic
space) \cite{Gover:P-E}. Curved analogs of this parabolic contactification are defined
as reductions of conformal holonomy of conformal manifolds of
dimension $n+1$. As discussed in Section 3.5 of \cite{hol-red}, the
results of \cite{Gover:P-E} show that curved holonomy reductions of
type $H\hookrightarrow G$ are equivalent to Poincar\'e--Einstein
metrics. The general concept of conformally compact metrics, as
introduced by R.~Penrose, can then be viewed as a further weakening of
the concept of this type of holonomy reduction.

\medskip

(2) Consider $G:=SL(n+2,\Bbb R)$ and $H:=SO(n+1,1)\subset G$ defined
by the choice of a Lorentzian metric $\langle\ ,\ \rangle$ on $\Bbb
R^{n+2}$. Then $G$ acts on the space of rays in $\Bbb R^{n+2}$, thus
providing the homogeneous model $S^{n+1}$ of oriented projective
structures in dimension $n+1$. As in example (1) above, the
$H$--orbits on $S^{n+1}$ are determined by the signature of the
restriction of $\langle\ ,\ \rangle$ to a ray. So again there are two
open orbits corresponding to positive or negative restriction and
there is one closed orbit corresponding to lines which are isotropic
for $\langle\ ,\ \rangle$.

Choosing $P$ to be the stabilizer of a timelike ray, we obtain
$K:=P\cap H\cong SO(n+1)$ so we obtain another parabolic
compactification of hyperbolic space $H/K\hookrightarrow G/P$. As in
example (1) above, the boundary is the space of isotropic lines, so
this is again the conformal sphere $S^n$. However, this time we obtain
the Klein model, which is a projective compactification of hyperbolic
space. As discussed in detail in \cite{ageom}, while the
compactifications from examples (1) and (2) of course are isomorphic
as topological compactifications, they are different from a geometric
point of view. Curved analogs of this parabolic compactification are
Klein--Einstein structures, see \cite{ageom} and Sections 3.1--3.3 of
\cite{hol-red}. This concept can be further generalized to projective
compactness of order $2$, as introduced in \cite{Proj-comp}. 

\medskip

(3) Generalizing the situation from (2), consider the Grassmannian
$Gr(p,\Bbb R^{p+q})$, which can be realized as $G/P$, with
$G=SL(p+q,\Bbb R)$ and $P$ the stabilizer of a $p$--dimensional
subspace $V$ in the standard representation $\Bbb R^{p+q}$ of
$G$. Consider the action of the subgroup $H:=SO(p,q)\subset G$ on the
Grassmannian. Choosing $V$ in such a way that the bilinear form
defining $H$ is positive definite on $V$, we see that $K:=H\cap P\cong
S(O(p)\times O(q))$, so $H/K$ is a Riemannian symmetric space. Linear
algebra implies that the $H$--orbit of $V$ in $G/P$ consists of all
subspaces for which the restricted bilinear form is positive definite,
so this orbit is clearly open, and we have constructed a parabolic
compactification of $H/K$.

The space $G/P$ is the homogeneous model for (almost) Grassmannian
structures of type $(p,q)$. This geometry is available on manifolds of
dimension $pq$ and on a manifold $M$ such a structure is basically
given by an identification of $TM$ with a tensor product of two
auxiliary vector bundles of rank $p$ and $q$, respectively. The
additional information completing this is a suitable identification of
the top exterior powers of the auxiliary bundles. Now in the
symmetric decomposition $\frak h=\frak k\oplus\frak m$, it is easy to
see that $\frak m$ can be identified with the space matrices of size
$p\x q$ endowed with the natural representation of
$\frak k\cong\frak o(p)\x\frak o(q)$. This shows that $H/K$ carries an
$H$--invariant almost Grassmannian structure, which is also nicely
compatible with the $H$--invariant Riemannian metric. Now this
Riemannian metric is complete, so there is no hope to smoothly extend
is across the boundary of $H/K$ in $G/P$. In contrast, the
Grassmannian structure on $H/K$ does extend across the boundary.
\end{example}

\subsection{Wolf's theorem}\label{2.3} 
We next describe a general scheme which can be used to generate large
families of parabolic compactifications. The basis for this is the
following theorem of J.~Wolf from 1974, which is the main result of
\cite{Wolf:finite}.
\begin{thm}\label{thm2.3}
  Let $G$ be a real semisimple Lie group, let $\th:G\to G$ be an
  involutive automorphism, and let $H\subset G$ be the fixed point
  group of $\th$. Then for any parabolic subgroup $P\subset G$, the
  $H$--action on the generalized flag manifold $G/P$ has only finitely
  many orbits. In particular, there are both open and closed
  $H$--orbits in $G/P$.
\end{thm}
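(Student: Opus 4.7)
The proof naturally separates into the substantive finiteness claim and a formal argument for the existence of open and closed orbits.

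\textbf{Reduction to a minimal parabolic.} The plan is first to reduce to the case that $P$ is a minimal parabolic $P_0$. Every parabolic of $G$ contains a conjugate of a minimal one, so after a $G$-translation I may assume $P_0 \subset P$. The canonical projection $\pi \colon G/P_0 \to G/P$ is $G$-equivariant, in particular $H$-equivariant, and surjective; it maps $H$-orbits onto $H$-orbits, so finiteness on $G/P_0$ implies finiteness on $G/P$.

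\textbf{Finiteness on $G/P_0$: the main obstacle.} Here I would invoke a Cartan involution $\Theta$ of $G$ that commutes with $\theta$, which exists by the standard compatibility theorem in the structure theory of real semisimple Lie groups. Set $K := G^\Theta$ and $K_H := K \cap H$; the latter is a maximal compact subgroup of $H$. The Iwasawa decomposition $G = KP_0$ shows that $K$ acts transitively on $G/P_0$, and this identifies the $H$-orbits on $G/P_0$ with the $K_H$-orbits on the compact homogeneous space $K/(K \cap P_0)$. The core claim is that these double cosets are finite in number. Choosing a $\theta$- and $\Theta$-stable maximally $\mathbb R$-split torus $A \subset G$, one shows that every $K_H$-orbit has a representative in the normalizer of $A$, and these are then classified by a finite piece of combinatorics of the restricted Weyl group (essentially $\theta$-twisted involutions). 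The involutivity $\theta^2 = \mathrm{id}$ enters crucially via the splitting $\mathfrak g = \mathfrak h \oplus \mathfrak q$ into $\pm 1$-eigenspaces, which controls the fixed-point analysis on the flag variety. Equivalently, one may invoke Matsuki duality to translate finiteness of $H$-orbits into finiteness of $K_{\mathbb C}$-orbits on the complex flag variety $G_{\mathbb C}/P_{0,\mathbb C}$, where it follows from a Bruhat-type decomposition. This combinatorial control of the symmetric pair is where all the real work sits.

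\textbf{Open and closed orbits.} Once finiteness is established, the remainder is elementary and uses only the topology of compact manifolds together with Proposition \ref{prop2.1}. By that proposition, each $H$-orbit is an initial submanifold of $G/P$, hence a smooth immersed submanifold of some dimension. Since finitely many such submanifolds cover the compact connected manifold $G/P$, at least one of them must have dimension equal to $\dim G/P$, and such an orbit is necessarily open. For a closed orbit, pick an orbit $\mathcal O_0$ of minimal dimension among the finite collection. Its closure $\overline{\mathcal O_0}$ is $H$-invariant and hence a union of $H$-orbits; any orbit properly contained in $\overline{\mathcal O_0} \setminus \mathcal O_0$ would have strictly smaller dimension, contradicting the minimality of $\dim \mathcal O_0$. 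Hence $\mathcal O_0 = \overline{\mathcal O_0}$ is closed, and the proof would be complete modulo the finiteness input.
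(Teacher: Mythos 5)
The paper itself does not prove this statement: Theorem \ref{thm2.3} is quoted as the main result of Wolf's 1974 paper \cite{Wolf:finite}, so the only question is whether your argument stands on its own. Your reduction to a minimal parabolic $P_0$ is correct, and the deduction of an open orbit from finiteness (Baire/dimension counting over finitely many immersed orbits) is fine. In the closed-orbit argument there is a small unjustified step: lower semicontinuity of the orbit dimension only gives that orbits in $\overline{\mathcal O_0}\setminus\mathcal O_0$ have dimension $\leq\dim\mathcal O_0$, not strictly smaller; strictness (equivalently, local closedness of orbits) does hold here because the action on $G/P$ factors through a real algebraic action, but you would need to say this.

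The genuine gap is the core finiteness step. The Iwasawa decomposition identifies $G/P_0$ with $K/(K\cap P_0)$ as a $K$--space, but it does \emph{not} identify $H$--orbits with $K_H$--orbits: $H$ is not contained in $K$, and $K_H=K\cap H$ is compact, so its orbits are typically of positive codimension and infinite in number. Concretely, for $G=SL(2,\mathbb R)$ with $\theta$ given by conjugation by $\mathrm{diag}(1,-1)$, $H$ is the diagonal subgroup, $K_H=\{\pm I\}$ acts trivially on the circle $G/P_0$ (the real projective line), so there are uncountably many $K_H$--orbits, while $H$ has exactly four orbits. Thus both the claimed identification and the claimed finiteness of $K_H$--orbits are false, and with them your ``core claim''. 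What you then assert --- representatives in the normalizer of a $\theta$-- and $\Theta$--stable maximal split torus, parametrization by twisted involutions --- is precisely the substance of Wolf's theorem (in the refined form due to Matsuki, Rossmann and Springer), stated rather than proved; and the appeal to Matsuki duality is misplaced, since that duality concerns orbits of $G$ and of $K_{\mathbb C}$ on the \emph{complex} flag variety, whereas what is needed is finiteness on the real flag manifold $G/P_0$, which is the theorem itself. If your intention is to cite the finiteness, that is exactly what the paper does; as a proof, the argument does not go through.
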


The relevance of this result for our purposes is 
evident. Suppose that $H=G^\th$ is the fixed point group of an
involutive automorphism of a real semisimple Lie group. Then for any
parabolic subgroup $P\subset G$, there are open $H$--orbits in
$G/P$. Replacing $P$ by a conjugate subgroup if necessary, we may
assume that the $H$--orbit of $eP$ is open. Then using Proposition
\ref{prop2.1}, we get:

\begin{cor}\label{cor2.3}
Let $G$ be a semisimple Lie group, $\th:G\to G$ an involutive
automorphism and $H:=G^\th$ its fixed point group. Let $P\subset G$ be
a parabolic subgroup such that the $H$--orbit of $eP$ is open in
$G/P$. Then putting $K:=H\cap P$, we obtain a parabolic
compactification $H/K\hookrightarrow G/P$ whose boundary
$\overline{H/K}\setminus H/K$ is the union of finitely many
$H$--orbits in $G/P$, each of which is an initial submanifold.
\end{cor}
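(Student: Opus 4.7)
The plan is to verify that the hypotheses of Definition \ref{def2.2} are met and then combine Proposition \ref{prop2.1} with Theorem \ref{thm2.3} to control the boundary.

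First I would check that the data $(H,K,G,P)$ really defines a parabolic compactification in the sense of Definition \ref{def2.2}. The subgroup $H=G^\th$ is closed in $G$ because it is the fixed point set of the continuous automorphism $\th$, hence $K=H\cap P$ is closed in $H$ as the intersection of $H$ with the closed subgroup $P$. Compactness of $G/P$ is automatic because $P$ is parabolic in the semisimple group $G$. The embedding $\underline i:H/K\hookrightarrow G/P$, $hK\mapsto hP$, is $H$--equivariant by construction, and by hypothesis its image, the $H$--orbit of $eP$, is open in $G/P$. Thus we indeed have a parabolic compactification, and $\overline{H/K}\subset G/P$ is a genuine compactification of $H/K$.

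Next I would apply Proposition \ref{prop2.1} to this parabolic compactification. That proposition gives immediately that the boundary $\overline{H/K}\setminus H/K$ is $H$--invariant in $G/P$, hence a union of $H$--orbits, and that each such orbit is an initial submanifold of $G/P$ (being a leaf of the orbit foliation of the smooth $H$--action). The only remaining ingredient is the finiteness of the collection of boundary orbits, and this is precisely where Wolf's theorem enters: Theorem \ref{thm2.3} asserts that the total number of $H$--orbits in $G/P$ is finite, so in particular the boundary, which is a union of a subset of these orbits (namely all those distinct from the open orbit $H/K$), is a union of finitely many $H$--orbits.

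There is essentially no obstacle, since each piece of the statement is supplied by a cited result: the compactification structure by Definition \ref{def2.2} together with the hypotheses, the orbit/initial submanifold decomposition by Proposition \ref{prop2.1}, and the finiteness by Theorem \ref{thm2.3}. The only thing requiring a brief verification is that the closedness of $H\subset G$ follows from $H=G^\th$; once that is noted, the corollary is a direct combination of the preceding results.
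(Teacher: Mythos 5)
Your proposal is correct and matches the paper's argument: the corollary is stated there as an immediate consequence of Wolf's theorem (Theorem \ref{thm2.3}) combined with Proposition \ref{prop2.1}, exactly as you combine them, with the openness hypothesis guaranteeing the parabolic compactification structure of Definition \ref{def2.2}. Your extra verification that $H=G^\th$ is closed is a harmless explicit check of something the paper takes for granted.
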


We next discuss two sources of examples of involutive automorphisms of
a classical real semisimple group $G$, for which we also get a
description of $H$ as a stabilizer.

\begin{example}\label{ex2.3}
  (1) Let $\Bbb K$ be $\Bbb R$, $\Bbb C$, or $\Bbb H$, with
  conjugation being defined as the identity on $\Bbb R$ or as the usual
  conjugations on $\Bbb C$ and $\Bbb H$. Suppose that
  $G=SL_{\Bbb K}(V)$ for a finite dimensional $\Bbb K$--vector space
  $V$. Then let $\langle\ ,\ \rangle$ be a non--degenerate sesquilinear
  form on $V$ and for $A\in GL(V)$ let $A^*$ be the adjoint of $A$
  with respect to $\langle\ ,\ \rangle$, so
  $\langle Av,w\rangle=\langle v,A^*w\rangle$ for all $v,w\in V$. This
  definition readily implies that $(AB)^*=B^*A^*$ and hence
  $\th(A):=(A^{-1})^*$ defines an involutive automorphism of $G$ whose
  fixed point group $H$ is the special unitary group of
  $\langle\ ,\ \rangle$. Depending on $\Bbb K$, this group is
  (isomorphic to) $SO(p,q)$, $SU(p,q)$ or $Sp(p,q)$, where $(p,q)$ is
  the signature of $\langle\ ,\ \rangle$.

  Now the generalized flag varieties $G/P$ in this case are just the
  manifolds of all partial flags
  $V_1\subset V_2\dots\subset V_k\subset V$ of $\Bbb K$--subspaces of
  fixed dimensions. In the simplest case, $G/P$ is the Grassmannian of
  $i$--dimensional subspaces $V_1\subset V$. Here linear algebra shows
  that the $H$--orbits on $G/P$ are characterized by the rank and
  signature of the restriction of $\langle\ ,\ \rangle$ to $V_1$ and
  the open orbits are exactly those for which the restriction is
  non--degenerate. Taking the signatures to be $(p,q)$ on $V$ and
  $(r,i-r)$ on $V_1$ (which implies that $\langle\ ,\ \rangle$ is
  non--degenerate on $V_1$), we get that $V=V_1\oplus V_1^\perp$. This
  decomposition is preserved by the stabilizer of $V_1$ in $H$, which,
  for $\Bbb K=\Bbb R$ is thus easily seen to be isomorphic to
  $S(O(r,i-r)\x O(p-r,q-i+r))$ and similarly in the other cases. In
  particular taking $i=r=p$ and $\Bbb K=\Bbb C$, we obtain a parabolic
  compactification of the Hermitian symmetric space
  $SU(p,q)/S(U(p)\x U(q))$, which is easily seen to coincide with the
  Bailey--Borel compactification. For $\Bbb K=\Bbb R$ and $\Bbb H$, we
  obtain analogous compactifications for Riemannian symmetric spaces
  of $SO(p,q)$ and $Sp(p,q)$.
  
  The first parts of this directly generalize to all flag
  manifolds. For a flag
  $V_1\subset V_2\subset\dots\subset V_k\subset V$, the rank and
  signature of the restriction of $\langle\ ,\ \rangle$ to each
  constituent $V_j$ of the flag is of course constant along each
  orbit. Moreover, linear algebra easily shows that the flags for
  which each of these restrictions is non--degenerate of fixed
  signature form a single open orbit and that these are the only open
  orbits. In such a case, putting $\dim(V_j)=i_j$ for each $j$ and
  denoting the signature of the restriction by $(r_j,s_j)$, we thus
  obtain, for $\Bbb K=\Bbb R$, parabolic compactifications of the
  homogeneous spaces
$$
SO(p,q)/S(\textstyle\prod_{j=1}^{k+1}O(r_j-r_{j-1},s_j-s_{j-1})),
$$
where we put $r_0=s_0=0$, $r_{k+1}=p$ and $s_{k+1}=q$. This works in
the same way for the other ground fields. Notice that these are not
symmetric spaces for $k\geq 2$. It turns out that, in contrast to the
case of Grassmannians, the degenerate orbits are not determined by the
ranks and signatures of the restrictions of $\langle\ ,\ \rangle$ to
the constituents of the flag any more, so things get more
complicated. We will discuss this further in Section \ref{3.1} below.

\medskip

(2) The second source of involutions is even simpler and more
versatile. Again we put $\Bbb K=\Bbb R$, $\Bbb C$ or $\Bbb H$ and we
fix a $\Bbb K$--vector space $V$. For $G$ we either take the group
$SL_{\Bbb K}(V)$ or the subgroup of $SL_{\Bbb K}(V)$ preserving a
bilinear form $b$ on $V$ (which may be either symmetric or skew
symmetric, and either $\Bbb K$--bilinear or Hermitian). Then suppose
that $J:V\to V$ is either $\Bbb K$--linear or conjugate linear such
that $J^2=\ep \id_V$, where $\ep=\pm 1$. Then the map
$\th(A):=\ep JAJ$ defines an involutive automorphism on
$SL_{\Bbb K}(V)$. If a bilinear form $b$ is involved, then depending
on the properties of $b$, one has to require either that
$b(Jv,Jw)=b(v,w)$ or that $b(Jv,Jw)=-b(v,w)$ to ensure that $\th$
restricts to an involutive automorphism of the subgroup $G$. In any
case, the fixed point group $H$ of $\th$ then consists of those
elements $A\in G$ which commute with $J$.

The simplest instance of this is provided by identifying $\Bbb C^n$
with $\Bbb R^{2n}$ and viewing multiplication by $i$ as a linear
endomorphism $J$ of $\Bbb R^{2n}$ such that $J^2=-\id$. Starting from
$G=SL(2n,\Bbb R)$, we easily conclude from Wolf's theorem that
$H:=SL(n,\Bbb C)\subset G$ acts with finitely many orbits on each
partial flag manifold. In particular, we can look at the Grassmannian
$Gr(n,\Bbb R^{2n})$ for which linear algebra implies that there is
only one open $H$--orbit, namely the one consisting of those subspaces
$W$ which are totally real in the sense that $W\cap J(W)=\{0\}$.
Taking $P\subset G$ to be the stabilizer of $\Bbb R^n\subset\Bbb C^n$,
we see that $H\cap P=SL(n,\Bbb R)\subset SL(n,\Bbb C)$. Thus this
example gives rise to a parabolic compactification of the symmetric
space $SL(n,\Bbb C)/SL(n,\Bbb R)$.

Similarly, we can identify $\Bbb H^n$ with $\Bbb C^{2n}$, endowed with
the complex structure provided by $i\in\Bbb H$, and view
multiplication by $j\in\Bbb H$ as a conjugate linear map $J$ on $\Bbb
C^{2n}$ such that $J^2=-\id$. Hence we conclude that the subgroup
$SL(n,\Bbb H)$ acts with finitely many orbits on each generalized flag
manifold of $G:=SL(2n,\Bbb C)$. 

Bringing bilinear forms into the game, we obtain further interesting
examples. In the situation of $\Bbb C^n\cong\Bbb R^{2n}$, we can
consider a complex bilinear form $b$ on $\Bbb C^n$ and view its
imaginary part as a real symmetric bilinear form on $\Bbb R^{2n}$. It
is easy to see that this bilinear form has split signature $(n,n)$ and
clearly the involution defined by $J$ can be restricted to the
orthogonal group $G:=SO_0(n,n)$ of this split signature form. A
complex linear map which preserves the imaginary part of $b$ preserves
the whole form $b$, which easily implies that as a fixed point group,
we obtain $H:=SO(n,\Bbb C)\subset G$. We will discuss this case in
more detail in Section \ref{4}, where we shall in particular see that
it gives rise to a parabolic compactification of
$SO(n,\Bbb C)/SO(n)$, which is a Riemannian symmetric space of
non--compact type.

As a final example, let $\langle\ ,\ \rangle$ be the standard quaternionic Hermitian
form on $\Bbb H^n$ and consider the form $\om$ on $\Bbb H^{2n}$ defined by
$\om(\binom{p_1}{p_2},\binom{q_1}{q_2}):=\langle p_1,q_2\rangle-\langle
p_2,q_1\rangle$.
One immediately verifies that this form is skew--Hermitian in the quaternionic sense,
so the group of quaternionically linear automorphisms of $\Bbb H^{2n}$ that preserve
$\om$ form a Lie group $G$ that is commonly denoted by $SO^*(4n)$ and is a real form
of $SO(4n,\Bbb C)$. On the other hand $J\binom{p_1}{p_2}=\binom{p_2}{p_1}$ defines an
$\Bbb H$--linear automorphism of $\Bbb H^{2n}$ such that $\om(Jp,Jq)=-\om(p,q)$ holds
for all $p,q\in\Bbb H^{2n}$. Now one easily verifies that the Lie algebra $\frak g$
of $G$ consists of block matrices of the form
$\begin{pmatrix} A & B \\ C & -A^*\end{pmatrix}$, where $A$, $B$, and $C$ are
quaternionic $n\x n$--matrices, $B^*=B$ and $C^*=C$. As we have seen above, the fixed
point group $H$ of the involution determined by $J$ consists of all matrices
commuting with $J$. On the Lie algebra level, this means that for the above block
form, we in addition get $A=-A^*$ and $C=B$, which easily implies that $H$ is
identified with $GL(n,\Bbb H)$ via its action on the subspace of all elements of the
form $\binom{p_1}{p_1}$, which form an $n$--dimensional quaternionic subspace of
$\Bbb H^{2n}$.

The generalized flag varieties of $G$ are the manifolds of flags of quaternionic
subspaces in $\Bbb H^{2n}$, which are isotropic for $\om$. Looking at the example of
the Grassmannian of maximal (i.e.~$n$--dimensional) isotropic subspaces, we see that
the dimension of the span of a subspace and its image under $J$ is constant on
$H$--orbits. In particular, if $V$ is a maximally isotropic subspace such that $V$
and $J(V)$ span all of $\Bbb H^{2n}$ the $H$--orbit of $V$ is open. Now consider the
subspace of all vectors of the form $\binom{p_1}{0}$, which clearly has that
property. The Lie algebra of the stabilizer of that subspace in $H$ are matrices of
the form $\begin{pmatrix} A & 0\\ 0 & A \end{pmatrix}$ with $A^*=-A$. This shows that
the stabilizer will be a subgroup isomorphic to $Sp(n)\subset GL(n,\Bbb H)$. Thus
this example leads to a parabolic compactification of the Riemannian symmetric space
$GL(n,\Bbb H)/Sp(n)$.
\end{example}

\subsection{Orbits and infinitesimal transversals}\label{2.4}
Given a parabolic compactification of $H/K$ coming from an embedding
into $G/P$, we next discuss some general tools that can be used to
identify the $H$--orbits in $G/P$ as well as an infinitesimal model
for the local structure around such an orbit. The first thing to point
out here is that there is an issue of ``relative position'' between
$H$ and $P$ that may look unfamiliar. Looking at
$H=SO(p,q)\subset SL(p+q,\Bbb R)=G$ one usually does not specify the
bilinear form preserved by $H$ explicitly, since any two choice lead
to conjugate subgroups. Likewise, looking at the stabilizer
$P\subset G$ of an $i$--dimensional subspace of $\Bbb R^{p+q}$, the
subspace is not specified explicitly for the same reason. However,
fixing both data at the same time, the relative position is encoded in
the rank and signature of the restriction of the chosen bilinear form
to the chosen subspace, which is clearly invariant under simultaneous
conjugations of both subgroups.

To deal with these issues in practice, we will fix an $H$--orbit $\Cal
O\subset G/P$. Then we will choose a subgroup $H_{\Cal O}\subset G$
conjugate to $H$ in such a way that the orbit of the base point
$o:=eP\in G/P$ is isomorphic to $\Cal O$. Basically, this means that
we keep $P\subset G$ fixed and arrange $H_{\Cal O}$ in such a way that
the ``right'' relative position is achieved. Having done that, that
stabilizer of $o$ in $H_{\Cal O}$ coincides with $H_{\Cal O}\cap P$
and thus $\Cal O\cong H_{\Cal O}/(H_{\Cal O}\cap P)$. In particular,
having determined $H_{\Cal O}\cap P$, we can readily read off the
codimension of $\Cal O$ in $G/P$.  

Identifying $\Cal O$ with a homogeneous space of $H_{\Cal O}$ makes
all the standard tools for analyzing the geometry of homogeneous
spaces available in our situation. In particular, we can easily obtain
an infinitesimal model for a neighborhood of $\Cal O$ in $G/P$. To
formulate the result, observe that $H_{\Cal O}\subset G$ acts on
$\frak g$ by the restriction of the adjoint action. The subgroup
$H_{\Cal O}\cap P$ leaves the subspace $\frak p\subset\frak g$
invariant and hence naturally acts on $\frak g/\frak p$. Likewise,
$H_{\Cal O}\cap P$ naturally acts on $\frak h_{\Cal O}/(\frak h_{\Cal
  O}\cap\frak p)$. Now the inclusion $\frak h_{\Cal
  O}\hookrightarrow\frak g$ descends to an injection $\frak h_{\Cal
  O}/(\frak h_{\Cal O}\cap\frak p)\hookrightarrow \frak g/\frak p$
which is equivariant for the actions of $H_{\Cal O}\cap P$, so again
there is a natural representation on the quotient. 

\begin{prop}\label{prop2.4}
  Let $\Cal O\cong H_{\Cal O}/(H_{\Cal O}\cap P)\subset G/P$ be an
  $H$--orbit. Then the quotient bundle
  $T(G/P)|_{\Cal O}/T\Cal O\to\Cal O$ is the homogeneous vector bundle
  induced by the representation of $H_{\Cal O}\cap P$ on
  $(\frak g/\frak p)/(\frak h_{\Cal O}/(\frak h_{\Cal O}\cap\frak p))$
  described above.
\end{prop}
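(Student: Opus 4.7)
The plan is to identify all three bundles in sight as associated bundles to the principal $(H_{\Cal O}\cap P)$--bundle $H_{\Cal O}\to H_{\Cal O}/(H_{\Cal O}\cap P)\cong \Cal O$, and then read off the quotient.

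First I would recall the standard description $T(G/P)\cong G\x_P(\frak g/\frak p)$ coming from the principal $P$--bundle structure of $G\to G/P$, where $P$ acts on $\frak g/\frak p$ via $\Ad$ (the $P$--invariance of $\frak p$ makes this well defined). To transfer this description to the orbit, I would use that the diffeomorphism $H_{\Cal O}/(H_{\Cal O}\cap P)\to\Cal O$ is induced by the inclusion $H_{\Cal O}\hookrightarrow G$; consequently, the restriction to $\Cal O$ of the principal $P$--bundle $G\to G/P$ admits a reduction of structure group to $H_{\Cal O}\cap P$, with total space $H_{\Cal O}$, via the map $[h,p]\mapsto hp$ from $H_{\Cal O}\x_{H_{\Cal O}\cap P}P$ to $G|_{\Cal O}$. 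This reduction yields
\[
T(G/P)|_{\Cal O}\;\cong\;H_{\Cal O}\x_{H_{\Cal O}\cap P}(\frak g/\frak p),
\]
where $H_{\Cal O}\cap P$ acts by the restriction of the adjoint action on $\frak g/\frak p$.

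Next I would identify $T\Cal O$ within this picture. Applying exactly the same construction intrinsically to $\Cal O=H_{\Cal O}/(H_{\Cal O}\cap P)$ gives $T\Cal O\cong H_{\Cal O}\x_{H_{\Cal O}\cap P}(\frak h_{\Cal O}/(\frak h_{\Cal O}\cap\frak p))$. The inclusion of this bundle into $T(G/P)|_{\Cal O}$ is induced, at the level of fibers over $o=eP$, by the derivative of the orbit map $H_{\Cal O}\to G/P$, which factors through the natural linear injection $\frak h_{\Cal O}/(\frak h_{\Cal O}\cap\frak p)\hookrightarrow \frak g/\frak p$ described before the proposition. Since both sides are $H_{\Cal O}$--homogeneous and the inclusion is $H_{\Cal O}$--equivariant, this pointwise identification propagates to all of $\Cal O$. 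Taking the quotient of associated bundles (which is legitimate because the $(H_{\Cal O}\cap P)$--action clearly preserves the subspace $\frak h_{\Cal O}/(\frak h_{\Cal O}\cap\frak p)\subset\frak g/\frak p$) then yields the claimed identification.

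The only genuinely non--formal step is verifying that the reduction $H_{\Cal O}\x_{H_{\Cal O}\cap P}P\to G|_{\Cal O}$ is a diffeomorphism of principal $P$--bundles over $\Cal O$; this is where one uses that $\Cal O$ is an initial submanifold of $G/P$ (Proposition \ref{prop2.1}) together with the fact that the smooth bijection $H_{\Cal O}/(H_{\Cal O}\cap P)\to\Cal O$ is a diffeomorphism. Once this is in place, the rest is routine bookkeeping with associated bundle functoriality, and the main conclusion drops out.
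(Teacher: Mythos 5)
Your argument is correct and follows essentially the same route as the paper: both identify $T(G/P)|_{\Cal O}$ as the homogeneous bundle $H_{\Cal O}\x_{H_{\Cal O}\cap P}(\frak g/\frak p)$, identify $T\Cal O$ with the bundle induced by $\frak h_{\Cal O}/(\frak h_{\Cal O}\cap\frak p)$, and pass to the quotient. The only difference is presentational: you spell out the reduction of the principal $P$--bundle $G|_{\Cal O}$ to $H_{\Cal O}$ explicitly, while the paper simply invokes the general description of homogeneous vector bundles via the fiber over $o=eP$.
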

\begin{proof}
  By assumption, $\Cal O$ is the $H_{\Cal O}$--orbit of $o=eP\in
  G/P$. Clearly, the derivatives of the actions of elements of
  $H_{\Cal O}$ on $G/P$ make $T(G/P)|_{\Cal O}$ into a homogeneous
  vector bundle. It is well known that $T_o(G/P)=\frak g/\frak p$ with
  the natural representation of $P$ coming from the adjoint
  representation.  For the subgroup $H_{\Cal O}\cap P\subset P$, we
  obtain the representation described above, so the general
  description of homogeneous bundles shows that $T(G/P)|_{\Cal O}\cong
  H_{\Cal O}\x_{H_{\Cal O}\cap P}(\frak g/\frak p)$. Thus the result
  follows from the standard description of $T\Cal O$ as a homogeneous
  vector bundle.
\end{proof}

\begin{remark}\label{rem2.4}
  If the action of $H_{\Cal O}$ on $G/P$ were proper, then this result
  would directly imply a description of a tubular neighborhood of
  $\Cal O$ in $G/P$ and its decomposition into $H_{\Cal O}$--orbits
  via the slice theorem for proper actions, see \cite{Palais}. By
  properness of the action one would get a positive definite inner
  product on $\frak g/\frak p$ which is invariant under the action of
  $H_{\Cal O}\cap P$, thus defining a $H_{\Cal O}$--invariant
  Riemannian metric on $G/P$. The normal bundle of the orbit then is
  induced by the orthocomplement of
  $\frak h_{\Cal O}/(\frak h_{\Cal O}\cap\frak p)$ in
  $\frak g/\frak p$, and a slice is obtained from exponentiating
  this. However, since $G/P$ is compact, properness of the action
  would imply that $H_{\Cal O}$ has to be compact, which is absurd in
  our setting. Hence the general slice theorem is never applicable in
  our situation.

Still we shall derive local descriptions of a neighborhood of $\Cal O$
in $G/P$ which are similar to the one obtained from the slice theorem
in several cases below. Thus in these cases, the orbit structure close
to $\Cal O$ again is described in terms of orbits of the isotropy
group $H_{\Cal O}\cap P$ on the representation $(\frak g/\frak
p)/(\frak h_{\Cal O}/(\frak h_{\Cal O}\cap\frak p))$.
\end{remark}

\subsection{Characterizing the subgroup $H$}\label{2.5}
As indicated above, we want to study a parabolic compactification of a
homogeneous space $H/K$ using the restriction of the natural parabolic
geometry on the ambient homogeneous space $G/P$. One of the important
features of parabolic geometries is the existence of a special type of
geometric objects, which are called \textit{tractors}. These are
closely related to representations of the group $G$, so $H$--invariant
elements in representations of $G$ will be of particular importance
for the further development. 

Observe first that in all the cases discussed in Example \ref{ex2.3},
we can characterize $H$ as the stabilizer of an appropriate element in
a relatively simple representation of $G$. Indeed, in the situation of
part (1) of Example \ref{ex2.3}, we put $G=SL_{\Bbb K}(V)$ and $H$ is
the special unitary group of a Hermitian form $\langle\ ,\ \rangle$ of
$G$. Hermitian bilinear forms form a subrepresentation of $S^2V^*$, so
we have characterized $H$ as the stabilizer in $G$ of an element in
that representation. Likewise, in the situation of part (2) of of
Example \ref{ex2.3}, we have $G\subset SL_{\Bbb K}(V)$ and an
endomorphism $J$ of $V$. This defines an element in (an appropriate
subrepresentation of) the $G$--representation $V^*\otimes V$ of
endomorphisms of $V$, whose stabilizer in $G$ is $H$.

We next show that a similar characterization is available in a fairly
general situation. 

\begin{prop}\label{prop2.5}
Let $G$ be a classical simple Lie group with standard representation
$W$ and let $H\subset G$ be a connected closed semisimple Lie
subgroup. Suppose that either $W$ is a complex representation of $G$
which is irreducible for $H$ or that the complexification of $W$ is
irreducible for $H$.

Then denoting by $\frak g$ the Lie algebra of $G$, the representation
$V:=\frak{sl}(\frak g)$ of $G$ contains an element $v_0$, whose
stabilizer in $G$ is $H$.
\end{prop}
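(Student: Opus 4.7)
The strategy is to take $v_0\in\frak{sl}(\frak g)$ to be the traceless version of the $H$-equivariant projector onto $\frak h$. Because $\frak h$ is semisimple, its Killing form is non-degenerate, so the Killing form $B$ of $\frak g$ restricts non-degenerately to $\frak h$ and yields the $H$-invariant splitting $\frak g=\frak h\oplus\frak m$ with $\frak m:=\frak h^{\perp_B}$. Letting $P_{\frak h}$ denote the projection onto $\frak h$ along $\frak m$, I set
$$
v_0\;:=\;P_{\frak h}-\tfrac{\dim\frak h}{\dim\frak g}\,\id_{\frak g},
$$
which is traceless, hence lies in $\frak{sl}(\frak g)$, and is $H$-equivariant by construction; in particular $H\subset\operatorname{Stab}_G(v_0)$.

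For the reverse inclusion, $g\in\operatorname{Stab}_G(v_0)$ precisely when $\Ad(g)$ commutes with $v_0$; since $v_0$ has exactly the two eigenspaces $\frak h$ and $\frak m$, this amounts to $\Ad(g)\frak h=\frak h$, i.e.\ $g\in N_G(\frak h)$. It therefore suffices to prove $N_G(\frak h)=H$. Since every derivation of the semisimple algebra $\frak h$ is inner, the infinitesimal normalizer equals $\frak n_{\frak g}(\frak h)=\frak h+\frak z_{\frak g}(\frak h)$. Now Schur's lemma, applied under the irreducibility hypothesis on $W$ (over $\mathbb C$ in the first alternative, or after complexification in the second), implies that any element of $\End(W)$ commuting with $\frak h$ is a scalar; intersecting the scalars with $\frak g\subset\frak{sl}(W)$ yields $\frak z_{\frak g}(\frak h)=0$. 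Hence $N_G(\frak h)^{0}=H$.

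The main obstacle is then to handle the finite quotient $N_G(\frak h)/H$, which injects into the outer automorphism group of $\frak h$. For $g\in N_G(\frak h)$, conjugation by $g$ integrates to a Lie group automorphism $c_g$ of the connected group $H$, so $g\colon W\to W$ intertwines the original $H$-action with its $c_g$-twist, making $g$ a nonzero intertwiner of two irreducible $H$-modules. A second application of Schur, combined with the $G$-invariant structure on $W$ (a non-degenerate bilinear or sesquilinear form, respectively a complex or quaternionic structure, depending on which classical type $G$ has) forces $g\in H\cdot Z(G)$; since $Z(G)$ acts trivially on $\frak g$ via $\Ad$ and is therefore automatically contained in $\operatorname{Stab}_G(v_0)$, and since $Z(G)\subset H$ in every case covered by the hypothesis (a short verification within each classical family), we conclude $g\in H$. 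In the exceptional low-rank situations where an outer automorphism of $\frak h$ might otherwise survive, $v_0$ is refined by adding a second $H$-invariant summand supported on the $H$-isotypic decomposition of $\frak m$, which suffices to distinguish the remaining coset.
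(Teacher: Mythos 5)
Your construction of $v_0$ and your infinitesimal argument are essentially those of the paper: there $v_0$ acts as the identity on $\frak h$ and by a trace--normalizing scalar on an $\frak h$--invariant (hence, by connectedness, $H$--invariant) complement, and one shows that $A\cdot v_0=0$ forces $\ad(A)(\frak h)\subset\frak h$, uses that every derivation of the semisimple algebra $\frak h$ is inner to find $B\in\frak h$ with $\ad(A)|_{\frak h}=\ad(B)|_{\frak h}$, and then kills $A-B$ by Schur's lemma (complexifying if necessary) together with trace--freeness; this is exactly your computation $\frak n_{\frak g}(\frak h)=\frak h$, i.e.\ $N_G(\frak h)^0=H$. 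Two small remarks on your version: non--degeneracy of the restriction of the Killing form of $\frak g$ to $\frak h$ does not follow formally from non--degeneracy of the Killing form of $\frak h$ itself (it is true, but the simpler route, which the paper takes, is to obtain an invariant complement from complete reducibility); and the paper stops at this Lie--algebra computation, appealing to connectedness of $H$ for the identification of the stabilizer.

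The genuine problems are in your final paragraph, where you try to control all of $N_G(\frak h)$ rather than its identity component. The assertion that ``a second application of Schur, combined with the $G$--invariant structure on $W$, forces $g\in H\cdot Z(G)$'' is not an argument: this is precisely the point at which one must exclude that an outer automorphism of $\frak h$ is realized by conjugation inside $G$, and nothing you write rules this out. Worse, the claim that $Z(G)\subset H$ in every case covered by the hypotheses is false: $H=SO_0(1,3)\subset SL(4,\Bbb R)=G$ with $W=\Bbb R^4$ (whose complexification is irreducible for $\frak h$) satisfies all assumptions, yet $-\id\in Z(G)$ does not lie in the connected group $H$; the same happens for $SL(3,\Bbb R)\subset SL(6,\Bbb R)$ acting on $S^2\Bbb R^3$. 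Moreover, since the $G$--action on $\frak{sl}(\frak g)$ is $g\cdot v=\Ad(g)\circ v\circ\Ad(g)^{-1}$ and hence factors through $\Ad$, every element of $Z(G)$ stabilizes every element of $\frak{sl}(\frak g)$, so your closing suggestion of ``refining $v_0$ by further $H$--invariant summands'' can never remove such elements; that step cannot be repaired within this representation. In fairness, the paper's own proof also only establishes that the stabilizer has Lie algebra $\frak h$ and passes over the component issue with ``since $H$ is connected''; but your proposal explicitly raises the global question and then settles it with steps that are unsubstantiated or false, so as written it proves only the same infinitesimal statement as the paper, not the full claim that the stabilizer equals $H$.
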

\begin{proof}
Via the restriction of the adjoint representation of $G$, the Lie
algebra $\frak g$ is a representation of $H$, and of course $\frak
h\subset\frak g$ is an $H$--invariant subspace. Since $\frak h$ is
semisimple, there is an $\frak h$--invariant subspace $E\subset\frak
g$ which is complementary to $\frak h$. Since $H$ is connected, the
subspace $E$ is $H$--invariant. Now let $v_0\in L(\frak g,\frak g)$ be
the map which acts as the identity on $\frak h$ and by multiplication
by $-\tfrac{\dim(\frak h)}{\dim(W)}$ times the identity on $W$, where
the factor is chosen to ensure that $v_0$ is trace--free.

The natural action of $H$ on $V:=\frak{sl}(\frak g)$ is given by
$h\cdot v=\Ad(h)\o v\o \Ad(h)^{-1}$, which readily implies that $v_0$
is $H$--invariant. Since $H$ is connected, we can show that it
coincides with the stabilizer of $v_0$ in $G$ by proving that if
$A\in\frak g$ satisfies $0=A\cdot v_0$, then $A\in\frak h$. Now of
course $A\cdot v_0=\ad(A)\o v_0-v_0\o\ad(A)$, so $A\cdot v_0=0$ means
that $\ad(A)$ commutes with $v_0$. In particular, $\ad(A)$ must
preserve the eigenspaces of $v_0$, so $\ad(A)(\frak h)\subset\frak
h$. Now the restriction of $\ad(A)$ to $\frak h$ is a derivation by
the Jacobi--identity, and since $\frak h$ is semisimple, any such
derivation is inner. Hence there is an element $B\in\frak h$ such that
$\ad(A)|_{\frak h}$ coincides with $\ad(B)|_{\frak h}$. Hence
$A-B\in\frak g$ has the property that $[A-B,X]=0$ for any $X\in\frak
h$. This means that, as an endomorphism of $W$, $A-B$ commutes with
each element of $\frak h$. Passing to the complexification if
necessary, irreducibility implies that $A-B$ must be a multiple of the
identity. Since $\frak g$ is simple, our assumptions imply that it
consists of tracefree maps on $W$ respectively its complexification,
which implies $A=B$.
\end{proof}

\begin{remark}\label{rem2.5}
Evidently, some choice is involved in the construction used in the
proof of Proposition \ref{prop2.5}. One could actually use a finer
decomposition of the $H$--invariant complement $W$ to $\frak h$ in
$\frak g$, say into $\frak h$--isotypical components or into $\frak
h$--irreducibles, and then choose a map acting by different scalars on
the individual components. Any such choice works as long as $\frak
h\subset\frak g$ is one of the eigenspaces of $v_0$.

The disadvantage of the general construction in Proposition
\ref{prop2.5} is that the representation $\frak{sl}(\frak g)$ used
there is already fairly complicated. So while this characterization of
the subgroup $H$ is also available in the situations discussed in
Example \ref{ex2.3} it will be much more efficient to work with the
simpler characterizations available in these cases.  
\end{remark}

\subsection{Tractor bundles and the BGG machinery}\label{2.6} 
We next review the machinery for parabolic geometries we need, referring
to Chapter 3.2 of \cite{book} for details. Let $G$ be a semisimple Lie
group, $P\subset G$ a parabolic subgroup and let
$\frak p\subset\frak g$ be the Lie algebras. Then we can form the
\textit{reductive Levi--decomposition}
$\frak p=\frak g_0\oplus\frak p_+$ of $\frak p$ into a reductive Lie
subalgebra $\frak g_0\subset\frak p$ and a nilpotent ideal
$\frak p_+\subset\frak p$. Next, one defines a closed subgroup
$G_0\subset P$ as consisting of those elements whose adjoint actions
preserve this decomposition. It then turns out that the map
$(g_0,Z)\mapsto g_0\exp(Z)$ defines a diffeomorphism
$G_0\x\frak p_+\to P$, so in particular $P_+:=\exp(\frak p_+)$ is a
nilpotent normal subgroup in $P$ such that $P/P_+\cong G_0$.

This has consequences for the representation theory of $P$. The Lie
algebra $\frak p_+$ has to act trivially on any irreducible
representation of $P$. In particular, any completely reducible
representation of $P$ is obtained by trivially extending a completely
reducible representation of the reductive group $G_0$. (For $G_0$,
complete reducibility of a representation just means that the center
acts diagonalizably.) On the other hand, a general representation $V$
of $P$ inherits a $P$--invariant filtration of the form
$V=V^0\supset V^1\dots \supset V^N\supset\{0\}$, which is defined
recursively by $V^{i+1}=\frak p_+\cdot V^i\subset V^i$. Assuming that
the center of $G_0$ acts diagonalizably on $V$, each of the subsequent
quotients $V^i/V^{i+1}$ is a completely reducible representation of
$P$. In particular, $V/V^1$ is the canonical \textit{completely
  reducible quotient} of $V$.

Via forming associated bundles, representations of $P$ give rise to
natural vector bundles on manifolds endowed with a parabolic geometry
of type $(G,P)$. In the case of the homogeneous model, these are the
usual homogeneous vector bundles, so $V$ corresponds to the bundle
$\Cal V:=G\x_P V\to G/P$. An equivariant map between two
representations of $P$ induces a vector bundle map between the
corresponding bundles. In particular, the $P$--invariant filtration
$\{V^i\}$ of $V$ gives rise to a filtration of $\Cal V$ by smooth
subbundles $\Cal V^i\subset \Cal V$ such that each of the successive
quotients $\Cal V^i/\Cal V^{i+1}$ is a completely reducible bundle. Here we
say that a natural bundle is \textit{completely reducible} if it is
induced by a completely reducible representation of $P$.

Another important class of associated bundles are \textit{tractor
  bundles}, which correspond to representations of $P$ which are
obtained as restrictions of representations of $G$. An important
feature of a tractor bundle is that, on general parabolic geometries,
it carries a natural linear connection called the \textit{tractor
  connection}. On the homogeneous model, the situation is even easier,
see Section 1.5.7 of \cite{book}. Given a representation $V$ of $G$,
the corresponding bundle $\Cal V=G\x_P V$ admits a canonical
trivialization and the tractor connection is the flat connection
induced by this trivialization. In particular, any element $v\in V$
defines a section $s=s_v\in\Ga(\Cal V)$ which is parallel for the
tractor connection. In particular, we can apply this to $H$--invariant
elements as discussed in Section \ref{2.5}.

The machinery of BGG sequences, which was developed in \cite{CSS-BGG}
and \cite{Calderbank-Diemer}, relates parallel sections of a tractor
bundle $\Cal V$ to sections of the completely reducible quotient
$\Cal H_0:=\Cal V/\Cal V^1$ which satisfy a certain system of linear
partial differential equations. Again this works for general parabolic
geometries and assumes a particularly simple form on the homogeneous
model. We summarize what we need here: The projection $V\to V/V^1$
induces a natural bundle map $\Pi:\Cal V\to\Cal H_0$, which in turn
induces a tensorial operator $\Ga(\Cal V)\to\Ga(\Cal H_0)$ on the
spaces of sections that will also be denoted by $\Pi$. The key fact
for the BGG machinery is that there is a natural differential operator
$S:\Ga(\Cal H_0)\to\Ga(\Cal V)$ which splits this tensorial
projection, i.e.~we get $\Pi(S(\si))=\si$ for each
$\si\in\Ga(\Cal H_0)$. Apart from this splitting property, there is
only one more property needed to characterize the operator $S$, namely
that $\nabla S(\si)$ always has values in a certain natural subbundle
$\ker(\partial^*)\subset T^*(G/P)\otimes\Cal V$.

Now there is a natural completely reducible quotient bundle $\Cal H_1$
of $\ker(\partial^*)\subset T^*(G/P)\otimes\Cal V$ and we denote by
$\pi_H$ both the corresponding bundle projection and the induced
tensorial operator on sections.  Putting these ingredients together,
we can define a differential operator
$D:\Ga(\Cal V/\Cal V_1)\to\Ga(\Cal H_1)$ by
$D(\si):=\pi_H(\nabla S(\si))$. This is the \textit{first
  BGG--operator} determined by $\Cal V$. A crucial feature is that the
representation inducing $\Cal H_1$ can be determined by purely
algebraic methods from $V$. It is given as a Lie algebra homology
group which can be computed using Kostant's theorem, but the details on
this are not relevant for our purposes. The main point is that knowing
$V$, the bundle $\Cal H_1$ as well as the order and the principal part
of the first BGG operator can be determined algorithmically. We will
not go into details on how this is done, but just state the
corresponding results when we need them. Let us collect the
information we will need for the further development.

\begin{thm}\label{thm2.6}
  Let $G$ be a semisimple Lie group, $P\subset G$ be a parabolic
  subgroup and $G/P$ the corresponding generalized flag manifold. Let
  $V$ be a representation of $G$, $\Cal V\to G/P$ the corresponding
  tractor bundle, $\Cal H_0\to G/P$ the canonical completely reducible
  quotient of $\Cal V$ and $\Pi:\Cal V\to\Cal H_0$ the corresponding
  projection.

Then there is a computable completely reducible natural bundle $\Cal
H_1$ and an invariant differential operator $D:\Ga(\Cal H_0)\to
\Ga(\Cal H_1)$ such that for $s\in\Ga(\Cal V)$ the following are
equivalent 
\begin{itemize}
\item[(a)] $s$ is parallel for the tractor connection $\nabla^\Cal V$
  on $\Cal V$.
\item[(b)] For $\si:=\Pi(s)$ we have $D(\si)=0$ and $s=S(\si)$. 
\item[(c)] There is an element $v\in V$ such that $s$ corresponds to
  the constant function $v$ in the natural trivialization of $\Cal
  V$. 
\end{itemize}
\end{thm}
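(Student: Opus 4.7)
The plan is to run a short cyclic argument (c)$\Rightarrow$(a)$\Rightarrow$(b)$\Rightarrow$(c). The equivalence (a)$\Leftrightarrow$(c) is almost tautological on the homogeneous model: because the $P$--representation $V$ is the restriction of a $G$--representation, $\Cal V=G\x_P V$ carries the canonical trivialization $[g,w]\mapsto(gP,g\cdot w)$ identifying $\Cal V$ with $G/P\x V$, and under this identification the tractor connection $\nabla^\Cal V$ becomes the flat product connection. Parallel sections are thus exactly the constant $V$--valued functions, and (a)$\Leftrightarrow$(c) follows.

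For (a)$\Rightarrow$(b), set $\si:=\Pi(s)$. The splitting operator $S$ is characterized by the two properties $\Pi\o S=\id$ and $\nabla S(\tau)\in\Ga(\ker\partial^*)$ for all $\tau\in\Ga(\Cal H_0)$, where $\ker\partial^*\subset T^*(G/P)\otimes\Cal V$. Since $\Pi(s)=\si$ and $\nabla s=0$ trivially lies in $\Ga(\ker\partial^*)$, uniqueness of $S$ forces $s=S(\si)$. Then $D(\si)=\pi_H(\nabla S(\si))=\pi_H(0)=0$.

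For (b)$\Rightarrow$(c), the task is, given $\si\in\ker D$ and $s=S(\si)$, to exhibit $v\in V$ for which $s$ equals the constant section $s_v$ associated to $v$. The crucial external input is the BGG resolution theorem in the form developed in \cite{CSS-BGG, Calderbank-Diemer}: on the homogeneous model the BGG operators assemble into a fine resolution of the locally constant sheaf on $G/P$ associated to $V$, and in particular the map $V\to\ker D$ sending $v\mapsto\Pi(s_v)$ is surjective. Choosing $v\in V$ with $\Pi(s_v)=\si$, the already established implication (a)$\Rightarrow$(b), applied to the parallel section $s_v$, gives $s_v=S(\Pi(s_v))=S(\si)=s$, which is exactly (c).

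The only genuinely nontrivial step is the surjectivity appealed to in (b)$\Rightarrow$(c); everything else is a formal consequence of the characterizing properties of $\Pi$, $S$, and $D$ together with the flatness of $\nabla^\Cal V$ on $G/P$. An alternative route would bypass the explicit appeal to the resolution by proving (b)$\Rightarrow$(a) directly via an iterative analysis of $\nabla S(\si)$ inside $\ker\partial^*$, using the Hodge decomposition for the Kostant codifferential to successively eliminate the lower BGG slots; however, in the flat case the resolution statement yields the cleanest argument.
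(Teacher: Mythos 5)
Your argument is correct. Note, however, that the paper does not actually prove Theorem \ref{thm2.6}: it is stated as a summary of the BGG machinery and justified only by reference to \cite{CSS-BGG}, \cite{Calderbank-Diemer} and Sections 1.5.7 and 3.2 of \cite{book}, so your proposal should be compared with that cited machinery rather than with an in-paper argument. Measured this way, your cycle (c)$\Rightarrow$(a)$\Rightarrow$(b)$\Rightarrow$(c) is a clean and legitimate fleshing-out: (a)$\Leftrightarrow$(c) is exactly the canonical trivialization of $\Cal V$ on the homogeneous model with its flat tractor connection, and (a)$\Rightarrow$(b) uses precisely the uniqueness characterization of $S$ that the paper records (the unique section projecting to $\si$ whose covariant derivative lies in $\Ga(\ker\partial^*)$), so that $\nabla^{\Cal V}s=0$ forces $s=S(\si)$ and hence $D(\si)=\pi_H(0)=0$. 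The only genuinely nontrivial input, as you correctly isolate, is the surjectivity of $v\mapsto\Pi(s_v)$ onto $\ker D$, i.e.\ exactness of the flat BGG complex at the zeroth spot (equivalently, that on locally flat geometries every solution of the first BGG operator is ``normal''); this is exactly the content of the cited references, and the paper implicitly confirms it when it remarks that only in the curved case do solutions exist that are not induced by parallel tractors. Two minor points: the passage from the sheaf-level statement to the global one uses connectedness of $G/P$ (the resolution a priori identifies $\ker D$ with locally constant sections of the trivialized flat bundle, and constancy follows from connectedness); and your alternative route, proving (b)$\Rightarrow$(a) by showing that $\nabla S(\si)\in\Ga(\im(\partial^*))$ together with flatness forces $\nabla S(\si)=0$, is essentially the argument hidden inside the proof that the flat BGG sequence is a resolution, so it is a reorganization of, rather than an alternative to, the cited exactness.
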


In the situation of a parabolic compactification coming from an
embedding $H\hookrightarrow G$, we can apply this theorem to an
$H$--invariant element in some representation $V$ of $G$. This element
gives rise to a parallel section of the tractor bundle $\Cal V$
corresponding to $V$, which in turn projects to a solution of a first
BGG operator. As we shall see below such solutions can be used to
separate $H$--orbits in $G/P$ and thus understand the boundary
obtained by the compactification. Moreover, we can use such sections
to construct local coordinates on $G/P$ which are nicely adapted to
the decomposition into $H$--orbits, thus describing the local
structure of the $H$--space $G/P$. For these applications, we
crucially exploit that the parallel tractor captures information about
the jet of the underlying BGG solution in a nice way.

It should be remarked that Theorem \ref{thm2.6} partly generalizes to
curved geometries. In particular, a parallel section of a tractor
bundle always projects to a solution of the first BGG operator,
although in the curved case not all solutions are obtained in that
way. In any case, the version of Theorem \ref{thm2.6} for curved
geometries is sufficient to deal with curved analogs of parabolic
compactifications in the sense of Definition \ref{def-curved}.

\subsection{Recovering jet information}\label{2.6a}
The relation between a parallel tractor and the jet of the underlying
solution of a first BGG operator can be described in great detail, but
quite a bit of background is needed to formulate these results. Thus
we restrict to a very special case, which is sufficient to deal with
the examples discussed below. In particular, we only discuss the case
that the parabolic subgroup $P\subset G$ corresponds to a so--called
$|1|$--grading $\frak g=\frak g_{-1}\oplus\frak g_0\oplus\frak g_1$,
which equivalently means that the nilradical $\frak p_+$ of its Lie
algebra is abelian. This allows us to avoid the use of weighted jets
and of filtrations of the tangent bundle, and to formulate a uniform
result without having to distinguish cases. Here we work in the
setting of curved geometries, since restricting to the homogeneous
model does not provide any simplification. Thus we consider a
Cartan geometry $(p:\Cal G\to M,\om)$ of type $(G,P)$ which satisfies
the usual conditions of regularity and normality (cf. \cite{hol-red,book}).

Let $V$ be a representation of $G$ and let $\{V^i\}$ be the
$P$--invariant filtration of $V$. Then the definitions easily imply
that $\frak g_j\cdot V^i\subset V^{i+j}$ for $j=-1,0,1$. Hence for
$v\in V^i$ we can map $X\in\frak g_{-1}$ to $X\cdot v\in V^{i-1}$ and
the class of this modulo $V^i$ depends only on the class of $v$ modulo
$V^{i+1}$. Thus for each $i$, we obtain a well-defined linear map
$\partial:V^i/V^{i+1}\to \frak{g}_{-1}^*\otimes V^{i-1}/V^i$, which is easily
seen to be $P$--equivariant. Since $\frak g_{-1}\cong\frak g/\frak p$
and $TM=\Cal G\x_P(\frak g/\frak p)$, the natural bundle corresponding
to $\frak g_{-1}^*$ is the cotangent bundle. Hence denoting by
$\Cal V=\Cal G\x_P V$ the tractor bundle induced by $V$ and by
$\Cal V^i\subset \Cal V$ the subbundle corresponding to $V^i$, we get
natural bundle maps
$\partial: \Cal V^i/\Cal V^{i+1}\to T^*M\otimes \Cal V^{i-1}/\Cal V^i$
for each $i$. By definition $\partial$ is the zero map for $i=0$ and
it is well known that it is injective for $i>0$.

To formulate the result, we need some facts on Weyl structures for
parabolic geometries as discussed in Chapter 5 of \cite{book}. Weyl
structures generalize the choice of a metric in a conformal class (or,
more generally of a Weyl connection). They form a conceptual way to
describe parabolic geometries as an equivalence class of simpler
additional structures. Such structures always exist and choosing one
of them, one gets an induced linear connection (the \textit{Weyl
  connection}) on any natural bundle. Moreover, one also gets an
isomorphism from any natural bundle to its associated graded bundle (with
respect to the natural $P$--invariant filtration of the inducing
representation), called a \textit{splitting of the filtration}.

Suppose now that $s$ is a section of the tractor bundle $\Cal V\to M$
induced by a representation $V$ of $G$. Let us denote by $\{V^i\}$ the
$P$--invariant filtration of $V$ and by $\Cal V^i\subset\Cal V$ the
subbundle corresponding to $V^i$. Via the natural projection
$\Pi:\Cal V\to\Cal V/\Cal V^1$, the section $s$ canonically determines
a section $\si\in\Ga(\Cal V/\Cal V^1)$. Now choosing a Weyl structure
for the geometry in question, we obtain a corresponding Weyl
connection on $\Cal V$ and each of the subquotient bundles
$\Cal V^i/\Cal V^{i+1}$. On the other hand, the splitting of the
filtration defines an isomorphism
$\Cal V\to \oplus_{i\geq 0}\Cal V^i/\Cal V^{i+1}$. Under this
isomorphism, $s$ corresponds to a family of sections of the
subquotient bundles. This isomorphism has the property that it maps
each $\Cal V^j$ to $\oplus_{i\geq j}\Cal V^i/\Cal V^{i+1}$ and the
component in $\Cal V^j/\Cal V^{j+1}$ is given by the natural quotient
map. In particular, the component in $\Ga(\Cal V/\Cal V^1)$ of the
section corresponding to $s$ coincides with $\si$. Moreover, if for a
point $x\in M$, we have $\si(x)=0$, then $s(x)\in \Cal V^1_x$, so the
value of the component in $\Ga(\Cal V^1/\Cal V^2)$ at the point $x$ is
$s(x)+\Cal V^2_x$, and does not depend on the choice of Weyl structure.

\begin{prop}\label{prop2.6}
  Suppose that $P\subset G$ corresponds to a $|1|$--grading of
  $\frak g$. Consider a representation $V$ of $G$ endowed with its
  natural $P$--invariant filtration $\{V^i\}$. Let $M$ be a manifold
  endowed with a parabolic geometry of type $(G,P)$, let $\Cal V\to M$
  be the tractor bundle determined by $V$, and let
  $\Cal V^i\subset\Cal V$ be the smooth subbundle corresponding to
  $V^i\subset V$. Consider the natural bundle map
  $\partial:\Cal V^1/\Cal V^2\to T^*M\otimes\Cal V^0/\Cal V^1$ as
  defined above.

  Let $s\in\Ga(\Cal V)$ be a section that is parallel for the
  canonical tractor connection, and put
  $\si:=\Pi(s)\in\Ga(\Cal V/\Cal V^1)$. Choose a Weyl structure with
  Weyl connection $\nabla$, and let $\mu\in\Ga(\Cal V^2/\Cal V^1)$ be
  the component of the image of $s$ under the splitting of the
  filtration as described above. Then for any point $x\in M$, we have
  $\nabla\si(x)=-\partial(\mu(x))$.
 
  In particular, if $\si(x)=0$ (which implies that both $\nabla\si(x)$
  and $\mu(x)$ are independent of the choice of Weyl structure), we
  conclude that $\si$ has vanishing one--jet in $x$ if and only if
  $s(x)\in\Cal V^2_x\subset\Cal V_x$.
\end{prop}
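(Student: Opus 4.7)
The plan is to pick a Weyl structure and compute $\nabla^{\Cal V} s$ in the splitting of the filtration it provides, then extract the component of the resulting equation in $\Cal V^0/\Cal V^1$. The essential ingredient is the standard splitting formula for the tractor connection (Section 5.1 of \cite{book}): once we identify $\Cal V$ with the associated graded bundle $\bigoplus_{i\geq 0}\Cal V^i/\Cal V^{i+1}$ via the chosen Weyl structure, the tractor connection takes the form
\begin{equation*}
\nabla^{\Cal V}_\xi = \nabla_\xi + \th(\xi)\bullet + \Rho(\xi)\bullet,
\end{equation*}
where $\nabla$ is the Weyl connection (acting diagonally on subquotients), $\th$ is the soldering form (so $\th(\xi)\in\Ga(\Cal G_0\x_{G_0}\frak g_{-1})$ is tautologically $\xi$ under the identification $TM\cong \Cal G_0\x_{G_0}\frak g_{-1}$), $\Rho$ is the Rho tensor with values in $\Cal G_0\x_{G_0}\frak g_1$, and $\bullet$ denotes the algebraic actions of $\frak g_{\pm 1}$ on $V$ inherited from the $G$--action. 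The $|1|$--grading assumption is what makes this formula free of weighted--jet complications.

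Writing $s=\sum_{i\geq 0}s_i$ under the splitting, with $s_i\in\Ga(\Cal V^i/\Cal V^{i+1})$, we have $s_0=\si$ and $s_1=\mu$ by construction. Since $\frak g_{-1}$ lowers filtration degree by one and $\frak g_1$ raises it by one, the component of $\nabla^{\Cal V}_\xi s=0$ in $\Cal V^0/\Cal V^1$ reads
\begin{equation*}
0=\nabla_\xi\si+\th(\xi)\bullet s_1,
\end{equation*}
with no $\Rho$ contribution since there is no $\Cal V^{-1}/\Cal V^0$. By the very definition of $\partial$, together with the identification of $TM$ with $\Cal G_0\x_{G_0}\frak g_{-1}$, the term $\th(\xi)\bullet s_1$ is precisely $\partial(\mu)(\xi)$. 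Substituting $s_1=\mu$ gives the advertised identity $\nabla\si=-\partial\mu$.

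For the ``In particular'' clause, assume $\si(x)=0$, so $s(x)\in\Cal V^1_x$. As recalled in the paragraph preceding the proposition, any splitting of the filtration restricts to the identity on the associated graded, so $\mu(x)=s(x)+\Cal V^2_x$ is independent of the Weyl structure; the same is then true of $\nabla\si(x)=-\partial(\mu(x))$. The bundle map $\partial\colon\Cal V^1/\Cal V^2\to T^*M\otimes \Cal V^0/\Cal V^1$ is injective (this is the standard algebraic fact that $\frak g_{-1}\cdot(\cdot)\colon V^1/V^2\to \frak g_{-1}^*\otimes V^0/V^1$ is injective, as noted just before the statement), so $\nabla\si(x)=0$ holds if and only if $\mu(x)=0$, i.e.\ if and only if $s(x)\in\Cal V^2_x$. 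The only point where care is needed is the precise sign and normalization in the splitting formula for $\nabla^{\Cal V}$; once this is pinned down from Chapter 5 of \cite{book}, the rest is bookkeeping.
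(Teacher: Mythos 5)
Your argument is essentially the paper's own proof: both extract the $\Cal V^0/\Cal V^1$--component of the parallelism equation using the Weyl--structure description of the tractor connection (Proposition 5.1.10 of \cite{book}, which you write out as $\nabla+\th\bullet+\Rho\bullet$), obtaining $\nabla\si=-\partial(\mu)$, and then use injectivity of $\partial$ together with the Weyl--independence of $\mu(x)$ when $\si(x)=0$ for the jet statement. The grading bookkeeping (no $\Rho$--contribution in degree zero) and the sign are correct, so this is fine.
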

\begin{proof}
  A description of the tractor connection $\nabla^{\Cal V}$ under the
  isomorphism $\Ga(\Cal V)\cong\oplus_i\Ga(\Cal V^i/\Cal V^{i+1})$
  defined by the Weyl structure is given in Proposition 5.1.10 of
  \cite{book}. This readily shows that the component of
  $\nabla^{\Cal V}s$ in $T^*M\otimes\Cal V^0/\Cal V^1$ is given by
  $\nabla\si+\partial(\mu)$. If $s$ is parallel, this vanishes
  identically, which implies the first claim.

  We have already noted above that $\si(x)=0$ implies that $\mu(x)$ is
  independent of the Weyl structure chosen. General facts about linear
  connections (see Section \ref{2.7} below) imply that $\nabla\si(x)$
  is independent of $\nabla$. Vanishing one jet of $\si$ in $x$ is of
  course equivalent to $\nabla\si(x)=0$ and hence to
  $\partial(\mu(x))=0$. We have noted above that $\partial$ is
  injective, so this is equivalent to $\mu(x)=0$. Assuming $\si(x)=0$,
  $\mu(x)$ coincides with the projection of $s(x)\in\Cal V^1_x$ to
  $\Cal V^1_x/\Cal V^2_x$, which implies the last claim.
\end{proof}

\begin{remark}\label{rem2.6} 
  One can say more about the bundle map $\partial$ from Proposition \ref{prop2.6}
  depending on the order of the first BGG operator determined by $V$. This is based
  on the developments in \cite{BCEG} for $|1|$--graded geometries, which use a
  different splitting operator, but can be easily adapted to the setting of the BGG
  splitting operator. If the first operator has order bigger than one, then
  $\partial:\Cal V^1/\Cal V^2\to T^*M\otimes\Cal V/\Cal V^1$ is an isomorphism of
  natural vector bundles. If the first BGG operator has order 1, then one can
  naturally decompose $T^*M\otimes\Cal V/\Cal V^1$ into the direct sum of
  $\im(\partial)$ and of $\ker(\partial^*)$, where
  $\partial^*: T^*M\otimes\Cal V/\Cal V^1\to \Cal V^1/\Cal V^2$ is induced in the
  obvious way by the action of $\frak g_1$ on $V $. The first BGG--operator is then given by
  the $\ker(\partial^*)$--component of $\nabla\si$ (which is the same for all
  Weyl--connections $\nabla$). Hence for a section in the kernel of the first BGG
  operator, $\nabla\si$ has values in $\im(\partial)$. In general, this component
  depends on the choice of the Weyl connection, but along the zero--locus of $\si$,
  it has invariant meaning.

  The methods of \cite{BCEG} lead to more general results (still in the $|1|$--graded
  case). If the order of the first BGG operator is $r$, then for $k\leq r$ vanishing
  of the $k$--jet of $\si=\Pi(s)$ of the BGG solution determined by a parallel
  section $s$ of a tractor bundle $\Cal V$ in a point $x$ is equivalent to the fact
  that $s(x)\in \Cal V^{k+1}_x\subset\Cal V_x$.  Moreover, if $k<r$ then assuming
  vanishing $k$--jet in $x$, the $k+1$--fold symmetrized covariant derivative in $x$
  can be computed algebraically from the class of $s(x)$ in
  $\Cal V^{k+1}_x/\Cal V^k_x$. The results of \cite{BCEG} have been extended to
  general parabolic geometries in \cite{Neusser:Prolon}, but in this case weighted
  jets and a concept of weighted order are required, so we do not go into this.
\end{remark}

\subsection{Defining sections}\label{2.7} 
We next discuss a generalization of the well--known concept of a
defining function (or, more generally, a defining density) for a
hypersurface to an analogous notion for the case of submanifolds of
higher codimension. We will efficiently use the jet information on
first BGG solutions discussed in Section \ref{2.6} by using them to
construct defining sections.

For a vector bundle $p:E\to M$ on a smooth manifold $M$, it is well known that two
linear connections on $E$ differ by a tensor field. Explicitly, for linear connections
$\nabla$ and $\tilde\nabla$ on $E$ there is a smooth section
$A\in\Ga(T^*M\otimes L(E,E))$ such that for any vector field $\xi\in\frak X(M)$ and
any section $\si\in\Ga(E)$, we get $\tilde\nabla_\xi \si=\nabla_\xi
\si+A(\xi)(\si)$.
In particular, this shows that for a point $x\in M$ such that $\si(x)=0$, we obtain
$\tilde\nabla_\xi \si(x)=\nabla_\xi \si(x)$, so we obtain a well defined map
$\nabla \si(x):T_xM\to E_x$. (This corresponds to the fact that the kernel of the
natural projection $J^1E\to E$ from the first jet prolongation of $E$ to $E$ is
naturally isomorphic to $T^*M\otimes E$.) Thus the following is well defined.

\begin{definition}\label{def2.7}
  Let $N\subset M$ be a smooth submanifold of codimension $k$ in a
  smooth manifold $M$ and let $p:E\to M$ be a smooth vector bundle of
  rank $k$. Then a local section $\si$ of $E$ defined in a
  neighborhood $U$ of a point $x\in N$ is called a \textit{local
    defining section} for $N$ if and only if for each $y\in U\cap N$
  we have that $\si(y)=0$ and the linear map $\nabla \si(y):T_yM\to E_y$ is
  surjective.
\end{definition}

Similarly to defining functions, we can use defining sections to
produce local coordinates around the submanifold $N$. 

\begin{prop}\label{prop2.7}
Let $N\subset M$ be a smooth submanifold of codimension $k$, $p:E\to
M$ a smooth vector bundle of rank $k$ and $\si\in\Ga(E)$ a defining
section for $N$ defined locally around a point $x\in N$. Then for any
local frame $\{\tau_1,\dots,\tau_k\}$ for $E$ defined on a
neighborhood of $x$, there is an open neighborhood $U$ of $x$ in $M$
contained in the domain of definition of the frame and a surjective
submersion $\pi:U\to U\cap N$ such that the map $\ps:U\to (U\cap
N)\x\Bbb R^k$ defined by $\ps(y):=(\pi(y),\si_1(y),\dots,\si_k(y))$ is
a diffeomorphism onto an open neighborhood of $(U\cap N)\x\{0\}$. Here
the functions $\si_i:U\to\Bbb R$ are the coordinate functions of $\si$
with respect to the frame $\{\tau_i\}$,
i.e.~$\si(y)=\sum_i\si_i(y)\tau_i(y)$ for all $y\in U$.
\end{prop}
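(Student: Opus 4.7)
The plan is to convert the defining section into $k$ smooth functions via the frame, verify that these functions have a surjective Jacobian along $N$, and then appeal to the standard local form of submersions to produce $\pi$ and $\psi$.

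First, I would expand $\sigma = \sum_i \sigma_i \tau_i$ using the frame. For any point $y \in N \cap U$ we have $\sigma(y) = 0$ and hence $\sigma_i(y) = 0$ for all $i$. By the Leibniz rule for any linear connection $\nabla$ on $E$,
$$\nabla_\xi \sigma(y) = \sum_i \xi(\sigma_i)(y)\,\tau_i(y) + \sum_i \sigma_i(y)\,\nabla_\xi\tau_i(y) = \sum_i d\sigma_i(y)(\xi)\,\tau_i(y),$$
so the map $\nabla\sigma(y):T_yM \to E_y$ (which by the preamble to Definition \ref{def2.7} is independent of the choice of connection) is represented in the frame $\{\tau_i(y)\}$ by the tuple $(d\sigma_1(y),\dots,d\sigma_k(y))$. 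Surjectivity of $\nabla\sigma(y)$ is therefore equivalent to linear independence of $d\sigma_1(y),\dots,d\sigma_k(y)$ in $T_y^*M$. Consequently, the smooth map $f:=(\sigma_1,\dots,\sigma_k):U\to\Bbb R^k$ is a submersion at each point of $N\cap U$, and in particular at $x$.

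Next, I would shrink $U$ to a neighborhood of $x$ on which $f$ is a submersion everywhere and verify that $N\cap U=f^{-1}(0)\cap U$. The inclusion $\subset$ is immediate from $\sigma|_N=0$. Since $f$ is a submersion on $U$, the set $f^{-1}(0)\cap U$ is a smooth submanifold of codimension $k$; since $N$ has the same codimension, its (closed) intersection with a sufficiently small $U$ is a codimension-$k$ submanifold contained in $f^{-1}(0)\cap U$, so by equality of dimensions and a connectedness argument the two coincide near $x$.

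Now I would invoke the local normal form for submersions to produce the desired product chart. Choose a slice chart for $N\subset M$ at $x$, i.e.\ coordinates $(u_1,\dots,u_{n-k},v_1,\dots,v_k)$ on a neighborhood of $x$ such that $N$ is locally cut out by $v_1=\dots=v_k=0$, and so that $(u_1,\dots,u_{n-k})$ restrict to coordinates on $N$. Since $f|_{N\cap U}\equiv 0$ and $f$ is a submersion, the matrix $\bigl(\partial f_i/\partial v_j\bigr)$ is invertible at $x$; the inverse function theorem then shows that, after shrinking $U$, the map $(u_1,\dots,u_{n-k},\sigma_1,\dots,\sigma_k)$ is a diffeomorphism from $U$ onto an open neighborhood of $(u(U\cap N))\times\{0\}$ in $\Bbb R^{n-k}\times\Bbb R^k$. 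Define $\pi:U\to U\cap N$ by setting $v_i\circ \pi:=0$ and $u_j\circ\pi:=u_j$ in these coordinates; then $\pi$ is a smooth surjective submersion retracting $U$ onto $U\cap N$, and $\psi(y)=(\pi(y),\sigma_1(y),\dots,\sigma_k(y))$ becomes the composition of the above diffeomorphism with the obvious identification $u(U\cap N)\times\Bbb R^k\cong (U\cap N)\times\Bbb R^k$, which is therefore itself a diffeomorphism onto an open neighborhood of $(U\cap N)\times\{0\}$.

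The one genuinely nontrivial step is the first one: recognizing that at zeros of $\sigma$ the covariant derivative reduces to the differentials of the coordinate functions, so that the condition in Definition \ref{def2.7} translates directly into the submersion hypothesis on $f$. Once this is in hand, everything else is a routine application of the inverse/implicit function theorem in slice coordinates.
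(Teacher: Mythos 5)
Your proof is correct and follows essentially the same route as the paper: translate the defining-section condition into surjectivity of the differential of $(\si_1,\dots,\si_k)$ along $N$ (the paper does this with the connection making the frame parallel, you with an arbitrary connection plus the Leibniz rule, which is equivalent), take $\pi$ to be the projection in an adapted slice chart, and conclude via the inverse function theorem. The intermediate paragraph identifying $N$ with $f^{-1}(0)$ locally is harmless but not needed for the statement.
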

\begin{proof}
  Let us start with chart $u:U\to u(U)\subset\Bbb R^{n-k}\x\Bbb R^k$, adapted to the
  submanifold $N$, which is defined on an open neighborhood $U$ of $x$ in $M$ on
  which the frame $\{\tau_i\}$ is defined. Projection onto the first
  ($(n-k)$--dimensional) factor in that chart defines a surjective submersion
  $\pi:U\to U\cap N$. The chosen frame then defines a trivialization
  $\ph:p^{-1}(U)\to U\x\Bbb R^k$ of $E$ over $U$. Moreover, we can define a linear
  connection $\nabla$ on $E|_U$ by declaring the elements of the frame to be
  parallel. By definition, for a vector field $\xi$, we get
  $\nabla_\xi \si=\sum_i(\xi\cdot \si_i)\tau_i$, so since $\si$ is a defining
  section, we see that the function $(\si_1,\dots,\si_k):U\to \Bbb R^k$ has
  surjective derivative along $U\cap N$. Since $\pi$ is a surjective submersion, this
  shows that $\psi:U\to (U\cap N)\x\Bbb R^k$ has surjective derivative along
  $U\cap N\subset U$, so the result follows from the inverse function theorem.
\end{proof}

\section{Parabolic compactifications related to $SO(p,q)\subset
  SL(p+q,\Bbb R)$}\label{3}
In this section we use the tools we have developed to study parabolic
compactifications obtained from the realization of $SO(p,q)$ as the
fixed point group of an involutive automorphism of $SL(p+q,\Bbb R)$ as
discussed in part (1) of Example \ref{ex2.3}. In particular, this
includes a parabolic compactification of the Riemannian symmetric
space $SO(p,q)/S(O(p)\x O(q))$, which we analyze in detail. 

\subsection{Orbits and infinitesimal transversals}\label{3.1} 
We have already noted in part (1) of Example \ref{ex2.3} that $H:=SO(p,q)$ acts with
finitely many orbits on each flag manifold of $G:=SL(p+q,\Bbb R)$. There we have also
noted that on Grassmannians, the orbits are determined by the rank and signature of
the restriction of the symmetric bilinear form defining $H$ to the subspace in
question and open orbits correspond to non--degenerate restrictions. On the
Grassmannian $Gr(i,\Bbb R^{p+q})$ we can thus index the orbits as $\Cal O_{(r,s)}$
where $0\leq r\leq p$ and $0\leq s\leq q$ and $r+s\leq i$ and the open orbits are the
ones for which $r+s=i$. We also see immediately that the closure of $\Cal O_{(r,s)}$
is the union of the orbits $\Cal O_{(r',s')}$ where $r'\leq r$ and $s'\leq s$. We
next determine the structure and the codimension of each orbit and the form of the
infinitesimal transversal as discussed in Section \ref{2.4}.

\begin{prop}\label{prop3.1}
  For $r,s$ as above, define $\nu:=i-r-s$, $\hat r=p-r-\nu$, and $\hat s=q-s-\nu$,
  and consider the orbit $\Cal O:=\Cal O_{(r,s)}\subset Gr(i,\Bbb R^{p+q})$. Further
  let $IGr(\nu,\Bbb R^{(p,q)})$ be the Grassmannian of isotropic subspaces of
  dimension $\nu$ in $\Bbb R^{(p,q)}$.  Then we have:

  (1) The orbit $\Cal O$ is non--empty if and only if $\nu\leq\text{min}(p,q)$ in 
  that case, its codimension  in $Gr(i,\Bbb R^{p+q})$ is $\nu(\nu+1)/2$.

  (2) There is an $H$--equivariant surjective submersion 
  $\Cal O\to IGr(\nu,\Bbb R^{(p,q)})$ whose fibers are isomorphic to the symmetric
  space $SL(p+q-2\nu,\Bbb R)/S(O(r,s)\x O(\hat r,\hat s))$.

  (3) Replacing $H$ by a conjugate subgroup $H_{\Cal O}$ as described in Section
  \ref{2.4}, there is a natural quotient homomorphism
  $H_{\Cal O}\cap P\to GL(\nu,\Bbb R)\x S(O(r,s)\x O(\hat r,\hat s))$. Thus the
  representation $S^2\Bbb R^{\nu*}$ of $GL(\nu,\Bbb R)$ gives rise to a
  representation of $H_{\Cal O}\cap P$, which is isomorphic to
  $(\frak g/\frak p)/(\frak h_{\Cal O}/(\frak h_{\Cal O}\cap\frak p))$.
\end{prop}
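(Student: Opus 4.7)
The plan is to prove all three claims in tandem by realizing $\Cal O_{(r,s)}$ as a fibration over the isotropic Grassmannian $IGr(\nu,\Bbb R^{(p,q)})$. For (1), if $V_1\in\Cal O_{(r,s)}$ then the radical $R:=\{v\in V_1:\langle v,V_1\rangle=0\}$ of the restricted form is isotropic of dimension $\nu=i-r-s$ in $\Bbb R^{(p,q)}$, so its existence requires $\nu\leq\min(p,q)$ (the Witt index in signature $(p,q)$). Conversely, any isotropic $\nu$-plane $R$ together with a subspace $W\subset R^\perp$ on which $\langle\ ,\ \rangle$ restricts to a signature-$(r,s)$ form produces an element $V_1:=R\oplus W\in\Cal O_{(r,s)}$; such a $W$ is available exactly when the quotient $R^\perp/R$, which inherits a nondegenerate form of signature $(p-\nu,q-\nu)$, admits a signature-$(r,s)$ subspace, i.e.\ when $\hat r,\hat s\geq 0$.

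For (2), the assignment $V_1\mapsto R$ defines an $H$-equivariant map $\pi:\Cal O\to IGr(\nu,\Bbb R^{(p,q)})$. It is surjective by the converse construction above, and a submersion by $H$-equivariance combined with transitivity of $H$ on $IGr(\nu,\Bbb R^{(p,q)})$. The fiber over $R$ consists of all $V_1$ with $R\subset V_1\subset R^\perp$ whose image in the $(p-\nu,q-\nu)$-dimensional space $R^\perp/R$ has signature $(r,s)$; by Witt's theorem this is a single open $SO(R^\perp/R)$-orbit in $Gr(r+s,R^\perp/R)$ with isotropy $S(O(r,s)\x O(\hat r,\hat s))$, yielding the homogeneous space stated in (2). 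The codimension in (1) then follows by combining $\dim IGr(\nu,\Bbb R^{(p,q)})=\nu(p+q-\nu)-\nu(\nu+1)/2$ with the fiber dimension $(r+s)(\hat r+\hat s)$ and comparing to $\dim Gr(i,\Bbb R^{p+q})=i(p+q-i)$; the arithmetic identity $i(p+q-i)-\nu(p+q-\nu)=(r+s)(\hat r+\hat s)$, which uses $i=r+s+\nu$ and $r+\hat r+s+\hat s+2\nu=p+q$, yields codimension exactly $\nu(\nu+1)/2$.

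For (3), choose $H_{\Cal O}$ conjugate to $H$ so that $V_1=eP$ has radical $R$ and signature $(r,s)$. Then $H_{\Cal O}\cap P$ preserves the filtration $R\subset V_1\subset R^\perp\subset\Bbb R^{p+q}$, so in an adapted basis its elements are block upper-triangular with Levi quotient $GL(R)\x S(O(V_1/R)\x O(R^\perp/V_1))\cong GL(\nu,\Bbb R)\x S(O(r,s)\x O(\hat r,\hat s))$; projection to this Levi is the desired quotient homomorphism. To identify the normal representation, use $\frak g/\frak p\cong\text{Hom}(V_1,\Bbb R^{p+q}/V_1)$ and send $\phi$ to the symmetric bilinear form $(v,w)\mapsto\langle\tilde\phi(v),w\rangle+\langle v,\tilde\phi(w)\rangle$ restricted to $R\x R$, where $\tilde\phi:V_1\to\Bbb R^{p+q}$ is any lift of $\phi$; this is well defined on $R\x R$ since changes in the lift land in $V_1\subset R^\perp$ and thus pair trivially with $R$. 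The resulting map into $S^2R^*$ is surjective (realize any symmetric form via a $\phi$ with image in an isotropic complement $R^*\subset\Bbb R^{p+q}/V_1$), equivariant for $H_{\Cal O}\cap P$ through its $GL(\nu,\Bbb R)$-quotient, and annihilates $\frak h_{\Cal O}/(\frak h_{\Cal O}\cap\frak p)$ because elements of $\frak h_{\Cal O}$ preserve $\langle\ ,\ \rangle$ infinitesimally.

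The main obstacle is closing the last step: the inclusion $\frak h_{\Cal O}/(\frak h_{\Cal O}\cap\frak p)\subset\ker$ is automatic, but equality rests ultimately on matching dimensions via the codimension $\nu(\nu+1)/2=\dim S^2R^*$ from (1). An alternative that avoids this dimension-count trick is to build an explicit $H_{\Cal O}\cap P$-equivariant splitting using the decomposition $\Bbb R^{p+q}=R\oplus W\oplus U\oplus R^*$ associated with the filtration, but this requires more bookkeeping than the dimension comparison.
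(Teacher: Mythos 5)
Your argument is correct, and its second half takes a genuinely different route from the paper's. The first half coincides with the paper: non-emptiness via the radical $R=V_1\cap V_1^\perp$, the $H$-equivariant fibration $V_1\mapsto R$ over $IGr(\nu,\Bbb R^{(p,q)})$, and the fiber over $R$ identified by Witt's theorem with the open orbit of signature-$(r,s)$ subspaces in $R^\perp/R$ (your precise non-emptiness criterion $\hat r,\hat s\geq 0$ is in fact what the converse construction needs, and your fiber description is exactly the one the paper's own proof produces; the group in the statement should be the orthogonal group of the induced form on $R^\perp/R$). Where you diverge: the paper obtains the codimension and part (3) simultaneously from an adapted basis in which $b$ has an explicit block anti-diagonal matrix, computing $\frak h_{\Cal O}$ and $\frak h_{\Cal O}\cap\frak p$ in block form and reading off that the transversal is the space of symmetric $\nu\x\nu$ matrices with only the $GL(\nu,\Bbb R)$ part of the Levi acting nontrivially. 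You instead get the codimension from a dimension count ($\dim IGr(\nu,\Bbb R^{(p,q)})$ plus the fiber dimension $(r+s)(\hat r+\hat s)$ plus the identity $i(p+q-i)-\nu(p+q-\nu)=(r+s)(\hat r+\hat s)$, which checks out), and you identify the transversal invariantly via the map $\operatorname{Hom}(V_1,\Bbb R^{p+q}/V_1)\to S^2R^*$, $\phi\mapsto\bigl(\langle\tilde\phi(\cdot),\cdot\rangle+\langle\cdot,\tilde\phi(\cdot)\rangle\bigr)|_{R\x R}$, which is well defined, surjective, equivariant through the $GL(\nu,\Bbb R)$-quotient, and annihilates $\frak h_{\Cal O}$; the kernel is then pinned down by the codimension from (1). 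This basis-free map is attractive: it is precisely the infinitesimal version of $V\mapsto b|_{V\x V}$ and so prefigures the role of $\nabla\si_1$ in Theorem \ref{thm3.3}. The paper's matrix computation, in exchange, is self-contained (no external input on $\dim IGr$) and exhibits the explicit form of $\frak h_{\Cal O}$, the codimension, and the quotient homomorphism in one stroke.

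Two loose ends to tighten. First, the formula $\dim IGr(\nu,\Bbb R^{(p,q)})=\nu(p+q-\nu)-\nu(\nu+1)/2$ is itself the $(r,s)=(0,0)$ instance of part (1), so to avoid the appearance of circularity either cite it or derive it directly, e.g.\ from the parabolic $\frak q\subset\frak{so}(p,q)$ stabilizing an isotropic $\nu$-plane, for which $\frak{so}(p,q)/\frak q\cong\La^2\Bbb R^{\nu*}\oplus(\Bbb R^{\nu*}\otimes\Bbb R^{p+q-2\nu})$. Second, when you read off the Levi quotient of $H_{\Cal O}\cap P$, add a sentence on why the image is all of $GL(\nu,\Bbb R)\x S(O(r,s)\x O(\hat r,\hat s))$: the induced map on $\Bbb R^{p+q}/R^\perp$ is dual to the one on $R$, so those two determinant contributions cancel and the condition $\det=1$ only constrains the two orthogonal factors; surjectivity then follows because every block-diagonal map of this type preserves the standard form of $b$ in your adapted basis. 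Neither point is a real gap, but both deserve a line.
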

\begin{proof}
  Let $b$ be the bilinear form defining $H$ and let $W\subset\Bbb R^{p+q}$ be a
  subspace of dimension $i$ such that $b|_{W\x W}$ has signature $(r,s)$. Then the
  null space $W\cap W^\perp$ of $b|_{W\x W}$ has dimension $\nu$ and is totally
  isotropic for $b$. Since the maximal dimension of a totally isotropic subspace is
  $\text{min}(p,q)$ we conclude that $\Cal O_{(r,s)}$ is empty for
  $\nu>\text{min}(p,q)$. Otherwise, mapping $W$ to $W\cap W^\perp$ defines an
  $H$--equivariant map from $\Cal O_{(r,s)}$ to $IGr(\nu,\Bbb R^{(p,q)})$, which is
  easily seen to be smooth. Next, the sum $W+W^\perp$ has orthogonal space
  $W\cap W^\perp$ and thus is co--isotropic, so $b$ induces a duality between
  $W\cap W^\perp$ and $\Bbb R^{p+q}/(W+W^\perp)$. On the other hand, $b$ induces a
  non--degenerate bilinear form $\underline{b}$ on $(W+W^\perp)/(W\cap W^\perp)$
  which has signature $(p-\nu,q-\nu)$. By construction $W$ and $W^\perp$ descend to
  complementary subspaces in the quotient, on which the signature of $\underline{b}$
  is $(r,s)$ and $(\hat r,\hat s)$, respectively.

  Conversely, choosing a subspace $\Cal N\subset\Bbb R^{p+q}$ of dimension $\nu$
  which is totally isotropic for $b$, there is an induced bilinear form
  $\underline{b}$ on $\Cal N^\perp/\Cal N$ which is non--degenerate of signature
  $(p-\nu,q-\nu)$. Choosing a subspace of dimension $r+s$ in $\Cal N^\perp/\Cal N$,
  on which $\underline{b}$ has signature $(r,s)$, the pre--image in $\Cal N^\perp$ is
  a subspace of dimension $i$, which clearly lies in $\Cal O_{(r,s)}$. This shows
  that $\Cal O_{(r,s)}$ is non-empty if $\nu\leq\text{min}(p,q)$ and that our map
  $\Cal O_{(r,s)}\to IGr(\nu,\Bbb R^{(p,q)})$ is surjective. Since these are
  homogeneous spaces of $H$ and the map is $H$-equivariant, it must be the natural
  projection $H/K_1\to H/K_2$ for $K_1\subset K_2\subset H$, which is a
  submersion. The fiber of this map over $\Cal N$ is isomorphic to the space of those
  linear subspaces in $\Cal N^\perp/\Cal N\cong \Bbb R^{p+q-2\nu}$, on which
  $\underline{b}$ has signature $(r,s)$, which implies the claimed description as a
  symmetric space.

  Returning to a fixed subspace $W\in\Cal O_{(r,s)}$, the above considerations easily
  imply that we can find a basis $\{v_j\}$ for $\Bbb R^{p+q}$ which is adapted to the
  flag $W\cap W^\perp\subset W\subset W+W^\perp\subset\Bbb R^{p+q)}$ such that the
  matrix $(b(v_j,v_\ell))$ has the block form
  $\left(\begin{smallmatrix} 0 & 0 & 0 & \Bbb I\\ 0 & \Bbb I_{r,s}& 0 & 0\\ 0 & 0 &
      \Bbb I_{\hat r,\hat s} &0\\ \Bbb I& 0 & 0 & 0 \end{smallmatrix}\right)$.
  Here the blocks have sizes $\nu$, $r+s$, $\hat r+\hat s$, and $\nu$, $\Bbb I$
  denotes the identity matrix, and $\Bbb I_{r,s}$ is the diagonal matrix with $r$
  diagonal entries equal to $+1$ and $s$ entries equal to $-1$. (This also shows that
  any two subspaces for which the restriction of $b$ has signature $(r,s)$ are
  conjugate under the action of $H$.)

  Now take $H_{\Cal O}$ to be the orthogonal group corresponding to
  the above matrix. A simple direct computation then shows that, in a
  block form as above, the Lie algebra $\frak h_{\Cal O}$ consists of
  all matrices of the form
$$
\begin{pmatrix}
  A & K & L & M \\ E & B & -\Bbb I_{r,s}D^t\Bbb I_{\hat r,\hat s} &
  -\Bbb I_{r,s} K^t\\ F & D & C & -\Bbb I_{\hat r,\hat s}L^t \\ G &
  -E^t\Bbb I_{r,s} & -F^t\Bbb I_{\hat r,\hat s}& -A^t
\end{pmatrix}
\quad\text{\ with\ }\quad
\begin{matrix}
  B\in\frak o(r,s), C\in\frak
  o(\hat r,\hat s)\\
  G^t=-G, M^t=-M
\end{matrix}
$$
By definition $\frak h_{\Cal O}\cap\frak p$ corresponds to those matrices, for which
the four blocks in the lower left corner vanish. But these are exactly the matrices
which are block--upper triangular with respect to the finer block
decomposition. Hence $\frak h_{\Cal O}/(\frak h_{\Cal O}\cap\frak p)$ corresponds to
the four blocks in lower left corner, while $\frak g/\frak p$ is represented by
arbitrary matrices of size $(n-i)\x i$ in that part. But the only restriction on
these four blocks, implied by lying in $\frak h_{\Cal O}$, is that the $G$--block has
to be skew symmetric. As a vector space, the infinitesimal transversal
$(\frak g/\frak p)/(\frak h_{\Cal O}/(\frak h_{\Cal O}\cap\frak p))$ thus is
isomorphic to the space of symmetric matrices of size $\nu$. This shows that
$\Cal O_{(r,s)}$ has codimension $\nu(\nu+1)/2$ in $G/P$, which completes the proof
of parts (1) and (2).

Finally, the matrices which are strictly block--upper-triangular form an ideal in
$\frak h_{\Cal O}\cap\frak p$, with the quotient corresponding to the block diagonal
part. On the group level, this corresponds to the claimed quotient homomorphism in
(3). Explicitly, this sends a linear map that preserves each of the subspaces in the
chain $W\cap W^\perp\subset W\subset W+W^\perp$ to the induced maps on
$W\cap W^\perp$ and the quotients $W/(W\cap W^\perp)$ and $(W+W^\perp)/W$. Since
in the adjoint representation, the strictly block--upper--triangular matrices act
trivially on the $G$--block, we see that the natural action of $H_{\Cal O}\cap P$ on
the infinitesimal transversal descends to the quotient, with only the
$GL(\nu,\Bbb R)$--factor acting non--trivially. This completes the proof of (3).
\end{proof}

\begin{remark}\label{rem3.1}
  Let us briefly discuss the orbit structure for more general flag
  manifolds. Consider the case of two--step flags
  $W_1\subset W_2\subset\Bbb R^{(p,q)}$ of dimension $(i_1,i_2)$. The signatures
  $(r_1,s_1)$ and $(r_2,s_2)$ of the restrictions of $b$ to $W_1$ and $W_2$ satisfy
  $r_1\leq r_2$ and $s_1\leq s_2$ and these signatures are constant on
  $H$--orbits. Next, we have $W_2^\perp\subset W_1^\perp$ and the intersections
  $W_1\cap W_1^\perp$ and $W_2\cap W_2^\perp$ are the null spaces of the restrictions
  of $b$, which have dimension $\nu_j:=i_j-r_j-s_j$ for $j=1,2$. But at this point it
  is clear that, if both $\nu_1$ and $\nu_2$ are non--zero, then an additional
  invariant pops up: There is the natural subspace
  $W_1\cap W_2^\perp\subset W_1\cap W_1^\perp$ of vectors in the null space of
  $b|_{W_1\x W_1}$ which remain isotropic in $W_2$. The codimension $\ell$ of this
  subspace of course is preserved by the action of $H$. 

  There are evident restrictions on $\ell$. First, since
  $W_1\cap W_2^\perp\subset W_2\cap W_2^\perp$ we must have
  $\nu_1-\ell\leq\nu_2$. Second, vectors in
  $W_1\cap W_1^\perp\setminus W_1\cap W_2^\perp$ are isotropic but not orthogonal to
  $W_2$. Hence there is a subspace of dimension $2\ell$ in $W_2$, whose intersection
  with $W_1$ is complementary to $W_1\cap W_2^\perp$ in $W_1\cap W_1^\perp$ and on
  which $b$ has split signature $(\ell,\ell)$. This shows that $\ell\leq r_2-r_1$ and
  $\ell\leq s_2-s_1$. These are the only restrictions on $\ell$, however.

  Indeed it is an exercise in linear algebra to show that fixing $(r_j,s_j)$ for
  $j=1,2$ and $\ell$ such that the above restrictions are satisfied, one can find a
  basis for $\Bbb R^{p+q}$ adapted to $W_1$ and $W_2$ for which the inner product has
  a standard form. Hence these parameters completely determine the orbits. Moreover,
  there is a neighborhood of the orbit corresponding to $(r_j,s_j,\ell)$ on which the
  corresponding parameters satisfy $r_j'\geq r_j$, $s_j'\geq s_j$ and
  $\ell'\leq\ell$. Using this, one concludes that the closure of the orbit determined
  by $(r_j,s_j,\ell)$ consists of all orbits corresponding to $(r'_j,s'_j,\ell')$
  such that $r_j'\leq r_j$, $s_j'\leq s_j$ and $\ell'\geq\ell$.
\end{remark}

\subsection{Orbit closures via solutions of BGG operators}\label{3.2}
We continue the study of the orbits of $H=SO(p,q)$ in a Grassmannian
$G/P\cong Gr(i,\Bbb R^n)$, where $n=p+q$. We next describe certain unions of such
orbits as the zero sets of solutions of appropriate first BGG operators. In
particular, this provides a description of all orbit closures for the parabolic
compactification of $SO(p,q)/S(O(p)\x O(q))$ discussed in Section \ref{2.5} as zero
sets, see Remark \ref{rem3.4} below.

Recall the construction of the two tautological vector bundles $E$ and $F$ on the
Grassmannian $Gr(i,\Bbb R^n)$. These fit into an exact sequence of the form
$0\to E\to \Bbb R^n\to F\to 0$, where $\Bbb R^n$ indicates a trivial bundle. Viewing
a point in $Gr(i,\Bbb R^n)$ as a linear subspace $V\subset\Bbb R^n$, the fibers of
$E$ and $F$ over $V$ are $V$ and $\Bbb R^n/V$, respectively. By definition, this is
the standard tractor bundle $\Cal T$ for the flat Grassmannian structure with its
$P$--invariant filtration structure, in the trivialization described in Section
\ref{2.6}. In particular, the canonical completely reducible quotient of $\Cal T$ is
the anti--tautological bundle $F$. For our choice of parabolic subgroup $P$, the
reductive Levi--component $G_0$ is isomorphic to $S(GL(i,\Bbb R)\x GL(n-i,\Bbb R))$,
and the bundles $E$ and $F$ correspond to the standard representations of the two
factors. Thus we see that all the completely reducible natural bundles on
$Gr (i, \Bbb R^{p+q})$ can be built up from $E$, $F$, and their duals via tensorial
constructions.

At this point, we need a bit of background from representation theory. Viewing the
bilinear form $b$ we have used to define $H$ as a linear isomorphism
$\Bbb R^n\to \Bbb R^{n*}$, we can form the induced linear isomorphism
$\La^kb:\La^k\Bbb R^n\to \La^k\Bbb R^{n*}$. On the other hand, $b$ induces a
symmetric bilinear form $\tilde b$ on $\La^k\Bbb R^n$ defined by
$$
\tilde b(v_1\wedge\dots\wedge
v_k,w_1\wedge\dots\wedge w_k):=\det((b(v_i,w_j))_{i,j=1,\dots,k}),  
$$ 
which can be viewed as an element of $S^2(\La^k\Bbb R^{n*})$. Unless
$k=1$ or $k\geq n-1$, this is not an irreducible representation of
$SL(n,\Bbb R)$. There is however a maximal irreducible component,
which we denote by $\ocirc^2(\La^k\Bbb R^{n*})$, whose highest weight
equals twice the highest weight of the irreducible representation
$\La^k\Bbb R^{n*}$.

\begin{lemma}\label{lem3.2}
Identifying $L(\La^k\Bbb R^n,\La^k\Bbb R^{n*})$ with
$\otimes^2\La^k\Bbb R^{n*}$ the element $\La^kb$ coincides with
$\tilde b\in S^2(\La^k\Bbb R^{n*})$. Moreover, this element is
automatically contained in the irreducible component
$\ocirc^2(\La^k\Bbb R^{n*})$. 
\end{lemma}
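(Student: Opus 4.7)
Write $V:=\Bbb R^n$ for brevity. The first statement is a direct computation: viewing $b$ as a linear map $V\to V^*$, $\La^k b$ sends $v_1\wedge\cdots\wedge v_k$ to $b(v_1)\wedge\cdots\wedge b(v_k)\in\La^k V^*$. Under the canonical identification $L(\La^k V,\La^k V^*)\cong\La^k V^*\otimes\La^k V^*$ coming from $(\La^k V)^*\cong\La^k V^*$, pairing this element with $w_1\wedge\cdots\wedge w_k$ amounts to evaluating the wedge of covectors $b(v_1)\wedge\cdots\wedge b(v_k)$ on $(w_1,\dots,w_k)$, which by the standard formula equals $\det\bigl(b(v_i,w_j)\bigr)$. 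This is exactly $\tilde b$; its symmetry in the two arguments follows from symmetry of $b$ together with $\det(A)=\det(A^t)$, so $\tilde b\in S^2(\La^k V^*)$.

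For the second claim my plan is to diagonalize $b$ and reduce to a representation--theoretic fact about decomposables. By Sylvester's theorem there is a $b$--orthogonal basis $e_1,\dots,e_n$ of $V$ with dual basis $e^1,\dots,e^n$, so that $b=\sum_i\lambda_i\,e^i\otimes e^i$ with $\lambda_i\ne 0$; a direct computation then gives
\[
\La^k b\;=\;\sum_{i_1<\cdots<i_k}\lambda_{i_1}\cdots\lambda_{i_k}\,\omega_I\otimes\omega_I,\qquad \omega_I:=e^{i_1}\wedge\cdots\wedge e^{i_k}.
\]
Each $\omega_I$ is a decomposable element of $\La^k V^*$, so the plan reduces to showing that $\omega\otimes\omega\in\ocirc^2(\La^k V^*)$ for every decomposable $\omega\in\La^k V^*$; granting this, $\La^k b$ is a linear combination of such squares and therefore lies in $\ocirc^2(\La^k V^*)$.

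For this remaining step, I would use that the nonzero decomposable elements of $\La^k V^*$ form a single $GL(V)$--orbit and that each of them is a highest weight vector of $\La^k V^*$ for an appropriate Borel subgroup. For any such highest weight vector $\omega_0$ of weight $\lambda$, the tensor $\omega_0\otimes\omega_0\in S^2(\La^k V^*)$ has weight $2\lambda$ and is annihilated by all positive root vectors; hence it generates the irreducible submodule $V_{2\lambda}=\ocirc^2(\La^k V^*)$. Translating by $g\in GL(V)$ then gives $(g\omega_0)\otimes(g\omega_0)\in\ocirc^2(\La^k V^*)$ and so covers every decomposable $\omega$. This representation--theoretic step is the main point requiring care; as an alternative one could invoke the Cartan--Helgason theorem, noting that $\tilde b$ is $O(b)$--invariant and that, for the symmetric pair $(GL(V),O(b))$, the only irreducible summand of $S^2(\La^k V^*)$ admitting a nonzero $O(b)$--invariant vector is the one of highest weight $2\lambda$, namely the partition $(2^k)$ with all parts even.
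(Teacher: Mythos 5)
Your proof is correct, and for the second (and only nontrivial) claim it follows a genuinely different route from the paper. The paper keeps $b$ abstract: it exhibits $\La^kb$ as the image of $b\vee\dots\vee b$ under the $SL(n,\Bbb R)$--equivariant map $S^k(S^2\Bbb R^{n*})\to S^2(\La^k\Bbb R^{n*})$ and then compares highest weights, observing that the partition $(2^k)$ is the only component of $S^2(\La^k\Bbb R^{n*})$ that can occur in $S^k(S^2\Bbb R^{n*})$, so the image is exactly $\ocirc^2(\La^k\Bbb R^{n*})$. You instead diagonalize $b$ (Sylvester), write $\La^kb$ as a combination of squares $\om_I\otimes\om_I$ of decomposable $k$--covectors, and reduce to the classical fact that the square of any decomposable element lies in the Cartan component --- proved by noting that one such square is a weight--$2\la$ vector killed by the positive root vectors (multiplicity one of the weight $2\la$ identifies the submodule it generates with $\ocirc^2(\La^k\Bbb R^{n*})$), and then using $GL(V)$--invariance of the component together with transitivity of $GL(V)$ on nonzero decomposables; since $SL(n,\Bbb R)$ is split, the highest--weight formalism applies directly over $\Bbb R$. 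Your route is more hands--on and makes visible the link with the Pl\"ucker embedding (squares of pure $k$--vectors span the Cartan component), and it does not need any plethysm input; the paper's argument is shorter once one accepts the Young--diagram description of the components of $S^k(S^2\Bbb R^{n*})$ and avoids choosing a basis or invoking orbit transitivity. Your Cartan--Helgason alternative is also viable, but note that it silently uses the decomposition $S^2(\La^kV^*)=\oplus_{i\ge 0}S_{(2^{k-2i},1^{4i})}V^*$ (so that $(2^k)$ is the unique summand with all parts even), which is a combinatorial input of the same weight as the one the paper uses.
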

\begin{proof}
The map $\Bbb R^n\to\Bbb R^{n*}$ induced by $b$ sends $v$ to the
linear functional $b(v,\_)$. Hence the induced map on the $k$th
exterior powers sends $v_1\wedge\dots\wedge v_k$ to
$b(v_1,\_)\wedge\dots\wedge b(v_k,\_)$. Evaluating this functional on
$w_1\wedge\dots\wedge w_k$ we obtain $\tilde b(v_1\wedge\dots\wedge
v_k,w_1\wedge\dots\wedge w_k)$, which proves the first claim. 

For the second claim, we observe that there is a homomorphism of representations of
$SL(n,\Bbb R)$ mapping
$S^k(L(\Bbb R^n,\Bbb R^{n*}))\to L(\La^k\Bbb R^n,\La^k\Bbb R^{n*})$. For
$f_1,\dots,f_k\in L(\Bbb R^n,\Bbb R^{n*})$, the element
$f_1\vee\dots\vee f_k\in S^k(L(\Bbb R^n,\Bbb R^{n*}))$, defines a linear map
$\La^k\Bbb R^n\to \La^k\Bbb R^{n*}$ via
$$
(f_1\vee\dots\vee f_k)(v_1\wedge\dots\wedge
v_k):=\tfrac{1}{k!}\textstyle\sum_{\si\in\frak
  S(k)}f_{\si_1}(v_1)\wedge\dots\wedge f_{\si_k}(v_k).
$$ 
From this explicit formula, it follows readily this restricts to a homomorphism
$S^k(S^2\Bbb R^{n*})\to S^2(\La^k\Bbb R^{n*})$ and by construction $\La^kb$ lies in
the image of this homomorphism. In terms of Young diagrams, irreducible components of
$S^k(S^2\Bbb R^{n*})$ are obtained by arranging $k$ copies of a horizontal pair of
boxes into a Young diagram. This implies that the highest weight of
$\ocirc^2(\La^k\Bbb R^{n*})$ occurs among these weights (as a ``tower'' of $k$
horizontal pairs). This young diagram corresponds to the smallest among all the
highest weights of irreducible components of $S^k(S^2\Bbb R^{n*})$. Thus no other
irreducible component of $S^2(\La^k\Bbb R^{n*})$ can be contained in the image of our
homomorphism. Hence this image coincides with $\ocirc^2\La^k\Bbb R^{n*}$, which
completes the proof.
\end{proof}

As we have observed above, irreducible bundles on the Grassmannian
come from representations of $S(GL(i,\Bbb R)\x GL(n-i,\Bbb R))$ with
the tautological bundle $E$ corresponding to the standard
representation of the first factor. In particular, this implies that
we can form the bundles $\ocirc^2(\La^kE^*)$ for $k=1,\dots,i$ and for
$k=1,i-1,i$, this bundle coincides with $S^2(\La^kE^*)$.

\begin{thm}\label{thm3.2}
Consider the decomposition $Gr(i,\Bbb R^{p+q})=\cup_{r,s}\Cal
O_{(r,s)}$ into orbits of the group $H=SO(p,q)$. Then for each
$k=1,\dots,i$, there is a section $\si_k\in\Ga(\ocirc^2(\La^kE^*))$,
which lies in the kernel of the first BGG operator naturally defined
on that bundle, whose zero set is the union of all those $H$--orbits
$\Cal O_{(r,s)}$ for which $r+s<k$. The relevant first BGG operator is
of order one for $k<i$ and of order $3$ for $k=i$.
\end{thm}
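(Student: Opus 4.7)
My plan is to apply the BGG machinery of Theorem~\ref{thm2.6} to the $G$--representation $V_k:=\ocirc^2(\La^k\Bbb R^{n*})$. By Lemma~\ref{lem3.2} the element $\La^k b$, viewed as $\tilde b\in V_k$, is $H$--invariant since $b$ is, so Theorem~\ref{thm2.6} produces a parallel section $s_k$ of the tractor bundle $\Cal V_k\to G/P$ associated to $V_k$, and its projection $\si_k:=\Pi(s_k)$ into the completely reducible quotient lies in the kernel of the corresponding first BGG operator.

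The next step is to identify this completely reducible quotient as $\ocirc^2(\La^kE^*)$ and to compute $\si_k$ explicitly. The standard tractor bundle on the Grassmannian has completely reducible quotient $F$, so its dual has $E^*$, and the recursion $V^{j+1}=\frak p_+\cdot V^j$ implies that the induced filtration on $\La^k\Bbb R^{n*}$ has quotient $\La^kE^*$; restricting to the $G$--subrepresentation $\ocirc^2(\La^k\Bbb R^{n*})\subset S^2\La^k\Bbb R^{n*}$ picks out the irreducible $G_0$--subrepresentation of $S^2\La^kE^*$ of highest weight $2\om_k$, which is $\ocirc^2(\La^kE^*)$. In the canonical trivialisation of $\Cal V_k$, the parallel section $s_k$ is the constant $\tilde b$ (Theorem~\ref{thm2.6}(c)), and the projection at $W\in Gr(i,\Bbb R^n)$ is just restriction to $\La^kW$, so
$$
\si_k(W)=\tilde b|_{\La^kW\x\La^kW}.
$$

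To determine the zero set, choose a basis $\{e_1,\dots,e_i\}$ of $W$ that diagonalises $b|_{W\x W}$ with $r$ positive, $s$ negative, and $\nu=i-r-s$ null entries. The Gram--determinant definition of $\tilde b$ gives that $\tilde b(e_{j_1}\wedge\cdots\wedge e_{j_k},e_{\ell_1}\wedge\cdots\wedge e_{\ell_k})$ is non--zero exactly when the two index sets coincide and are contained in $\{1,\dots,r+s\}$, in which case the value is $\pm 1$. Hence $\si_k(W)=0$ iff no such $k$--subset exists, i.e.\ iff $r+s<k$, which identifies the zero set of $\si_k$ with $\bigcup_{r+s<k}\Cal O_{(r,s)}$ as claimed.

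The main remaining obstacle is the statement about the order of the first BGG operator. By the algorithmic description in Section~\ref{2.6}, the bundle $\Cal H_1^{(k)}$ that is the target of $D$ is determined by an irreducible component of the Lie algebra homology $H_1(\frak g_{-1},V_k)$, computed via Kostant's theorem, and the order of $D$ equals the length of the corresponding Hasse--diagram element. For $k<i$, $\ocirc^2(\La^kE^*)$ is a non--trivial tensor bundle and the relevant Weyl element is a single simple reflection, yielding a first--order operator. For $k=i$, however, the quotient $\ocirc^2(\La^iE^*)\cong(\det E)^{\otimes -2}$ is a density line bundle; a direct Kostant calculation with the affine Weyl action on the weight $2\om_i$ shows that the first Hasse element giving a non--trivial contribution to the cohomology has length three, producing the third--order operator. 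This mirrors the third--order BGG operators on densities familiar from projective geometry.
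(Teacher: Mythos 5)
Your construction of $\si_k$ and the identification of its zero locus are correct and essentially follow the paper's own route: the $H$--invariant element $\La^kb\in\ocirc^2(\La^k\Bbb R^{n*})$ (Lemma \ref{lem3.2}) gives a parallel tractor section, Theorem \ref{thm2.6} puts its projection $\si_k$ in the kernel of the first BGG operator, the completely reducible quotient is $\ocirc^2(\La^kE^*)$, and $\si_k(W)$ is the restriction of $\La^kb$ to $\La^kW$. Your Gram--matrix computation in a basis diagonalizing $b|_{W\x W}$ replaces the paper's functorial observation that $\La^k(\si_1(W))$ factors through $\La^k$ of the image of $\si_1(W):W\to W^*$; both say the same thing, namely that $\La^k$ of a symmetric form vanishes exactly when its rank is $<k$, so this difference is cosmetic.

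The genuine problem is your justification of the orders of the operators. The order of a first BGG operator is \emph{not} the length of a Hasse--diagram element: the first BGG operator always corresponds to a length--one element of $W^{\frak p}$ (for the Grassmannian, the single simple reflection at the crossed node $\alpha_i$), and $H_1$ is never computed from a length--three element, so the sentence ``the first Hasse element giving a non--trivial contribution to the cohomology has length three'' cannot be made correct. Likewise ``single simple reflection, hence first order'' is a false inference -- for the standard tractor bundle in conformal geometry the relevant element is also a single simple reflection, yet the operator is of second order. What actually determines the order is the weight, via the affine action of that one reflection: the order equals $\langle\lambda+\rho,\alpha_i^\vee\rangle$, where $\lambda=2\omega_k$ is the highest weight of the (dualized) inducing representation $\ocirc^2(\La^k\Bbb R^n)$, i.e.\ one plus the coefficient over the crossed node. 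Since $\langle 2\omega_k,\alpha_i^\vee\rangle$ is $0$ for $k<i$ and $2$ for $k=i$, one gets order $1$ respectively $3$. So your final numbers are right, but the mechanism you invoke would not produce them; this weight count (as in \cite{BCEG}, which is what the paper points to) is the argument you need to substitute for the last paragraph of your proposal.
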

\begin{proof}
  From the filtration structure of the standard tractor bundle $\Cal
  T$ described above, it readily follows that the dual bundle $\Cal
  T^*$ has $E^*$ as its canonical completely reducible quotient. Hence
  for $S^2\Cal T^*$, the completely reducible quotient is
  $S^2E^*$. Now $b\in S^2\Bbb R^{n*}$ corresponds to a parallel
  section $s=s_1$ of $S^2\Cal T^*$, which projects to
  $\si_1=\Pi(s_1)\in\Ga(S^2E^*)$. From the above description of the
  trivialization of $\Cal T$ it readily follows that the value of
  $\si$ in a point $V\in Gr(i,\Bbb R^{p+q})$ is simply given by the
  restriction of $b$ to the fiber of $E$ over $V$, which coincides with
  $V$. Hence we see that $V\in\Cal O_{(r,s)}$ if and only if
  $\si_1(V)$ has rank $r+s$ and signature $(r,s)$ as a bilinear
  form. On the other hand, $\si_1$ lies in the kernel of the first BGG
  operator by Theorem \ref{thm2.6}.

  Now for $k=2,\dots,i$ we can proceed similarly starting with
  $\La^kb$, which by Lemma \ref{lem3.2} lies in $\ocirc^2(\La^k\Bbb
  R^{n*})$. Hence it gives rise to a parallel section
  $s_k\in\Ga(\ocirc^2(\La^k\Cal T^*))$, which in turn projects onto a
  section $\si_k:=\Pi(s_k)$ of the irreducible quotient, which lies in
  the kernel of the corresponding first BGG operator. From the
  description of completely reducible quotients in terms of highest
  weights and since $k\leq i$, it readily follows that this quotient
  equals $\ocirc^2(\La^kE^*)$. Moreover, the projection $\Pi$ is again
  given by mapping a bilinear form to its restriction to the fibers of
  $\La^kE$. Hence we conclude that $\si_k(V)$ is the restriction of
  $\La^kb$ to $\La^kV$, so it coincides with $\La^k(\si_1(V))$, which
  by Lemma \ref{lem3.2} lies in $\ocirc^2(\La^kV^*)$.

  But now consider $\si_1(V)$ as a map $V\to V^*$ and let $W\subset V^*$ be its
  image. Then we can factorize our map as $V\to W\hookrightarrow V^*$ so by
  functoriality, $\La^k(\si_1(V))$ factorizes as
  $\La^k V\to \La^kW\hookrightarrow \La^kV^*$. This shows that
  $\La^k(\si_1(V))=\si_k(V)$ vanishes if and only if the rank of $\si_1(V)$ is less
  than $k$, i.e.~iff $V$ lies in an orbit $\Cal O_{(r,s)}$ such that $r+s<k$. The
  order of the relevant first BGG operators can be read off the highest weight of the
  inducing representations, compare with \cite{BCEG}.
\end{proof}

\subsection{A slice theorem}\label{3.3}
We next derive a description of a neighborhood of one of the orbits
$\Cal O_{(r,s)}\subset Gr(i,\Bbb R^{p+q})$. In view of the description of the
infinitesimal transversal in Proposition \ref{prop3.1}, it is visible what the best
possible result would be, c.f.~Remark \ref{rem2.4}. Putting $\nu=i-r-s$, the
infinitesimal transversal can be identified with $S^2\Bbb R^{\nu*}$, with the action
of the isotropy group coming from the natural representation of $GL(\nu,\Bbb R)$ on
that space. Hence the orbits of the isotropy group on the infinitesimal transversal
are determined by rank and signature, and following the philosophy of slice theorems,
the optimal result to expect would be a parallel description of a neighborhood of
$\Cal O_{(r,s)}$ in the Grassmannian. The following result is the key step to showing
that such a description is available locally.

\begin{thm}\label{thm3.3}
  Consider the section $\si_1\in\Ga(S^2E^*)$ from Theorem \ref{thm3.2}, a
  pair $(r,s)$ with $0\leq r\leq p$, $0\leq s\leq q$ and $r+s<i$, and a point
  $x\in\Cal O_{(r,s)}$. Then there are open neighborhoods $U$ of $x$ in $M$ and
  $\underline{U}\subset U\cap\Cal O_{(r,s)}$ of $x$ in $\Cal O_{(r,s)}$ and there is
  a smooth subbundle $\tilde E\subset E|_{U}$ of rank $\nu:=i-r-s$ such that
  \begin{itemize}
  \item for each $y\in U$ the null--space of $\si_1(y)$ is contained
    in $\tilde E_y\subset E_y$
  \item the obvious projection of $\si_1$ to a section of $S^2\tilde E^*|_U$ is a
    defining section for $\underline{U}$.
  \end{itemize}
\end{thm}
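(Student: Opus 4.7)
The plan is to construct $\tilde E$ as the $\si_1$-orthogonal of an appropriately chosen complementary subbundle, then check directly that the induced section $\tilde\si:=\si_1|_{\tilde E\otimes\tilde E}$ is a defining section using an explicit first-order calculation in an affine chart on the Grassmannian.

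For the construction, I would pick any subspace $F_x\subset E_x$ complementary to $\ker\si_1(x)$; then $\si_1(x)|_{F_x\times F_x}$ is non-degenerate of rank $i-\nu$. Extending $F_x$ to a smooth subbundle $F\subset E|_U$ of rank $i-\nu$ and shrinking $U$ to preserve the open condition of non-degeneracy of $\si_1(y)|_{F_y\times F_y}$, the bundle map $E|_U\to F^*|_U$ given by $v\mapsto\si_1(y)(v,\cdot)|_{F_y}$ becomes a smooth surjection of constant rank $i-\nu$. I then set
\[
\tilde E_y:=\ker\bigl(E_y\to F_y^*\bigr)=\{v\in E_y:\si_1(y)(v,F_y)=0\},
\]
which is a smooth subbundle of rank $\nu$. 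If $v\in\ker\si_1(y)$ then $v$ kills all of $E_y$, in particular $F_y$, so $v\in\tilde E_y$: this is the first listed property. On $\underline U:=\Cal O_{(r,s)}\cap U$ the null space attains its maximal dimension $\nu$, so $\ker\si_1(y)=\tilde E_y$ there, and $\tilde\si$ vanishes on $\underline U$.

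For the defining section property, I must show that $\nabla\tilde\si(y):T_yU\to(S^2\tilde E^*)_y$ is surjective at each $y\in\underline U$; since $\tilde\si(y)=0$ this value is connection-independent, and the dimension count $\dim S^2\tilde E^*_y=\nu(\nu+1)/2=\operatorname{codim}\Cal O_{(r,s)}$ from Proposition \ref{prop3.1} is consistent. I would work in the standard affine chart around $y=V$ that parameterizes nearby subspaces as $V_A=\{v+Av:v\in V\}$ for $A\in L(V,V^c)$, with $V^c$ a fixed complement of $V$ in $\Bbb R^{p+q}$. Since $\si_1(V_A)$ is just the restriction of $b$ to $V_A\times V_A$, a short computation for $n,m\in\tilde E_y=V\cap V^\perp$ yields
\[
(\nabla_{\dot A}\tilde\si)(y)(n,m)=b(\dot An,m)+b(n,\dot Am),
\]
the other derivative terms vanishing because $N:=V\cap V^\perp$ is $b$-orthogonal to all of $V$. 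Restricting attention to $\dot A|_N\in L(N,V^c)$, this map factors as post-composition with the pairing map $V^c\to N^*$, $v\mapsto b(v,\cdot)|_N$, followed by symmetrization. The first factor is surjective because $N\subset V^\perp$ pairs trivially with $V$ while $b$ is non-degenerate on $\Bbb R^{p+q}$, which forces $V^c$ to pair non-degenerately with $N$; symmetrization is trivially surjective onto $S^2N^*$. Hence $\nabla\tilde\si$ is surjective at every $y\in\underline U$, and $\tilde\si$ is a defining section in the sense of Definition \ref{def2.7}.

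I expect the main obstacle to be exactly this surjectivity step. Although the matching of $\dim S^2\tilde E^*_y$ with the codimension of $\Cal O_{(r,s)}$ and the identification of the infinitesimal transversal as $S^2\Bbb R^{\nu*}$ in Proposition \ref{prop3.1}(3) make the result morally transparent, the tractor/BGG machinery of Section \ref{2.6} does not directly yield it: the subbundle $\tilde E$ is not canonically associated with a $P$-subrepresentation, so Proposition \ref{prop2.6} cannot be applied to $\tilde\si$. The coordinate computation above -- or, equivalently, a matrix calculation in the block decomposition from the proof of Proposition \ref{prop3.1} -- appears unavoidable.
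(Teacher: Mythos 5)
Your proposal is correct, and the construction of $\tilde E$ (the $\si_1$--orthocomplement of a smoothly extended complement to the null space, on which $\si_1$ stays non--degenerate) is essentially the paper's construction; but your verification of the defining--section property takes a genuinely different route. The paper never computes in a chart: it chooses a connection adapted to the splitting $E|_U=\tilde E\oplus W$ to show that along the zero locus $\tilde\nabla_\xi(q\o\si_1)=q(\nabla_\xi\si_1)$, and then applies Proposition \ref{prop2.6} to the genuine BGG solution $\si_1=\Pi(s_1)$, identifying $\nabla\si_1(y)=-\partial(\mu(y))$ with $\partial(\mu)(f\otimes\la)=\mu(f)\vee\la$ and deducing surjectivity from non--degeneracy of the parallel tractor metric $s_1(y)$, which gives a surjection $F_y\to\tilde E_y^*$. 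Your affine--chart computation $(\nabla_{\dot A}\tilde\si)(y)(n,m)=b(\dot An,m)+b(n,\dot Am)$ for $n,m\in V\cap V^\perp$ is the same calculation in disguise (your pairing $V^c\to N^*$ is exactly the paper's $F_y\to\tilde E_y^*$), and your justification that the correction terms vanish because $N\perp_b V$ and that sections of $\tilde E$ may be replaced by naive graph extensions is sound since $\tilde\si(y)=0$. What your route buys is elementarity and self--containedness; what the paper's route buys is independence of any chart or trivialization and, more importantly, a formulation that survives on curved analogs of the compactification, where $\si_1(V)=b|_{V\x V}$ has no flat--model meaning but Proposition \ref{prop2.6} still applies. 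In this light your closing remark is slightly off: the BGG machinery is not blocked by the non--naturality of $\tilde E$, because one applies it to $\si_1$ rather than to $\tilde\si$ and transfers the information through the connection induced by the splitting --- exactly the bridge you thought was unavailable. Two small points of care: since $\Cal O_{(r,s)}$ is a priori only an initial submanifold, Definition \ref{def2.7} literally requires you to take $\underline{U}$ to be an embedded plaque (a connected open piece of the orbit that is a true submanifold), as the paper does, rather than all of $\Cal O_{(r,s)}\cap U$; and your symbol $F$ for the complementary subbundle clashes with the paper's anti--tautological bundle $F$, which plays a role in the paper's own argument, so it is worth renaming (the paper uses $W$).
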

\begin{proof}
  From Proposition \ref{prop3.1}, we know that $\Cal O_{(r,s)}$ has codimension
  $\nu(\nu+1)/2$ in the Grassmannian, so $\nu$ is the right rank for a subbundle
  $\tilde E$ to have a chance for a defining section of $S^2\tilde E^*$. Since
  $\Cal O_{(r,s)}$ is an initial submanifold in $G/P$, there is a connected open
  neighborhood $\underline{U}$ of $x$ in $\Cal O_{(r,s)}$, which is a true
  submanifold of $M$. (Take an adapted chart for the initial submanifold centered at
  $x$ and let $\underline{U}$ be the pre--image of the connected component of $x$ in
  the image of $\Cal O_{(r,s)}$ in that chart.)  Next, choose a linear subspace
  $W_x\subset E_x$ of dimension $r+s$ on which $\si_1(x)$ is non--degenerate and has
  signature $(r,s)$. This can be extended to a smooth subbundle $W\subset E$ on some
  connected open neighborhood $U$ of $x$ in $G/P$, which can be assumed to contain
  $\underline{U}$. Shrinking $U$ and $\underline{U}$ if necessary, we may assume that
  for each $y\in U$, the bilinear form $\si_1(y)$ is non--degenerate of signature
  $(r,s)$ on $W_y$. In particular, this implies that
  $U\subset \cup_{r'\geq r,s'\geq s}\Cal O_{(r',s')}$.

  Now for each $y\in U$, we define $\tilde E_y$ to be the space of all $v\in E_y$
  such that $\si_1(y)(v,w)=0$ for all $w\in W_y$. By non--degeneracy of $\si_1(y)$ on
  $W_y$, each of these spaces has dimension $\nu$ and, of course, it always contains
  the null space of $\si_1(y)$. We claim that, possibly shrinking $U$ and
  $\underline{U}$ further, there is a local smooth frame for $\tilde E$, so this is a
  smooth subbundle of $E|_U$. Indeed, take smooth sections
  $\tau_1,\dots,\tau_{r+s}\in\Ga(W)$ that form a local frame for $W$ and extend them
  by smooth sections $\tau_{r+s+1},\dots,\tau_i\in\Ga(E)$ to a local frame for
  $E$. Then non degeneracy of $\si_1$ on $W$ implies that for each $j=r+s+1,\dots,i$,
  one can add a smooth linear combination of $\tau_1,\dots,\tau_{r+s}$ to $\tau_j$ in
  such a way that the result has values in $\tilde E$.  Of course, the resulting
  sections then have to form a smooth local frame for $\tilde E$.

  Having constructed $\tilde E\subset E|_U$, there is a natural
  projection $q:S^2E^*|_U\to S^2\tilde E^*$, obtained by restricting
  bilinear forms to taking entries from the fibers of $\tilde E$. For
  $y\in \underline{U}\subset\Cal O_{(r,s)}$, we know that $\tilde E_y$ has
  dimension $\nu$ and contains the null--space of $\si_1(y)$, so it
  has to coincide with that null--space. Hence $q\o\si_1$ vanishes
  along $\underline{U}$ and we only have to verify that $\nabla (q\o
  \si)(y):T_y(G/P)\to S^2\tilde E_y^*$ is surjective for some
  connection $\nabla$ on $S^2\tilde E^*$ and each $y\in \underline{U}$ to complete
  the proof. 

  Suppose that $\nabla$ is a linear connection on $E$. Using the decomposition
  $E|_U=\tilde E\oplus W$, we obtain an induced linear connection $\tilde\nabla$ on
  $\tilde E$. For $v\in\Ga(\tilde E)$ and $\xi\in\frak X(U)$, we simply define
  $\tilde\nabla_\xi v$ as the $\tilde E$--component of $\nabla_\xi v$. Then consider
  the induced connections on $S^2E^*$ and $S^2\tilde E^*$, respectively, which we
  also denote by $\nabla$ and $\tilde\nabla$. For $v,w\in\Ga(\tilde E)$, we thus have
$$
(\tilde\nabla_\xi(q\o\si))(v,w)=\xi\cdot (q\o\si)(v,w)-(q\o\si)(\tilde\nabla_\xi
v,w)-(q\o\si)(v,\tilde\nabla_\xi w).
$$
By definition, $(q\o\si)(v,w)=\si(v,w)$ and, along $\underline{U}$, $\si$ vanishes
identically upon insertion of either $v$ or $w$ by construction. Hence we see that
for $y\in \underline{U}$ and each $\xi\in\frak X(U)$, we get
$\tilde\nabla_\xi (q\o\si)(y)=q(\nabla_\xi\si(y))$.

Taking $\nabla$ to be the Weyl connection defined by some Weyl structure, we can
compute $\nabla\si$ using Proposition \ref{prop2.6}. For the Grassmannian, the
tangent bundle $T(G/P)$ is isomorphic to $E^*\otimes F$, so
$T^*(G/P)\cong E\otimes F^*$. On the other hand, for $\Cal V=S^2\Cal T^*$, we have
$\Cal V/\Cal V^1\cong S^2E^*$ and $\Cal V^1/\Cal V^2\cong E^*\otimes F^*$. Hence in
our case the bundle map $\partial$ from Proposition \ref{prop2.6} maps
$F^*\otimes E^*$ to $F^*\otimes E\otimes S^2E^*$. By $P$--equivariancy, this must
coincide (up to a non--zero constant factor) with tensorizing with the
$\id_E\in E^*\otimes E$ and then symmetrizing. Viewing $\mu\in F^*\otimes E^*$ as a
linear map $F\to E^*$ and $\partial(\mu)$ as a map $F\otimes E^*\to S^2E^*$ we
conclude that $\partial(\mu)(f\otimes\lambda)=\mu(f)\vee\la$.

Now in a point $y\in\underline{U}$, we know that $\tilde E_y$ is the null--space of
$\si_1(y)$. Recall that $\si_1=\Pi(s_1)$ for a parallel section
$s_1\in\Ga(S^2\Cal T^*)$, so $\si_1(y)=s_1(y)|_{E_y\x E_y}$. Restricting $s_1(y)$ to
$\Cal T_y\times\tilde E_y$, the result vanishes upon insertion of an element of
$E_y$ in the first factor, so this factors to a map $F_y\x \tilde E_y\to\Bbb R$.
By construction, this map has to coincide with the restriction of
$\mu(y):F_y\x E_y\to\Bbb R$ to $F_y\x \tilde E_y$.  Non--degeneracy of $s_1(y)$ then
implies that this induces a surjection $F_y\to \tilde E_y^*$. But this implies that
the restriction of $q\o\partial(\mu):F_y\otimes E^*_y\to S^2\tilde E^*_y$ to
$F_y\otimes\tilde E^*_y$ is onto, which completes the proof.
\end{proof}

This easily leads to a full description of the local structure around
each of the orbits $\Cal O_{(r,s)}$. 
\begin{cor}\label{cor3.3}
Each of the orbits $\Cal O_{(r,s)}$ is an embedded submanifold of
$G/P$. Moreover, for each point $x\in\Cal O_{(r,s)}$, there is an open
neighborhood $U$ of $x$ in $G/P$ and a diffeomorphism $\ph$ from $U$
onto an open neighborhood of $(U\cap\Cal O_{(r,s)})\x\{0\}$ in
$(U\cap\Cal O_{(r,s)})\x S^2\Bbb R^{(n-r-s)*}$ such that
$U\subset\cup_{r'\geq r,s'\geq s}\Cal O_{(r',s')}$ and a point $y\in
U$ lies in $\Cal O_{(r',s')}$ if and only if the second component of
$\ph(y)$ has signature $(r'-r,s'-s)$.
\end{cor}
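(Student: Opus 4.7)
The plan is to apply Proposition \ref{prop2.7} to the defining section $q\o\si_1$ produced by Theorem \ref{thm3.3}, and then to repackage the resulting fiber coordinates as a symmetric bilinear form whose signature detects the orbit.

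\textbf{Embeddedness.} Theorem \ref{thm3.3} already constructs, around every $x\in\Cal O_{(r,s)}$, an open neighborhood $U$ together with a subset $\underline{U}\subset U\cap\Cal O_{(r,s)}$ which is a genuine (i.e.\ embedded) submanifold of $U$. Since embeddedness is a local property, this immediately implies that $\Cal O_{(r,s)}$ is an embedded submanifold of $G/P$ (and, locally at $x$, coincides with $\underline{U}$).

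\textbf{Construction of $\ph$.} Fix $x\in\Cal O_{(r,s)}$ and apply Theorem \ref{thm3.3} to obtain $U$, $\underline{U}$, and the rank $\nu=i-r-s$ subbundle $\tilde E\subset E|_U$ such that $q\o\si_1\in\Ga(S^2\tilde E^*|_U)$ is a defining section for $\underline{U}$. Choose a local frame $\tau_1,\dots,\tau_\nu$ for $\tilde E$ on $U$, which induces the dual frame $\tau_1^*,\dots,\tau_\nu^*$ of $\tilde E^*$ and hence the frame $\{\tau_i^*\vee\tau_j^*\}_{i\leq j}$ of $S^2\tilde E^*$. With respect to this frame, the value of any section of $S^2\tilde E^*$ at a point $y$ is naturally identified with an element of $S^2\Bbb R^{\nu*}$ (a symmetric $\nu\times\nu$ matrix). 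Proposition \ref{prop2.7} applied to $q\o\si_1$ with this frame then yields, after possibly shrinking $U$, a diffeomorphism $\ph$ from $U$ onto an open neighborhood of $(U\cap\Cal O_{(r,s)})\x\{0\}$ in $(U\cap\Cal O_{(r,s)})\x S^2\Bbb R^{\nu*}$, whose second component is $q\o\si_1$ read off in the chosen frame.

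\textbf{Signature identification.} The crucial observation is that, by the very construction of $\tilde E$ in the proof of Theorem \ref{thm3.3}, we have for every $y\in U$ an orthogonal (with respect to $\si_1(y)$) decomposition $E_y=W_y\oplus\tilde E_y$: the form $\si_1(y)$ vanishes identically on $W_y\x\tilde E_y$, its restriction to $W_y$ is non-degenerate of signature $(r,s)$, and its restriction to $\tilde E_y$ equals $q\o\si_1(y)$. Signature is additive on orthogonal direct sums, so $\si_1(y)$ has signature $(r,s)+\operatorname{sig}(q\o\si_1(y))$. Since $y\in\Cal O_{(r',s')}$ precisely when $\si_1(y)$ has signature $(r',s')$, this both forces $r'\geq r$ and $s'\geq s$ (so $U\subset\bigcup_{r'\geq r,\,s'\geq s}\Cal O_{(r',s')}$) and shows that $y\in\Cal O_{(r',s')}$ iff the second component of $\ph(y)$ has signature $(r'-r,s'-s)$.

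\textbf{Main obstacle.} The real work is already absorbed into Theorem \ref{thm3.3}: once one has a defining section in $S^2\tilde E^*$, the rest of the argument is essentially the inverse function theorem (Proposition \ref{prop2.7}) combined with the simple linear-algebra observation that $\si_1(y)$ block-diagonalizes orthogonally with respect to $W_y\oplus\tilde E_y$ at every $y\in U$. The only point requiring a bit of care is to choose the frame on $\tilde E$ (rather than directly on $S^2\tilde E^*$) so that the fiber coordinate naturally lands in $S^2\Bbb R^{\nu*}$ with its intrinsic signature function, which is what makes the orbit characterization come out cleanly.
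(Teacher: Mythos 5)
Your construction of $\ph$ and the signature identification are exactly the paper's argument: apply Proposition \ref{prop2.7} to the defining section $q\o\si_1$ of Theorem \ref{thm3.3} in a frame of $S^2\tilde E^*$ induced by a frame of $\tilde E$, and then use that $E_y=W_y\oplus\tilde E_y$ is $\si_1(y)$--orthogonal with $\si_1(y)|_{W_y}$ of signature $(r,s)$, so that signatures add. That part is fine.

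The gap is in your first paragraph, on embeddedness. Theorem \ref{thm3.3} only provides a set $\underline{U}$ that is open in $\Cal O_{(r,s)}$ for its \emph{intrinsic} (initial submanifold) topology and happens to be an embedded submanifold of $M$; it does not provide that $\underline{U}=U\cap\Cal O_{(r,s)}$ for some open $U\subset G/P$. Every injectively immersed submanifold has such locally embedded pieces (think of a dense line in a torus), so ``embeddedness is a local property'' does not apply to what you have: the local property needed is precisely that some $M$--open neighbourhood of $x$ meets the orbit only in $\underline{U}$, and your parenthetical ``locally at $x$, coincides with $\underline{U}$'' asserts exactly this without proof. The same unproved identification is silently used again when you claim that Proposition \ref{prop2.7} gives a diffeomorphism onto a neighbourhood of $(U\cap\Cal O_{(r,s)})\x\{0\}$; the proposition, applied with $N=\underline{U}$, only gives a neighbourhood of $\underline{U}\x\{0\}$. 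The paper closes this by deriving $\underline{U}=U\cap\Cal O_{(r,s)}$ \emph{from} the signature computation: a point $y\in U$ lies in $\Cal O_{(r,s)}$ iff $\si_1(y)|_{\tilde E_y}$ has signature $(0,0)$, i.e.\ iff $q\o\si_1(y)=0$, and by injectivity of the map $\ps$ from Proposition \ref{prop2.7} the zero locus of $q\o\si_1$ in $U$ is exactly $\underline{U}$; embeddedness of $\Cal O_{(r,s)}$ is then a consequence, not an input. Your proof has all the ingredients to make this argument, but as written the embeddedness claim is circular, and it should be moved to the end and deduced from the zero--locus identification rather than from Theorem \ref{thm3.3} alone.
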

\begin{proof}
  We take $\underline{U}\subset U$ and $q\o\si_1$ as in Theorem \ref{thm3.3} and then
  apply Proposition \ref{prop2.7} using a local frame of $S^2\tilde E^*$ determined
  by a local frame $\tau_1,\dots,\tau_\nu$ ($\nu=n-r-s$) for $\tilde E$. Possibly
  shrinking $U$ and $\underline{U}$, this gives a diffeomorphism from $U$ to an open
  neighborhood of $\underline{U}\x\{0\}$ in $\underline{U}\x S^2\Bbb R^{\nu*}$.
  Moreover, the coordinate functions of $q\o\si_1$ with respect to that frame are
  simply the functions $\si_1(\tau_a,\tau_b)$, so the rank and signature of the
  resulting symmetric matrix in a point $y$ coincides with the rank and signature of
  $\si_1(y)$ on $\tilde E_y$. By construction, this signature is $(p',q')$ if and
  only if the signature of $\si_1(y)$ on $E_y$ is $(p'+r,q'+s)$. This shows that
  $\underline{U}=U\cap\Cal O_{(r,s)}$, so in particular, $\Cal O_{(r,s)}$ is an
  embedded submanifold, and the characterization of $U\cap\Cal O_{(r',s')}$ follows
  easily.
\end{proof}

\subsection{A natural defining density for the largest boundary
  component}\label{3.4} 

Consider an orbit $\Cal O_{(r,s)}\subset Gr(i,\Bbb R^n)$ with
$r+s=i-1$.  Theorem \ref{thm3.3} produces (locally) a section of
$S^2\tilde E^*$ that is a local defining section for $\Cal
O_{(r,s)}$. But in this case $\tilde E$ is a line bundle and therefore
so is $S^2\tilde E^*$. Thus we obtain an analog of a defining density
as discussed on p.~52 of \cite{Proj-comp}. As a final step in the
discussion of the parabolic compactifications related to
$SO(p,q)\subset SL(p+q,\Bbb R)$, we show that, for these largest
non--open orbits, we also get a  defining density that is natural. In Theorem
\ref{thm3.2}, we have constructed a section $\si_i$ of the bundle
$\ocirc^2(\La^iE^*)$. Now since $E$ has rank $i$, $\La^iE^*$ is a line
bundle, so $\ocirc^2(\La^iE^*)=S^2(\La^iE^*)$ is a line bundle,
too. Since we are dealing with a $|1|$--graded geometry here, any
natural line bundle is a density bundle.

\begin{prop}\label{prop3.4}
  The section $\si_i$ of the density bundle $L:=S^2(\La^iE^*)$ is a
  defining density for each of the orbits $\Cal O_{(r,s)}\subset
  Gr_i(\Bbb R^{p+q})$ with $r+s=i-1$.  
\end{prop}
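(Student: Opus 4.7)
The plan is to interpret $\si_i$ as the determinant of $\si_1$, viewed as a bundle map $E\to E^*$, and to compare it to the local defining section $q\o\si_1$ of $S^2\tilde E^*$ produced by Theorem \ref{thm3.3}. Since $r+s=i-1$ forces $\nu=1$, the orbit $\Cal O_{(r,s)}$ has codimension $\nu(\nu+1)/2=1$ by Proposition \ref{prop3.1}, matching the rank of the line bundle $L=S^2(\La^iE^*)$. Thus at each point $x\in\Cal O_{(r,s)}$ it suffices to verify (a) $\si_i(x)=0$, which is immediate from Theorem \ref{thm3.2}, and (b) that $\nabla\si_i(x)\colon T_x(G/P)\to L_x$ is non-zero.

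For (b), recall from the proof of Theorem \ref{thm3.2} that $\si_i(V)=\La^i\si_1(V)$ as an element of $\operatorname{Hom}(\La^iE_V,\La^iE_V^*)=L_V$, so $\si_i$ is literally the ``determinant'' of $\si_1\colon E\to E^*$. Apply Theorem \ref{thm3.3} at $x$ to obtain a neighborhood $U$, a rank-one subbundle $\tilde E\subset E|_U$, and the complementary rank-$(i-1)$ subbundle $\tilde W$ from its proof, on which $\si_1$ is non-degenerate, with $E|_U=\tilde E\oplus\tilde W$. Write $\si_1$ in the corresponding block form
$$
\si_1=\begin{pmatrix}\al&\be\\\be^\top&\ga\end{pmatrix},\qquad \al=q\o\si_1\in\Ga(S^2\tilde E^*),\quad \ga\in\Ga(S^2\tilde W^*)\text{ invertible on }U.
$$
By the construction of $\tilde E$, both $\al$ and $\be$ vanish identically along $U\cap\Cal O_{(r,s)}$, since there $\tilde E$ is precisely the null space of $\si_1$.

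Using the Schur complement identity (valid where $\ga$ is invertible), together with the natural isomorphism $L=(\tilde E^*)^{\otimes2}\otimes(\La^{i-1}\tilde W^*)^{\otimes2}$ induced by $\La^iE=\tilde E\otimes\La^{i-1}\tilde W$, one obtains
$$
\si_i=\det(\ga)\cdot\bigl(\al-\be\ga^{-1}\be^\top\bigr).
$$
Differentiating at $y\in U\cap\Cal O_{(r,s)}$ via the Leibniz rule, and using $\al(y)=0$ and $\be(y)=0$, the contributions from $\nabla\det(\ga)$ and from every term containing a factor of $\be$ or $\be^\top$ vanish, leaving
$$
\nabla\si_i(y)=\det(\ga(y))\cdot\nabla\al(y).
$$
Since $\ga(y)$ is non-degenerate, $\det(\ga(y))$ is a non-zero element of the line $(\La^{i-1}\tilde W^*)^{\otimes2}_y$, and by Theorem \ref{thm3.3} the map $\nabla\al(y)\colon T_y(G/P)\to(\tilde E^*)^{\otimes2}_y$ is surjective; tensoring by a non-zero element of the complementary line factor preserves surjectivity, so $\nabla\si_i(y)$ surjects onto $L_y$, as required.

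The main obstacle is making the Schur-complement identity precise in the vector bundle setting: one has to verify that the identification $\si_i=\La^i\si_1$ is compatible with the splitting $\La^iE=\tilde E\otimes\La^{i-1}\tilde W$ of the line bundle, and to argue that the Leibniz computation is invariantly meaningful even though intermediate quantities (such as $\ga^{-1}$ and $\nabla\det(\ga)$) depend on a choice of connection. This is ultimately justified by the observation preceding Definition \ref{def2.7}, namely that once a section of a vector bundle vanishes at a point, its covariant derivative there becomes connection-independent, so the only surviving term $\det(\ga)\cdot\nabla\al$ along the orbit is well-defined.
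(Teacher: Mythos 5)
Your argument is correct, but it follows a genuinely different route from the paper's proof. The paper does not invoke Theorem \ref{thm3.3} at all: it works directly with the parallel tractor $s_i\in\Ga(\ocirc^2(\La^i\Cal T^*))$ and Proposition \ref{prop2.6}, showing that simultaneous vanishing of $\si_i(x)$ and $\nabla\si_i(x)$ would force $s_i(x)$ into the second filtration component, then comparing the filtration of $\ocirc^2(\La^i\Cal T^*)$ with that of $S^2(\La^i\Cal T^*)$ to conclude that this would make the rank of $s_1(x)|_{E_x}$ drop below $i-1$ --- impossible on an orbit with $r+s=i-1$. You instead treat Proposition \ref{prop3.4} as a consequence of the slice theorem: using the splitting $E|_U=\tilde E\oplus W$ from the proof of Theorem \ref{thm3.3} you factor $\si_i=\La^i\si_1$ through the line--bundle identification $L\cong S^2\tilde E^*\otimes(\La^{i-1}W^*)^{\otimes 2}$ and reduce nonvanishing of $\nabla\si_i$ along the orbit to the defining--section property of $q\o\si_1$, which Theorem \ref{thm3.3} already provides. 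Both arguments are sound; the paper's is choice--free and purely representation--theoretic (and in particular independent of the local subbundle constructions), while yours is more elementary Leibniz--rule linear algebra but inherits the tractor input indirectly through Theorem \ref{thm3.3}, with the local choices rendered harmless by the connection--independence of $\nabla\si_i$ at zeros of $\si_i$. One small simplification you missed: by the very construction of $\tilde E$ as the $\si_1$--orthocomplement of $W$ in $E$, the off--diagonal block $\be$ vanishes identically on all of $U$, not merely along the orbit, so the Schur--complement correction term $\be\ga^{-1}\be^\top$ is actually zero and the identity reads simply $\si_i=\det(\ga)\cdot\al$; your more general computation is nevertheless valid, since vanishing of $\be$ along the orbit is all the Leibniz argument needs.
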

\begin{proof}
  We already know that $r+s=i-1$ implies that $\Cal O_{(r,s)}$ is an
  embedded hypersurface in the Grassmannian, and by Theorem
  \ref{thm3.2}, $\si_i$ vanishes along $\Cal O_{(r,s)}$. Thus it
  remains to verify that $\nabla\si_i$ is non--vanishing along $\Cal
  O_{(r,s)}$. To see this, we have to analyze the canonical
  $P$--invariant filtration $\{V^i\}$ of the representation
  $V:=\ocirc^2(\La^i\Bbb R^{n*})$ of $G$ respectively the filtration
  $\{\Cal V^i\}$ of the corresponding tractor bundle $\Cal V$. We
  already know that $\Cal V/\Cal V^1\cong L$ which was used to obtain
  $\si_i$ from the parallel section $s_i\in\Ga(\Cal V)$. Now
  Proposition \ref{prop2.6} shows that for a point $x\in G/P$
  simultaneous vanishing of $\si_i(x)$ and $\nabla\si_i(x)$ are
  equivalent to the fact that $s_i(X)\in\Cal V^2_x$. We have also
  observed that the first BGG--operator in our case has order $3$,
  which by Remark \ref{rem2.6} implies that $\Cal V^1/\Cal V^2\cong
  T^*(G/P)\otimes L$.

  On the other hand, consider the natural filtration of the tractor bundle
  $\Cal W:=\La^i\Cal T^*$. This is induced by inserting elements of the subbundle $E$
  into multilinear maps. In particular, $\Cal W^1$ consists of those maps which
  vanish under insertion of $i$ elements in $E$, which explains the isomorphism
  $\Cal W/\Cal W^1\cong\La^i E^*=L$. Likewise, $\Cal W^2\subset\Cal W^1$ consists of
  maps which vanish upon insertion of $i-1$ elements of $E$, so
  $\Cal W^1/\Cal W^2\cong F^*\otimes\La^{i-1}E^*$ and this has rank $(n-i)i$. Hence
  if we look at the natural filtration of $S^2\Cal W^*$, the iterated quotients of
  filtration components in the first two steps are given by $S^2L$ and
  $L\otimes F^*\otimes \La^{i-1}E^*$, respectively. It is easy to see that the latter
  bundle is isomorphic to $T^*(G/P)\otimes L$.

  Comparing the statements of the last two paragraphs, we conclude that the subbundle
  $\Cal V\subset S^2\Cal W$ has the property that $\Cal V^1/\Cal V^2$ surjects onto
  $(S^2\Cal W)^1/(S^2\Cal W)^2$. Otherwise put, if at some point $x$ we have
  $\si_i(x)=0$ and $\nabla\si_i(x)=0$, then, viewed as a map
  $\La^i\Cal T_x\to\La^i\Cal T^*_x$, $s_i(x)$ has the property that applying it to a
  wedge products of $i-1$ Elements of $E_x$ and one element of $F_x$, the result
  vanishes upon insertion of $i$ elements of $E_x$. But this implies that the
  restriction of $s_1(x)$ to $E_x$ has rank less than $i-1$. Indeed, if this rank is
  at least $i-1$, then we can choose a basis $\{e_1,\dots,e_i\}$ for $E_x$ such that
  $s_1(x)(e_i,e_j)$ equals $0$ if $i=1$ or $i\neq j$ and $1$ for
  $i=j>1$. Non-degeneracy of $s_1(x)$ then shows that there must be an element
  $f\in F_x$ such that $s_1(x)(e_1,f)\neq 0$. But then by construction
  $s_i(x)(f\wedge e_2\wedge \dots\wedge e_p,e_1\wedge\dots\wedge e_p)\neq 0$.
\end{proof}

\begin{remark}\label{rem3.4} 
  Let us specialize the results of this Section to the case of the Riemannian
  symmetric space $H/K:=SO(p,q)/S(O(p)\x O(q))$, which we know can be identified with
  the open orbit $\Cal O_p:=\Cal O_{(p,0)}$ in $Gr(p,\Bbb R^{p+q})$. The closure
  $\overline{H/K}$ can be written as $\cup_{j\leq p}\Cal O_j$, where we briefly write
  $\Cal O_j$ for $\Cal O_{(j,0)}$. From Proposition \ref{prop3.1}, we know that
  $\Cal O_j$ has codimension $(p-j)(p-j+1)/2$, and for each $j$ we get
  $\overline{\Cal O_j}=\cup_{i\leq j}\Cal O_i$. Thus Theorem \ref{thm3.2} shows that
  the orbit closure $\overline{\Cal O_j}$ coincides with the intersection of the zero
  locus of $\si_{p-j+1}$ with $\overline{H/K}$. In particular, the zero locus of
  $\si_p$ coincides with the closure of $\Cal O_{p-1}$, and by Proposition
  \ref{prop3.4}, $\si_p$ is a defining density locally around each point of that
  orbit.

  Also, the slice theorem given in Corollary \ref{cor3.3} takes a particularly nice
  form for this example. For each $\nu$, we can consider the space
  $S^2_{>0}\Bbb R^{\nu*}$ of positive definite symmetric $\nu\x \nu$--matrices over
  $\Bbb R$, which can be identified with the symmetric space $GL(\nu,\Bbb
  R)/O(\nu)$.
  The closure of $S^2_{>0}\Bbb R^{\nu*}$ in the space of all symmetric matrices is
  the space $S^2_{\geq 0}\Bbb R^{\nu*}$ of positive semi--definite matrices. This can
  be viewed as a ``local compactification'' of $GL(\nu,\Bbb R)/O(\nu)$ in the sense
  that for any compact neighborhood $W$ of $0$ in $S^2\Bbb R^{\nu*}$, the
  intersection $W\cap S^2_{\geq 0}\Bbb R^{\nu*}$ is a compactification of
  $W\cap S^2_{>0}\Bbb R^{\nu*}$. Now Corollary \ref{cor3.3} says that locally around
  a point in $\Cal O_j$, the compactification $\overline{H/K}$ looks like the product
  of $\Cal O_j$ with this local compactification of $GL(p-j,\Bbb R)/O(p-j)$.
\end{remark}

\section{Parabolic compactifications related to 
$SO(n,\Bbb C)\subset SO_0(n,n)$}\label{4} 
 
As in part (2) of Example \ref{ex2.3}, we consider a non--degenerate complex bilinear
form $b$ on $\Bbb C^n\cong(\Bbb R^{2n},J)$. There we have seen that the imaginary
part of $b$ defines a split--signature inner product $\langle\ ,\ \rangle$ on
$\Bbb R^{2n}$. This gives rise to an inclusion
$SO(n,\Bbb C)\hookrightarrow SO_0(n,n)$ as the fixed point subgroup of the involutive
automorphism $A\mapsto -JAJ$. In this section
we study the resulting parabolic compactifications, with an emphasis on the case of
the Riemannian symmetric space $SO(n,\Bbb C)/SO(n)$ of the non--compact type.

\subsection{Orbits and infinitesimal transversals}\label{4.1}
The generalized flag manifolds of $G:=SO_0(n,n)$ can be realized as the spaces of
isotropic flags in the standard representation $\Bbb R^{(n,n)}$. In addition, one has
to take into account here that in the case of maximally isotropic subspaces
(i.e.~those of dimension $n$), one has to distinguish between self--dual and
anti--self--dual subspaces, since (anti--)self--duality is preserved by the action of
$G$. As before, we will mainly discuss the case of isotropic Grassmannians. In
contrast to Section \ref{3.1}, not even these behave uniformly, but additional
complications arise in the non--maximal case.

Given a linear subspace $V\subset\Bbb R^{2n}$, which is isotropic for the imaginary
part of $b$, one can of course look at the restriction of the real part of $b$ to
$V$, which defines a symmetric bilinear form on $V$. The rank and signature of this
bilinear form are evidently preserved under the action of $H:=SO(n,\Bbb C)$, so they
are basic invariants of the $H$--orbit determined by $V$. In the case that $V$ has
the maximal possible dimension $n$, then these data determine the orbit and also the
(anti--)self--duality properties of $V$:
\begin{lemma}\label{lem4.1}
  Let $V\subset\Bbb C^n=\Bbb R^{2n}$ be a real linear subspace of real dimension $n$,
  which is isotropic for the imaginary part $\langle\ ,\ \rangle$ of $b$ and such
  that $\Re(b)|_V$ has signature $(r,s)$ with $r+s\leq n$. Then $\nu:=n-r-s$ is even,
  say $\nu=2k$, and there is is a complex basis $\{z_1,\dots,z_n\}$ for $\Bbb C^n$
  with respect to which $b$ has the block--matrix representation
  $\left(\begin{smallmatrix} 0 & 0 & \Bbb I\\ 0 & \Bbb I_{r,s} & 0 \\ \Bbb I & 0 &
      0 \end{smallmatrix}\right)$
  with blocks of size $k$, $r+s$, and $k$, such that $V$ is the real span of the
  vectors $z_j$ for $j=1,\dots,k+r+s$ and $iz_j$ for $j=1,\dots,k$.

  Moreover, choosing the orientation of $\Bbb R^{2n}$ appropriately, $V$ is
  self--dual if $n-s$ is even and anti--self--dual if $n-s$ is odd.
\end{lemma}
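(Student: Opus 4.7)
The plan is to treat parts (1) and (2) together by identifying $V \cap JV$ with the null space of $\Re(b)|_V$, which forces it to be a complex subspace and hence of even real dimension; the adapted basis is then constructed via a hyperbolic decomposition. Part (3) is handled using the standard dimension-parity criterion for $SO_0(n,n)$-orbits on maximally isotropic subspaces, applied to a concrete model.

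For (1) and (2), complex bilinearity of $b$ gives $b(Jv, w) = i\,b(v, w)$; for $v, w \in V$, isotropy of $V$ for $\langle\ ,\ \rangle$ makes $b(v, w) = \Re(b)(v, w)$ real, so $\langle Jv, w \rangle = \Re(b)(v, w)$. Since $V$ is maximally $\langle\ ,\ \rangle$-isotropic, $Jv \in V$ iff $v$ lies in the null space $N \subset V$ of $\Re(b)|_V$; thus $V \cap JV = N$ is $J$-stable, so complex, whence $\nu = 2k$ is even, and $N$ is totally $b$-isotropic. To construct the basis, choose a real complement $V' \subset V$ to $N$ of dimension $r + s$; then $\Re(b)|_{V'}$ is non-degenerate of signature $(r, s)$ with $\langle\ ,\ \rangle|_{V'} = 0$, and $V' \cap JV' \subset V \cap JV = N$ combined with $V' \cap N = 0$ shows $V' \oplus JV'$ is a complex subspace of complex dimension $r+s$. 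A real basis $\{e_1, \dots, e_{r+s}\}$ of $V'$ diagonalizing $\Re(b)$ to $\Bbb I_{r,s}$ serves as a complex basis of $V' \oplus JV'$ in which $b$ has matrix $\Bbb I_{r,s}$. On the complex $b$-orthogonal complement $W := (V' \oplus JV')^{\perp_b}$ (complex dimension $2k$, non-degenerate for $b$), $N$ is a $b$-Lagrangian; a standard hyperbolic decomposition yields a dual Lagrangian $N' \subset W$ with bases $\{z_1, \dots, z_k\}$ of $N$ and $\{z_{n-k+1}, \dots, z_n\}$ of $N'$ satisfying $b(z_j, z_{n-k+l}) = \delta_{jl}$. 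Setting $z_{k+j} := e_j$ produces the asserted complex basis, and the real span of $z_1, \dots, z_{k+r+s}, iz_1, \dots, iz_k$ has real dimension $n$ and lies in $N \oplus V' = V$, hence equals $V$.

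For (3), invoke the standard criterion that two maximally $\langle\ ,\ \rangle$-isotropic subspaces $V_1, V_2 \subset \Bbb R^{2n}$ share an $SO_0(n, n)$-orbit iff $n - \dim_{\Bbb R}(V_1 \cap V_2)$ is even. Fix an orthonormal complex basis $\{f_1, \dots, f_n\}$ of $(\Bbb C^n, b)$ and set $V_{\mathrm{ref}} := \mathrm{span}_{\Bbb R}(f_1, \dots, f_n)$, which is maximally isotropic with parameters $(n, 0, 0)$. For general $(r, s, k)$ with $r + s + 2k = n$, take the explicit model
\[
V_0 := \mathrm{span}_{\Bbb R}(f_1, \dots, f_r,\, if_{r+1}, \dots, if_{r+s}) \,+\, \mathrm{span}_{\Bbb C}(f_{r+s+2j-1} + if_{r+s+2j} : j = 1, \dots, k),
\]
which is readily checked to be maximally isotropic with the stated parameters. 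A direct computation gives $V_0 \cap V_{\mathrm{ref}} = \mathrm{span}_{\Bbb R}(f_1, \dots, f_r)$ of dimension $r$, so $n - r = s + 2k \equiv s \pmod 2$; hence $V_0$ and $V_{\mathrm{ref}}$ share their $SO_0$-orbit iff $s$ is even.

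By (2), any $V$ with parameters $(r, s, k)$ lies in $O(n, \Bbb C) \cdot V_0 = SO(n, \Bbb C) \cdot V_0 \,\cup\, SO(n, \Bbb C) \cdot T V_0$ for a fixed $T \in O(n, \Bbb C) \setminus SO(n, \Bbb C)$, say $T = \mathrm{diag}(-1, 1, \dots, 1)$. One verifies directly that $T V_0$ lies in the $SO_0$-orbit of $V_0$: this is automatic when $r + s \geq 1$, since then $T V_0 = V_0$, while for $r = s = 0$ a short calculation gives $\dim_{\Bbb R}(V_0 \cap TV_0) = n - 2$, so $n - (n-2) = 2$ is still even. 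Hence $V$ shares its $SO_0$-orbit with $V_0$, and choosing the orientation of $\Bbb R^{2n}$ so that $V_{\mathrm{ref}}$ is declared self-dual precisely when $n$ is even makes the condition ``$V$ is self-dual'' equivalent to ``$n - s$ is even'' in both parities of $n$. The main obstacle lies in this last step: establishing that $(r, s)$ alone — and not $(r, s)$ together with a finer discrete invariant of the $SO(n, \Bbb C)$-orbit — classifies the $SO_0(n, n)$-orbit; this rests on the check that the two pieces of the $O(n, \Bbb C)$-orbit of $V_0$ (which are sometimes distinct as $SO(n, \Bbb C)$-orbits, e.g.\ when $r = s = 0$) always lie on the same side of the self-dual/anti-self-dual dichotomy.
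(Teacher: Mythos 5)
Your proposal is correct. For the parity of $\nu$ and the adapted basis you follow essentially the paper's route: both arguments identify the null space of $\Re(b)|_V$ with $V\cap J(V)$ using $V=V^\perp$, conclude that it is a complex, $b$--isotropic subspace of even real dimension, and then build the basis by splitting off a non--degenerate piece (your $V'\oplus J(V')$ versus the paper's quotient $W^{\perp_b}/W$ with chosen pre--images) and completing the remaining hyperbolic block by a complementary Lagrangian; these organisations are interchangeable. For the self--duality statement you genuinely diverge: the paper fixes the orientation induced by the complex structure and computes the Hodge star of the decomposable $n$--vector representing $V$ directly, obtaining $*\be=(-1)^{n-s}\be$, whereas you quote the classical fact that the maximal isotropics of $\Bbb R^{(n,n)}$ fall into two families --- the $SO_0(n,n)$--orbits, equivalently the self--dual and the anti--self--dual ones --- distinguished by the parity of $n-\dim(V_1\cap V_2)$, reduce a general $V$ to the explicit model $V_0$ via the adapted basis, check that the reflection $T$ does not move $V_0$ out of its family, and compute $\dim(V_0\cap V_{\mathrm{ref}})=r$. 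That argument is sound: $V_0$ does have invariants $(r,s)$, the intersection computations are right, and since $SO(n,\Bbb C)$ is connected it lies in $SO_0(n,n)$, so your explicit treatment of the two components of $O(n,\Bbb C)$ is exactly the point that needs care. Its cost is that it leans on the standard package (duality type constant on $SO_0$--orbits, the two families carrying opposite types, the parity criterion), which the paper also treats as well known, and that it only produces \emph{some} suitable orientation. The paper's direct computation buys more: it establishes the statement for the canonical complex orientation and yields the explicit eigenvalue $(-1)^{n-s}$, which is what is subsequently used to describe $IGr^{\pm}(n,\Bbb R^{(n,n)})$ concretely in Section \ref{4.1}.
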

\begin{proof}
  Let us denote by $J$ the complex structure on $\Bbb C^n$, by $\perp$ the
  orthocomplement with respect to $\langle\ ,\ \rangle$ and by $\perp_b$ the
  orthocomplement with respect to $b$. Then the real part of $b$ can be written as
  $(v,w)\mapsto\langle v,Jw\rangle$, so the null space of its restriction is
  $W:=J(V)\cap V^\perp=J(V)\cap V$, where we have used that $V$ is maximally
  isotropic in the last step. Since $W$ evidently is a complex subspace of $V$, its
  real dimension $\nu$ has to be even. Putting $\nu=2k$, we choose a complex basis
  $\{z_1,\dots,z_k\}$ for $W$. On the other hand, we conclude that $W^{\perp_b}$ is a
  complex subspace of $\Bbb C^n$ which has complex dimension $n-k$ and contains $V$.

  Now $b$ descends to a non--degenerate complex bilinear form $\underline{b}$ on the
  complex vector space $W^{\perp_b}/W$, which has complex dimension $n-2k$. Since
  $W=V\cap J(V)$, the image $\underline{V}$ of $V\subset W^{\perp_b}$ in this
  quotient has to be a totally real subspace of real dimension $n-k$. Moreover, the
  restriction of $\underline{b}$ to this subspace has to have signature $(r,s)$ so
  $\underline{b}|_{\underline{V}}$ is non--degenerate. Choose a real orthonormal
  basis of $\underline{V}$ and pre--images $z_{k+1},\dots,z_{k+r+s}$ of the basis
  elements in $V$. Then by construction these vectors descend to a complex basis of
  $W^{\perp_b}/W$, so they span a complex subspace $\tilde V\subset W^{\perp_b}$ such
  that $W^{\perp_b}=W\oplus\tilde V$.

  Finally, we consider $\tilde V^{\perp_b}\subset\Bbb C^n$. This is a complex
  subspace of complex dimension $n-2k$, on which $b$ is non--degenerate, and which
  contains the subspace $W$ which is isotropic for $b$. Thus one can find a complex
  subspace $\tilde W\subset\tilde V^{\perp_b}$ which also is isotropic for $b$ and
  complementary to $W$ in there. So $b$ identifies $\tilde W$ with dual $W^*$ and
  hence we can find a complex basis $\{z_{n-k+1},\dots,z_n\}$ for $\tilde W$ such
  that $b(z_j,z_{n-k+\ell})=\delta_{j\ell}$ for $j,\ell=1,\dots,k$. By construction
  $\{z_1,\dots,z_n\}$ is a complex basis of $\Bbb C^n$ which has all required
  properties.

  To prove the statement about (anti--)self--duality, we fix the orientation of
  $\Bbb R^{2n}=\Bbb C^n$ in such a way that for any complex basis $\{z_1,\dots,z_n\}$
  the real basis $\{z_1,\dots,z_n,iz_1,\dots,iz_n\}$ has positive orientation. Taking
  the basis $\{z_1,\dots,z_n\}$ adapted to $V$ as above, we have to compute the
  Hodge--$*$ of
$$
\be:=z_1\wedge\dots\wedge z_{n-k}\wedge iz_1\wedge\dots \wedge iz_k,
$$ 
and we use the standard formula $\al\wedge *\be=\langle\al,\be\rangle\vol$. Now up to
sign, $\be$ is one of the elements of the basis for $\La^n_{\Bbb R}\Bbb R^{2n}$
induced by the basis $\{z_j,iz_j\}$. There is just one other element $\al$ in this
basis for which $\langle\al,\be\rangle$ is non--zero, namely the wedge product of the
elements $iz_{k+1},\dots,iz_n$ and $z_{n-k+1},\dots,z_n$. We order these elements in
such a way that we get a simple expression for $\langle\al,\be\rangle$ and thus use
  $$
\al=iz_{n-k+1}\wedge\dots\wedge iz_n\wedge
  iz_{k+1}\wedge\dots\wedge iz_{n-k}\wedge z_{n-k+1}\wedge\dots\wedge
  z_n.
$$
Then $\langle\al,\be\rangle$ is the determinant of the $(n\x n)$--matrix of mutual
inner products between the factors in the wedge product. We have arranged things in
such a way that this matrix is diagonal with $s$ entries equal to $-1$ and $n-s$
entries equal to $1$, so $\langle\al,\be\rangle=(-1)^s$. On the other hand,
reordering the wedge products, we conclude that $\al\wedge\be=(-1)^n\vol$ and thus
$*\be=(-1)^{n-s}\be$.
\end{proof}

This implies a complete description of the set of $H$--orbits in the isotropic
Grassmannian $IGr^\pm(n,\Bbb R^{(n,n)})$, where the superscript indicates self--duality
respectively anti--self--duality. We denote by $\Cal O_{(r,s)}$ the set of those
maximal isotropic subspaces on which the restriction of the real part of $b$ has
signature $(r,s)$. Then we get $IGr^+(n,\Bbb R^{(n,n)})$ is the union of the
$\Cal O_{(r,s)}$ with $r+s\leq n$, such that both $n-r-s$ and $n-s$ are even even,
while $IGr^-(n,\Bbb R^{(n,n)})$ is the union of the orbits for which $n-r-s$ is even but
$n-s$ is odd. The open orbits are exactly those in which $r+s=n$, while
$\Cal O_{(r',s')}\subset\overline{\Cal O}_{(r,s)}$ if and only if $r'\leq r$ and
$s'\leq s$ and $s'$ has the same parity as $s$.

In particular, there is the orbit $\Cal O_{(n,0)}\subset IGr^+(n,\Bbb R^{(n,n)})$ which,
for the standard complex bilinear form $b$, contains the subspace
$\Bbb R^n\subset\Bbb C^n$. The stabilizer of this subspace in $H$ visibly is given by
the matrices in $SO(n,\Bbb C)$ for which all entries are real, so this is just
$SO(n)$. Thus, we get a parabolic compactification of $H/K:=SO(n,\Bbb C)/SO(n)$ of
the form $\overline{H/K}=\cup_{i=0}^{\lfloor n/2\rfloor}\Cal
O_{(n-2i,0)}$.
Similarly, one obtains parabolic compactifications of $SO(n,\Bbb C)/SO_0(p,q)$ for
$p+q=n$.

From the proof of Lemma \ref{lem4.1}, one can also see that already for the isotropic
Grassmannians $IGr(\ell,\Bbb R^{(n,n)})$ with $\ell<n$, the orbits of
$H=SO(n,\Bbb C)$ are not determined by rank and signature of the restriction of the
real part of $b$ alone. Indeed, for an isotropic subspace $V\subset\Bbb R^{(n,n)}$ of
dimension $\ell<n$, the null space of $\Re(b)$ is $J(V)\cap V^\perp$. Hence the
co--rank does not have to be even in general, and there is an additional
distinguished subspace in the null space of $\Re(b)$, namely the maximal complex
subspace $J(V)\cap V$ of $V$. Hence the real dimension of this subspace, which has to
be even and at most equal to the co--rank, is an additional invariant preserved by
the action of $H$. So the two--step flag $V\subset V^\perp$ plays a similar role as
in the discussion in Section \ref{3.1}. Passing to more general isotropic flag
manifolds, one gets additional invariants depending on the relative position of the
individual constituents of the flag and their orthocomplements with respect to $J$.

Let us next pass to the description of the individual orbits and of the infinitesimal
transversal. Similar to the discussion in Proposition \ref{3.1} each orbit admits a
fibration onto a generalized flag manifold of $H$ with fiber a lower dimensional
analog of $H/K$. The infinitesimal transversals are described by Hermitian matrices,
which is surprising, since the setup does not seem to naturally lead to a Hermitian
inner product.
\begin{prop}\label{prop4.1}
  Fix $r$ and $s$ such that $n-r-s=2k$. Then we have

  (1) The orbit $\Cal O:=\Cal O_{(r,s)}\subset IGr^\pm(n,\Bbb R^{(n,n)})$ has
  codimension $k^2$. There is an $H$--equivariant, surjective submersion from
  $\Cal O_{(r,s)}$ onto the Grassmannian $IGr(k,\Bbb C^n)$ of $k$--dimensional
  complex subspaces which are isotropic for $b$. The fibers of this map are
  isomorphic to $SO(n-2k,\Bbb C)/SO(r,s)$.

  (2) Replacing $H$ by a conjugate subgroup $H_{\Cal O}$, as described
  in Section \ref{2.4}, the infinitesimal transversal of $\Cal O$ is
  given by the natural representation of $GL(k,\Bbb C)$ on the space
  of Hermitian $k\x k$--matrices via a natural surjective homomorphism
  $H_{\Cal O}\cap P\to GL(k,\Bbb C)$.
\end{prop}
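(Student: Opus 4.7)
The plan follows the template of Proposition \ref{prop3.1}, with the extra task of tracking the complex structure $J$ throughout. For part (1), I define $\pi:\Cal O_{(r,s)}\to IGr(k,\Bbb C^n)$ by $\pi(V):=V\cap J(V)$. By Lemma \ref{lem4.1}, $W:=V\cap J(V)$ is complex of dimension $k$ and is the null space of $\Re(b)|_V$; moreover $W$ is $b$-isotropic, since the isotropy of $V$ for $\Im(b)$ together with the identity $\Re(b)(v,w)=\Im(b)(v,Jw)$ (valid for any $\Bbb C$-bilinear $b$) forces both $\Re(b)$ and $\Im(b)$ to vanish on $W$. The map $\pi$ is $H$-equivariant and smooth. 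For the fibre over $W$, any $V\in\pi^{-1}(W)$ must contain $W$ and lie in $W^{\perp_b}$, and the image $V/W$ in $W^{\perp_b}/W\cong\Bbb C^{n-2k}$ must be a totally real subspace on which the induced non-degenerate complex form $\underline b$ has signature $(r,s)$. Reversing the construction in the proof of Lemma \ref{lem4.1} shows both surjectivity of $\pi$ and that $SO(n-2k,\Bbb C)$ acts transitively on such totally real subspaces with stabilizer $SO(r,s)$, yielding the fibre description; submersivity of $\pi$ follows since it is an equivariant map between $H$-homogeneous spaces.

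For part (2), I work in the basis from Lemma \ref{lem4.1} adapted to a chosen $V\in\Cal O$, in which $H_{\Cal O}$ is realized as the subgroup of $G$ commuting with $J$ and preserving $b$ in the stated block form. The homomorphism $H_{\Cal O}\cap P\to GL(k,\Bbb C)$ is the natural restriction map: any element of $H_{\Cal O}\cap P$ commutes with $J$ and preserves $V$, hence preserves $W=V\cap J(V)$ complex linearly. Surjectivity follows by exhibiting an arbitrary element of $\frak{gl}(k,\Bbb C)$ as the top-left $k\times k$ block of a suitable element of $\frak h_{\Cal O}\cap\frak p$, using the explicit block form of $\frak h_{\Cal O}$. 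To identify the infinitesimal transversal, I would decompose $\Bbb C^n=W\oplus\tilde V\oplus W'$ complex-linearly (with blocks of sizes $k,n-2k,k$) and $V=W\oplus\underline V$ with $\underline V\subset\tilde V$ totally real, use the isomorphism $\frak g/\frak p\cong\La^2V^*$ for the Lagrangian $V$, and compute the image of $\frak h_{\Cal O}$. The residual $k^2$-dimensional quotient should then be identifiable with the $J$-invariant part of $\La^2W^*_{\Bbb R}$, which is canonically the space of Hermitian forms on $W$, with the inherited $H_{\Cal O}\cap P$-action factoring through the standard $GL(k,\Bbb C)$-action. The codimension count $k^2$ falls out as a byproduct.

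\textbf{Main obstacle.} The substantive step is this last identification. The emergence of a Hermitian (rather than purely bilinear) structure is the genuinely new feature of this case compared to Section \ref{3}, and its origin lies in the interaction between $J$ and the Lagrangian $V$: the complex subspace $W=V\cap J(V)$ converts the ambient split-signature bilinear form into a Hermitian form on the transversal. Carrying out this identification cleanly, while keeping the $GL(k,\Bbb C)$-equivariance explicit, will require a careful block-matrix computation in $\frak{so}(n,n,\Bbb R)$ analogous to but noticeably more intricate than the one in Proposition \ref{prop3.1}(3).
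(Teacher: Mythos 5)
Part (1) of your proposal is correct and is essentially the paper's own argument: the map $V\mapsto W:=V\cap J(V)$, the check that $W$ is $b$--isotropic, the identification of the fibre with the admissible totally real subspaces of $W^{\perp_b}/W$, surjectivity by reversing the construction in Lemma \ref{lem4.1}, and submersivity by equivariance as in Proposition \ref{prop3.1}.

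For part (2), however, you have written a plan rather than a proof, and the step you defer as the ``main obstacle'' is precisely the content of the statement. Concretely, three things are left unestablished: (i) that the image of $\frak h_{\Cal O}$ in $\frak g/\frak p\cong\La^2V^*$ lies in the kernel of the ($H_{\Cal O}\cap P$--equivariant) composite ``restrict forms to $W$, then project to the $J$--invariant part''; (ii) that this image has codimension exactly $k^2$, so that the induced map on the quotient is an isomorphism --- note this is also where the codimension claim of part (1) comes from, which you postponed to (2); and (iii) surjectivity of $H_{\Cal O}\cap P\to GL(k,\Bbb C)$, which you only assert can be read off the block form. Point (iii) and the description of the action are indeed easy once (i) and (ii) are in place, since the quotient is then equivariantly identified with the Hermitian forms on $W$ and every element of $H_{\Cal O}\cap P$ acts on $W$ complex linearly (your use of the action on $W$ instead of, as in the paper, on $\Bbb C^n/W^{\perp_b}$ only changes the homomorphism by a contragredient and is harmless). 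But (i) and (ii) do not follow from general principles; they are exactly the block computation you leave out. The paper does it by refining the basis of Lemma \ref{lem4.1} into six groups of sizes $k,k,n-2k,n-2k,k,k$, writing out $\frak g$, the condition of commuting with $J$, and $\frak h_{\Cal O}\cap\frak p$ in this block form, and exhibiting an explicit invariant complement to $\frak h_{\Cal O}/(\frak h_{\Cal O}\cap\frak p)$: the matrices whose only nonzero lower--left blocks are a symmetric $M_{51}$ and a skew $M_{52}$, with $M_{61}=M_{52}$ and $M_{62}=-M_{51}$; the Hermitian matrix is then $M_{51}+iM_{61}$, and a short further computation shows that the isotropy acts on it by the standard $\frak{gl}(k,\Bbb C)$--action, depending only on the $GL(k,\Bbb C)$--part of the isotropy. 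Your anticipated answer (transversal $=$ $(1,1)$--part of $\La^2W^*$ $=$ Hermitian forms on $W$) is correct and your intended route coincides with the paper's, but until this computation is carried out the codimension $k^2$, the identification of the transversal, and the surjectivity onto $GL(k,\Bbb C)$ are asserted rather than proved.
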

\begin{proof}
  (1) We continue using the notation for orthocomplements from the
  proof of Lemma \ref{lem4.1}. From there we know that the complex
  subspace $W:=J(V)\cap V$ has complex dimension $k$ and is isotropic
  for $b$. Sending $V$ to $W$ defines the claimed map to the complex
  isotropic Grassmannian. We have also seen in that proof that $V$
  descends to a totally real subspace $\underline{V}\subset
  W^{\perp_b}/W$ which is isotropic for the imaginary part of the
  induced complex bilinear form $\underline{b}$, while the restriction
  of the real part of $\underline{b}$ to $\underline{V}$ is
  non--degenerate of signature $(r,s)$. This leads to the description
  of the fiber. Since conversely starting from a complex isotropic
  subspace $W\subset\Bbb C^n$ and an appropriate totally real subspace
  $\underline{V}\subset W^{\perp_b}/W$, the pre--image of
  $\underline{V}$ in $\Bbb C^n$ evidently lies in $\Cal O$, we get
  surjectivity. The codimension of the orbit follows from standard
  results on the dimensions of the spaces involved. Part (2) provides
  an alternative proof.

  (2) Passing to $H_{\Cal O}$ simply means that we use matrix representations with
  respect to a basis adapted as to $V\in\Cal O$ as in Lemma \ref{lem4.1}, and then
  use the stabilizer of $V$ in $G$ as our parabolic subgroup $P$. We use a basis as
  obtained in Lemma \ref{lem4.1} to split real linear endomorphisms of
  $\Bbb C^n\cong\Bbb R^{2n}$ into blocks in two different ways. In the notation of
  that lemma, we order the real basis vectors by splitting them into $6$ groups of
  vectors. We first take $z_1,\dots,z_k$, then $iz_1,\dots,iz_k$, next
  $z_{k+1},\dots, z_{n-k}$, then $iz_{k+1},\dots, iz_{n-k}$, then
  $z_{n-k+1},\dots,z_n$ and finally $iz_{n-k+1},\dots,iz_n$. Correspondingly, we
  write elements of $\frak g$ as block matrices with $6\x 6$ blocks of sizes $k$,
  $k$, $n-2k$, $n-2k$, $k$, and $k$, respectively. We will use the notation $(M_{jk})$
  for this block decomposition, with $j,k=1,\dots,6$. From the construction, it is
  clear how $J$ is described in terms of such block matrices.

  On the other hand, we can use the coarser decomposition into four blocks of size
  $n\x n$, by collecting the first three groups of basis vectors and the last three
  groups of basis vectors. Here the first $n$ basis vectors by construction form a
  basis of $V$ and the span of the last $n$ basis vectors is also isotropic for
  $\langle\ , \rangle$. Correspondingly, the matrix of $\langle\ , \rangle$ in that
  block decomposition has the form $\begin{pmatrix} 0 & \Bbb J^t \\ \Bbb J & 0
  \end{pmatrix}$. Here $\Bbb J$ is the $n\x n$--matrix, which split into blocks of
  sizes $k$, $k$ and $n-2k$ has the form $\left(\begin{smallmatrix} 0 & 0 & \Bbb I_{r,s}\\
    0 & \Bbb I & 0\\ \Bbb I& 0 & 0\end{smallmatrix}\right)$,
  with notation as in Lemma \ref{lem4.1}.  Correspondingly, the matrices in $\frak g$
  are exactly those which have the coarse block form
  $\begin{pmatrix} A_{11} & A_{12} \\ A_{21} & -\Bbb JA_{11}^t\Bbb J^t\end{pmatrix}$
  with $n\x n$--matrices $A_{ij}$ such that $A_{12}=-\Bbb JA_{12}^t\Bbb J^t$ and
  $A_{21}=-\Bbb JA_{21}^t\Bbb J^t$.

  Now expressing the condition on $A_{21}$ in terms of the finer block decomposition
  immediately shows that this has to have the form $
  \begin{pmatrix} M_{41} & M_{42} & M_{43} \\ M_{51} & M_{52} & -M_{42}^t\Bbb I_{r,s}
    \\ M_{61} & -M_{51}^t & -M_{41}^t \Bbb I_{r,s} \end{pmatrix}$
  with $M_{52}^t=-M_{52}$ $M_{61}^t=-M_{61}$ and $M_{43}\in\frak o(r,s)$. From the
  explicit form of $J$, it is easy to describe the subalgebra
  $\frak h\subset\frak g$. It simply consists of those block matrices in $\frak g$
  such that for each odd $j$ and $k$ we have $M_{j+1,k+1}=M_{j,k}$ and
  $M_{j,k+1}=-M_{j+1,k}$. This confirms that any element of $\frak h\cap\frak p$ also
  stabilizes $W$ and $W^{\perp_b}$ (which are spanned by the first $2k$ respectively
  all but the last $2k$ basis vectors). Moreover, it shows that the action of
  $H\cap P$ on $\Bbb C^n/W^{\perp_b}$ defines a surjective homomorphism
  $H\cap P\to GL(k,\Bbb C)$. On the Lie algebra level, this is represented by the
  block $\begin{pmatrix} M_{55} & M_{56} \\ M_{65} & M_{66}\end{pmatrix}$ (for which
  we know that $M_{56}=-M_{65}$ and $M_{66}=M_{55}$. We also know from above that we
  get $M_{11}=M_{22}=-M_{55}^t$ while $M_{21}=-M_{12}=-M_{65}^t$.

  For the block decomposition of $A_{21}$ from above, we get
  additional restrictions, namely $M_{51}=-M_{51}^t$ and
  $M_{52}=-M_{61}$. A short computation shows that one does not obtain
  further restrictions on $M_{41}$, $M_{42}$, and $M_{43}$, they only
  have to be related to blocks in the row above. From this, we can
  read off a complement to $\frak h/(\frak h\cap\frak p)$ in $\frak
  g/\frak p$. The matrices whose only nonzero blocks are $M_{51}$,
  $M_{52}$, $M_{61}=M_{52}$ and $M_{62}=-M_{51}$ such that
  $M_{51}^t=M_{51}$ and $M_{52}^t=-M_{52}$ visibly descend to such a
  complement. Since $M_{51}$ and $M_{61}$ both are $k\x k$--matrices
  we see that $\Cal O$ indeed has codimension $k^2$.

  To determine the action of $\frak h\cap\frak p$ on the infinitesimal transversal, we
  can compute the adjoint action on this complementary subspace and then project to
  the quotient. For $(N_{jk})\in\frak h\cap\frak p$, the resulting action depends
  only on the blocks $N_{jk}$ for $j,k=1,2$ (which also determine the parts for
  $j,k=5,6$). Indeed, a short computation shows that the action sends $
  \begin{pmatrix} M_{51} & M_{52} \\ M_{52} & -M_{51}\end{pmatrix}$ (where we still
  have $M_{51}^t=M_{51}$ and $M_{52}^t=-M_{52}$ to a matrix of the same form with
  first column given by 
$$
\begin{pmatrix}
  -N_{11}^tM_{51}-N_{12}^tM_{61}-M_{51}N_{11}+M_{61}N_{12} 
  \\
  N_{12}^tM_{51}-N_{11}^tM_{61}-M_{61}N_{11}-M_{51}N_{12} 
\end{pmatrix}.
$$ 
This shows that our complementary subspace  is invariant under the
action. Moreover, then considering the Hermitian matrix $M_{51}+iM_{61}$ this action
exactly corresponds to the standard action
$$
(-N_{11}^t+iN_{12}^t)(M_{51}+iM_{61})-(M_{51}+iM_{61})(-N_{11}-iN_{12})
$$ 
of $\frak{gl}(k,\Bbb C)$ on Hermitian $k\x k$--matrices.
\end{proof}

\subsection{Orbit closures via BGG solutions}\label{4.2} 
The first steps of this are closely similar to the case discussed in Section
\ref{3.2}. We consider one of the isotropic Grassmannians $IGr^\pm(n,\Bbb
R^{(n,n)})=G/P$, where $G=SO_0(n,n)$. The tautological bundles on these Grassmannians
are determined by the subbundle $E$ in the trivial bundle with fiber $\Bbb R^{(n,n)}$,
whose fiber at a point $V\in IGr^\pm(n,\Bbb R^{(n,n)})$ is the subspace $V\subset\Bbb
R^{(n,n)}$. Via the invariant inner product $\langle\ ,\ \rangle$ on $\Bbb R^{2n}$, the
quotient bundle $\Bbb R^{(n,n)}/E$ gets identified with $E^*$. For our choice of
parabolic subgroup $P$, the Levi factor $G_0\subset P$ is $GL(n,\Bbb R)$ and the
bundle $E$ is the completely reducible bundle associated to the standard
representation of that group. From the description in the proof of Proposition
\ref{prop4.1} it is clear that $\frak g/\frak p$ is isomorphic to $\La^2\Bbb R^{n*}$
as a representation of $G_0$, while the nilradical of $\frak p$ acts trivially on
$\frak g/\frak p$. Correspondingly, the tangent bundle $T(G/P)$ is naturally
identified with $\La^2E^*$, which defines an \textit{almost spinorial structure} on
$G/P$, making it into the homogeneous model for such structures. 

Completely reducible natural bundles on $G/P$ are induced by representations of the
group $G_0=GL(n,\Bbb R)$, so they can be built up from the bundles $E$ and $E^*$ by
tensorial constructions. In particular, for $k=1,\dots,n$ we can form $\La^kE^*$ and
then take the subbundle $\ocirc^2(\La^kE^*)\subset S^2(\La^kE^*)$ as described in
Section \ref{3.2}. By construction, this is an irreducible natural bundle over $G/P$,
and for $k=1$, $n-1$, and $n$ it coincides with $S^2(\La^kE^*)$. In particular, it is
a density bundle for $k=n$.  The simplest tractor bundle for almost spinorial
structures is the standard tractor bundle $\Cal T$, which corresponds to the standard
representation $\Bbb R^{(n,n)}$ of $G$. On the homogeneous model $G/P$, this bundle is
naturally isomorphic to the trivial bundle $(G/P)\x\Bbb R^{(n,n)}$. The $P$--invariant
filtration of $\Cal T$ just consists of the subbundle $E$ for which $\Cal T/E\cong
E^*$. More generally, any $G$--irreducible subrepresentation of a tensor power of
$\Bbb R^{(n,n)}$ induces a tractor bundle over $G/P$.

\begin{thm}\label{thm4.2}
Consider the decomposition of one of the isotropic Grassmannians $IGr^\pm(n,\Bbb
R^{(n,n)})$ into the orbits $\Cal O_{(r,s)}$ for the subgroup $H\subset G$ as described
in Proposition \ref{prop4.1}. For each $k=1,\dots,n$, there is a section
$\si_k\in\Ga(\ocirc^2(\La^kE))$, which lies in the kernel of the first BGG operator
defined on that bundle whose zero--locus is the union of those $H$--orbits $\Cal
O_{(r,s)}$, for which $r+s<k$. The relevant first BGG operator is of first order if
$k<n-1$, of second order for $k=n-1$ and of third order for $k=n$.
\end{thm}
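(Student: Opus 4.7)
The plan is to follow the approach of Theorem \ref{thm3.2}, replacing the symmetric bilinear form $b$ used there by the real part $\Re(b)$ of the complex bilinear form defining $H$. Since $H = SO(n,\Bbb C)$ preserves $b$, it preserves $\Re(b)$ and $\langle\ ,\ \rangle = \Im(b)$ separately, so $\Re(b) \in S^2\Bbb R^{(n,n)*}$ is an $H$--invariant element of the $G$--representation $S^2\Bbb R^{(n,n)*}$. Theorem \ref{thm2.6} thus produces a parallel section $s_1 \in \Ga(S^2\Cal T^*)$, and $\si_1 := \Pi(s_1) \in \Ga(S^2 E^*)$ lies in the kernel of the corresponding first BGG operator. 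In the canonical trivialization of $\Cal T$, the value $\si_1(V)$ equals the restriction $\Re(b)|_{V\times V}$; by Proposition \ref{prop4.1} this restriction has rank $r+s$ precisely on $\Cal O_{(r,s)}$.

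For general $k$, I would form $\La^k(\Re(b))$ exactly as in Lemma \ref{lem3.2}. The proof of that lemma uses only $SL$--invariant representation--theoretic arguments and applies verbatim (with $n$ replaced by $2n$), placing $\La^k(\Re(b))$ in the irreducible subrepresentation $\ocirc^2(\La^k\Bbb R^{(n,n)*})$. This element remains $H$--invariant and so yields a parallel section $s_k$ of $\ocirc^2(\La^k\Cal T^*)$, whose BGG projection $\si_k$ sits in the appropriate irreducible quotient bundle of the statement (identified, via the isomorphism $F \cong E^*$ afforded by $\langle\ ,\ \rangle$, with the natural completely reducible quotient of $\ocirc^2(\La^k\Cal T^*)$). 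Functoriality of wedge powers then gives $\si_k(V) = \La^k(\si_1(V))$ as in Theorem \ref{thm3.2}, and factoring $\si_1(V): V \to V^*$ through its image shows that $\La^k(\si_1(V))$ vanishes if and only if $\operatorname{rank}(\si_1(V)) < k$, i.e.\ if and only if $V \in \Cal O_{(r,s)}$ with $r+s < k$. This identifies the zero locus of $\si_k$ as claimed.

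The main technical obstacle is pinning down the order of the first BGG operator in the three regimes $k < n-1$, $k = n-1$, and $k = n$. By the general discussion in Section \ref{2.6}, this is an algorithmic computation: once the $P$--representation inducing $\ocirc^2(\La^k\Cal T^*)$ is identified, Kostant's theorem produces the target bundle $\Cal H_1$, and the order of $D$ is read off from the length of the associated Weyl--group element. For the ``spinorial'' maximal parabolic of $D_n$ this yields three distinct cases. In the generic range $k < n-1$, the standard picture yields a first--order operator, exactly as for the analogous range in Theorem \ref{thm3.2}. At $k = n-1$, the line--bundle degeneracy $\La^{n-1}E \cong E^* \otimes \La^n E$ shifts the relevant Weyl--group element by one extra simple reflection, producing order two. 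At $k = n$, the bundle $\ocirc^2(\La^n E^*)$ collapses to a density bundle (compare Proposition \ref{prop3.4}), forcing order three. Each of these three cases can be verified directly by applying Kostant's theorem along the lines of \cite{BCEG}; this is the only step where the $D_n$ geometry genuinely departs from the $A_n$ analysis used in Theorem \ref{thm3.2}.
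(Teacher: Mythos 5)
Your first step and your overall strategy do match the paper: the paper also starts from the $H$--invariant element $J\in S^2_0\Bbb R^{(n,n)*}$ (which, via $\langle\ ,\ \rangle$, is exactly $\Re(b)$ viewed as a bilinear form), gets a parallel section $s_1$ of $S^2_0\Cal T^*$, projects to $\si_1\in\Ga(S^2E^*)$, and then uses $\La^k$ and the functoriality/rank argument from Theorem \ref{thm3.2} to read off zero loci; the orders are likewise only asserted from the weights of the inducing representations, so your level of detail there is comparable. The genuine gap is in the passage from $k=1$ to general $k$, precisely where the paper says the representation theory becomes ``more involved''. You claim that Lemma \ref{lem3.2} applies verbatim and that $\ocirc^2(\La^k\Bbb R^{2n*})$ therefore yields a tractor bundle $\ocirc^2(\La^k\Cal T^*)$ whose completely reducible quotient is $\ocirc^2(\La^kE^*)$. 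That identification is false for $G=SO_0(n,n)$: Lemma \ref{lem3.2} only places $\La^k\Re(b)$ in the $SL(2n,\Bbb R)$--irreducible component $W_k$, and $W_k$ is \emph{not} irreducible for $G$, because $\langle\ ,\ \rangle$ defines nontrivial traces on it; as a $G$--representation $W_k\cong\oplus_{j=0}^k\ocirc^2_0(\La^j\Bbb R^{(n,n)*})$, and correspondingly the completely reducible quotient of the associated bundle is $\oplus_{j\leq k}\ocirc^2(\La^jE^*)$, not the single bundle in the statement. Consequently ``the first BGG operator defined on that bundle'' is the operator attached to the $G$--irreducible piece $\ocirc^2_0(\La^k\Bbb R^{(n,n)*})$, and the section $\si_k$ of the theorem must be obtained from the \emph{component} of $\La^kJ$ in that top summand, not from $\La^kJ$ itself.

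Once this is recognized, an extra argument is needed which your proposal does not contain: one must show that the lower summands $\ocirc^2_0(\La^j\Bbb R^{(n,n)*})$, $j<k$, of $\La^kJ$ do not contribute to the projection onto $\ocirc^2(\La^kE^*)$, so that the BGG solution $\si_k$ coming from the top component still satisfies $\si_k(V)=\La^k(\si_1(V))$ and hence has the claimed zero locus. The paper does this by observing that the $P$--irreducible quotient of $\ocirc^2_0(\La^j\Bbb R^{(n,n)*})$ is $\ocirc^2(\La^j\Bbb R^{n*})$, so for $j<k$ there is no nonzero $G_0$--equivariant map to $\ocirc^2(\La^k\Bbb R^{n*})$, whence the projection of $W_k$ used in Theorem \ref{thm3.2} factors through the top summand. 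Without this step your $\si_k$ is either not a solution of the stated BGG operator (if you project all of $\La^k\Re(b)$) or not yet known to equal $\La^k\si_1$ (if you take only the top component), so the identification of the zero locus is not established. This trace decomposition and equivariance argument is exactly the point where the $D_n$ situation departs from the $A_n$ analysis, and it needs to be supplied; your $k=1$ case is unaffected because $\Re(b)$ is automatically trace--free there.
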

\begin{proof}
  The basic strategy is the same as in the proof of Theorem \ref{thm3.2}, but some of
  the representation theory is more involved. We continue viewing $\Bbb R^{2n}$ as
  $\Bbb C^n$ and $\langle\ ,\ \rangle$ as the imaginary part of the standard
  symmetric complex bilinear form $b$. Then multiplication by $i$ defines an
  endomorphism of $\Bbb R^{(n,n)}$ which is symmetric for $\langle\ ,\ \rangle$ and
  trace--free. Thus it defines an element $J\in S^2_0\Bbb R^{(n,n)*}$ whose stabilizer
  in $G$ is the subgroup $H$. This in turn defines a parallel section $s_1$ of the
  tractor bundle $S^2_0(\Cal T^*)$ which can be either considered as an endomorphism
  of $\Cal T$ or as a symmetric bilinear form on that bundle. The irreducible qoutient
  of $S^2_0\Cal T^*$ is just $S^2E^*$ and projecting $s_1$, we obtain a section
  $\si_1\in\Ga(S^2E^*)$, which lies in the kernel of the first BGG operator defined
  on that bundle. For a point $V\in G/P$, $E_V=V$ and $\si_1(V)$ by construction is
  the restriction of $\Re(b)$ to $V$. Thus $V$ lies in $\Cal O_{(r,s)}$ if and only
  if $\si_1(V)$ has signature $(r,s)$ (and thus rank $r+s$).

  Parallel to the discussion in Section \ref{3.2}, for $k=1,\dots,n$, the
  endomorphism $\La^kJ$ of $\La^k\Bbb R^{(n,n)}$ can also be viewed as a bilinear form
  on $\La^k\Bbb R^{(n,n)}$. By Lemma \ref{lem3.2} this sits in the
  $GL(2n,\Bbb R)$--irreducible component  $W_k\subset S^2(\La^k\Bbb R^{2n*})$ of
  maximal highest weight. Now for most $k$, $W_k$ is not irreducible for $G$, since
  the inner product $\langle\ ,\ \rangle$ defines non--trivial traces on it. The
  joint kernel of these traces is the $G$--irreducible component of highest weight in
  $S^2(\La^k\Bbb R^{(n,n)*})$, which we denote by $\ocirc_0^2(\La^k\Bbb R^{(n,n)*})$. It
  turns out that, as a representation of $G$, $W_k$ is isomorphic to
  $\oplus_{j=0}^k\ocirc_0^2(\La^j\Bbb R^{(n,n)*})$. Here for $j=0,1$, we obtain a
  trivial summand and $S^2_0(\Bbb R^{(n,n)})$, respectively. Now we can split
  $\La^kJ\in W_k$ according to this decomposition. The component of $\La^kJ$ in
  $\ocirc^2_0(\La^k\Bbb R^{(n,n)*})$ defines a section
  $s_k\in\Ga(\ocirc^2_0(\La^k\Cal T^*))$, which is parallel for the canonical tractor
  connection. Projecting to the irreducible quotient bundle, we obtain a section
  $\si_k\in\Ga(\ocirc^2(\La^kE^*))$ which lies in the kernel of the first BGG
  operator defined on that bundle.

  Returning to the decomposition of $W_k$ into irreducibles, we can split each of the
  representations $\ocirc_0^2(\La^j\Bbb R^{(n,n)*})$ into irreducibles with respect to
  $G_0=GL(n,\Bbb R)$. In particular, the $P$--irreducible quotient of
  $\ocirc_0^2(\La^j\Bbb R^{(n,n)*})$ is $\ocirc^2(\La^j\Bbb R^{n*})$, which easily
  implies that for $j<k$, there is no nonzero $G_0$--equivariant map
  $\ocirc_0^2(\La^j\Bbb R^{(n,n)*})\to\ocirc^2(\La^k\Bbb R^{n*})$. But this implies that
  the natural projection from $W_k$ to its irreducible quotient
  $\ocirc^2(\La^k\Bbb R^{n*})$ which we used in the proof of Theorem \ref{thm3.2}
  factors through $\ocirc^2_0(\La^k\Bbb R^{(n,n)*})$. But this shows that $\si_k$ is
  induced by the image of $\La^kJ$ under the latter projection, so from the proof of
  Theorem \ref{thm3.2} we see that $\si_k=\La^k\si_1$, which leads to the description
  of the zero loci, as in that proof. The orders of the BGG operators can again be
  read off from the weights of the inducing representations.
\end{proof}

\subsection{A slice theorem}\label{4.3}
In Proposition \ref{prop4.1}, we have obtained infinitesimal transversals which are
formed by Hermitian matrices. This is rather surprising, since initially there does
not seem to be a natural notion of conjugation around. We start by proving a Lemma
which directly explains how Hermitian metrics enter the picture.
\begin{lemma}\label{lem4.3}
  For even $k=2\ell$, consider the standard complex bilinear form $b$ on $\Bbb C^k$
  and let $\langle\ ,\ \rangle$ be its imaginary part. Let $V\subset\Bbb C^k$ be a
  real subspace of dimension $k$, which is isotropic for $\langle\ ,\ \rangle$.
  Suppose that $\Bbb C^k=Z_1\oplus Z_2$ is a decomposition into a sum of two complex
  subspaces, both of which are isotropic for $b$, and such that
  $Z_2\cap V=\{0\}$, and hence the projection $\pi$ onto the first summand
  restricts to a (real) linear isomorphism $V\to Z_1$. Then the restriction of the
  real part $\Re(b)$ of $b$ to $V$ is Hermitian with respect to the pullback along
  $\pi|_V$ of the complex structure on $Z_1$.
\end{lemma}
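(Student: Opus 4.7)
The plan is to reformulate the Hermitian condition as the identity $\Re(b)(J'v, J'w) = \Re(b)(v, w)$ for all $v, w \in V$, where $J'$ denotes the pulled-back complex structure on $V$, and then use isotropy of $V$ to reduce everything to a short bookkeeping exercise. The first observation is that the hypothesis $\operatorname{Im}(b)|_V \equiv 0$ forces $b(V, V) \subset \Bbb R$, so that $b|_V = \Re(b)|_V$ and it suffices to prove $b(J'v, J'w) = b(v, w)$.

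Next I would exploit that $\pi|_V$ is a real isomorphism onto $Z_1$ and, since $\dim_\Bbb R V = \dim_\Bbb R Z_1 = k$, rewrite $V$ as the graph of a unique real linear map $T\colon Z_1 \to Z_2$, so every $v \in V$ has the form $z + T(z)$ with $z = \pi(v)$. The defining property $\pi(J'v) = i\,\pi(v)$ together with $J'v \in V$ then forces $J'(z + T(z)) = iz + T(iz)$. Using the $b$-isotropy of $Z_1$ and $Z_2$, only cross terms survive in
\[
b(z + T(z),\, z' + T(z')) = b(z, T(z')) + b(T(z), z'),
\]
and, after pulling factors of $i$ out of both arguments via complex bilinearity of $b$, one likewise gets
\[
b(J'v, J'w) = i\bigl[b(z, T(iz')) + b(T(iz), z')\bigr].
\]

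The heart of the argument is then to apply the isotropy identity $b(\cdot,\cdot) \in \Bbb R$ not only to the pair $(v,w)$ but also to the mixed pairs $(v, J'w)$ and $(J'v, w)$, which also lie in $V$. A short computation gives
\[
b(v, J'w) = b(z, T(iz')) + i\,b(T(z), z'), \qquad b(J'v, w) = i\,b(z, T(z')) + b(T(iz), z'),
\]
and vanishing of their imaginary parts, combined with $\operatorname{Im}(i\zeta) = \Re(\zeta)$, yields
\[
\operatorname{Im} b(z, T(iz')) = -\Re b(T(z), z'), \qquad \operatorname{Im} b(T(iz), z') = -\Re b(z, T(z')).
\]
Substituting these into $\Re b(J'v, J'w) = -\operatorname{Im}\bigl[b(z, T(iz')) + b(T(iz), z')\bigr]$ gives exactly $\Re b(v, w)$; since both $b(J'v, J'w)$ and $b(v, w)$ are real by isotropy, this is the desired identity. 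The only real obstacle is keeping careful track of real and imaginary parts of the four complex bilinear pairings, and the graph description of $V$ is what makes this tractable by reducing everything to identities on $Z_1 \times Z_1$.
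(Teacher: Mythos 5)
Your proof is correct and follows essentially the same route as the paper: both write $V$ as the graph of a real linear map $Z_1\to Z_2$, reduce $b|_{V\times V}$ to the cross terms via isotropy of $Z_1$ and $Z_2$, and then exploit isotropy of $V$ for $\Im(b)$ to get invariance under the pulled-back complex structure. The only difference is bookkeeping: the paper packages the isotropy of $V$ as skew-symmetry of the graph map with respect to $\langle\ ,\ \rangle$ and checks invariance under $z\mapsto iz$ directly, while you extract the same information by applying reality of $b$ on $V$ to the mixed pairs $(v,J'w)$ and $(J'v,w)$.
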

\begin{proof}
  Composing the projection onto the second summand with $(\pi|_V)^{-1}$, we obtain a
  real linear map $\ph:Z_1\to Z_2$ such that $V=\{z+\ph(z):z\in Z_1\}$. For $j=1,2$
  take $v_j\in V$ and write it as $v_j=z_j+\ph(z_j)$ for $z_j\in Z_1$ to obtain
  $b(v_1,v_2)=b(z_1,\ph(z_2))+b(\ph(z_1),z_2)$. Since $V$ is isotropic for
  $\langle\ ,\ \rangle$, we conclude that
  $\langle z_1,\ph(z_2)\rangle=-\langle \ph(z_1),z_2\rangle$. Since $z_1,z_2\in Z_1$
  may be arbitrary, $\ph$ is skew symmetric with respect to $\langle\ ,\ \rangle$. On
  the other hand, we can compute $\Re(b)(v_1,v_2)$ as the real part of the above
  expression, which coincides with
  $\langle iz_1,\ph(z_2)\rangle+\langle \ph(z_1),iz_2\rangle$. Skew symmetry of $\ph$
  readily implies that this remains unchanged if we replace $z_1$ by $iz_1$ and $z_2$
  by $iz_2$, which implies the claim of the lemma.
\end{proof}

Having this at hand, we can prove the slice theorem. Given an orbit $\Cal
O_{(r,s)}\subset IGr^\pm(n,\Bbb R^{(n,n)})$, we know from Proposition \ref{prop4.1}
that $n-r-s$ is even, and we denote this by $2k$. Now we denote by $\Cal H_k$ the set
of real matrices of size $2k\x 2k$ which represent complex matrices of size $k\x k$
that are Hermitian. In particular, these are real symmetric matrices of even rank for
which both parts of the signature are even.

\begin{thm}\label{thm4.3}
  Each of the orbits $\Cal O_{(r,s)}\subset G/P:=IGr^\pm(n,\Bbb R^{(n,n)})$ is an
  embedded submanifold. For each point $x\in\Cal O_{(r,s)}$, there is an open
  neighborhood $U$ of $x$ in $G/P$ and a diffeomorphism $\ph$ from $U$ onto an open
  neighborhood of $(U\cap\Cal O_{(r,s)})\x\{0\}$ in $(U\cap\Cal O_{(r,s)})\x\Cal H_k$
  such that $U\subset \cup_{r'\geq r,s'\geq s}\Cal O_{(r',s')}$ and a point $y\in U$
  lies in $\Cal O_{(r',s')}$ if and only if the second component of $\ph(y)$ has
  signature $(r'-r,s'-s)$.
\end{thm}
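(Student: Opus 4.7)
The plan is to mirror the argument of Theorem~\ref{thm3.3} and Corollary~\ref{cor3.3}, with the new feature that the defining section will take values in a rank-$k^2$ Hermitian-type subbundle rather than the full bundle of symmetric forms. Fix $x\in\Cal O:=\Cal O_{(r,s)}$, let $V_x\subset\Bbb C^n$ be the corresponding maximally isotropic subspace, and set $2k:=n-r-s$ (which is even by Proposition~\ref{prop4.1}). Using the section $\si_1\in\Ga(S^2E^*)$ from Theorem~\ref{thm4.2}, whose value at $V\in G/P$ is the restriction of $\Re(b)$ to $V$, carry out the construction from Theorem~\ref{thm3.3} verbatim: choose $W_x\subset V_x$ complementary to the null space $N_x=J(V_x)\cap V_x$ with $\si_1(x)|_{W_x}$ non-degenerate of signature $(r,s)$, extend to a smooth subbundle $W\subset E|_U$ over a neighborhood $U$ of $x$, and set $\tilde E_y:=W_y^{\perp,\si_1(y)}$. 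This is a rank-$2k$ subbundle which along $\underline U:=U\cap\Cal O$ equals $N_y$, so $q\circ\si_1\in\Ga(S^2\tilde E^*)$ vanishes identically on $\underline U$.

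The main new task is to produce a natural rank-$k^2$ subbundle $\Cal H\subset S^2\tilde E^*|_U$ such that $\pi_{\Cal H}\circ q\circ\si_1$ becomes a defining section in the sense of Definition~\ref{def2.7}. For the Hermitian interpretation, pick a complex Lagrangian $Z_2\subset\Bbb C^n$ in the sense used in Lemma~\ref{lem4.3} with $Z_2\cap V_x=\{0\}$; by transversality this persists on $U$, and the projection $V_y\to\Bbb C^n/Z_2$ pulls back a complex structure $J_y^{\mathrm{proj}}$ on each $V_y$. By Lemma~\ref{lem4.3}, $\si_1(y)$ is Hermitian with respect to $J_y^{\mathrm{proj}}$; in particular $J_x^{\mathrm{proj}}$ preserves $N_x=\tilde E_x$ (null spaces of Hermitian forms are complex). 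Propagate this $k$-complex-dimensional structure along $\tilde E|_U$ (shrinking $U$ if necessary) to obtain a rank-$k^2$ subbundle $\Cal H\subset S^2\tilde E^*$ of $J_{\tilde E}^{\mathrm{proj}}$-invariant forms, which along $\underline U$ agrees fibrewise with the real realization of Hermitian $k\x k$-matrices $\Cal H_k$ appearing in Proposition~\ref{prop4.1}.

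The defining section property then reduces to showing that at every $y\in\underline U$ the composite $\pi_{\Cal H}(\nabla(q\circ\si_1)(y)):T_y(G/P)\to\Cal H_y$ is surjective. Following the calculation from Theorem~\ref{thm3.3}, choose a Weyl structure and apply Proposition~\ref{prop2.6} to the parallel tractor $s_1\in\Ga(S^2_0\Cal T^*)$ induced by $J\in S^2_0\Bbb R^{(n,n)*}$: one obtains $\nabla\si_1(y)=-\partial(\mu(y))$, where $\mu(y)\in(\Cal V^1/\Cal V^2)_y$ is the next filtration component of $s_1(y)$. Non-degeneracy of $s_1(y)$ on $\Cal T_y$, together with the $H_{\Cal O}\cap P$-equivariant identification of $\mu$ with the infinitesimal-transversal data of Proposition~\ref{prop4.1}, forces the image of the restriction-and-projection map $T_y(G/P)\to\Cal H_y$ to fill out all of $\Cal H_y$, since the codimension $k^2$ of $\Cal O$ exactly matches the rank of $\Cal H$.

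With the defining section in hand, applying Proposition~\ref{prop2.7} with a local frame of $\Cal H$ produces a diffeomorphism $\ph:U\to(U\cap\Cal O)\x\Bbb R^{k^2}$; identifying $\Bbb R^{k^2}$ with $\Cal H_k$ via the frame gives the claimed product structure, and in particular shows $\Cal O_{(r,s)}$ to be an embedded submanifold. The signature characterization is immediate: on $V_y=W_y\oplus\tilde E_y$ the form $\si_1(y)$ splits as $(r,s)$ on $W_y$ and as the Hermitian matrix $\ph_2(y)$ on $\tilde E_y$, so $y\in\Cal O_{(r',s')}$ precisely when $\ph_2(y)$ has signature $(r'-r,s'-s)$. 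The main obstacle is the construction of $\Cal H$ and the verification that $\partial(\mu)$ projects precisely onto it: the pullback complex structure $J_y^{\mathrm{proj}}$ depends on the auxiliary choice of $Z_2$, so one must show that the resulting subbundle is independent of $Z_2$ and matches the intrinsic structure arising from the tractor-theoretic computation in Proposition~\ref{prop4.1}; the appearance of Hermitian $k\x k$-matrices (rather than some other natural rank-$k^2$ subbundle) is pinned down precisely by this representation-theoretic matching.
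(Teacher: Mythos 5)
Your skeleton matches the paper's proof (choose $W$, form $\tilde E$ as the $\si_1$--orthocomplement, restrict to get $q\o\si_1$, compute its derivative via Proposition \ref{prop2.6} and conclude with Proposition \ref{prop2.7}), but the two genuinely new ingredients of this case are not actually supplied. First, the Hermitian structure: you need a complex structure on $\tilde E$ over the \emph{whole} neighbourhood $U$ for which $q\o\si_1(y)$ is Hermitian at every $y\in U$, not just along the orbit. Otherwise $q\o\si_1$ does not take values in your rank--$k^2$ bundle $\Cal H$, the projection $\pi_{\Cal H}$ discards part of the form, and both the identification of the zero set and the signature characterization of $\Cal O_{(r',s')}$ via the second component of $\ph$ break down. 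Your construction does not deliver this: a splitting of $(\Bbb C^n,b)$ into two complex isotropic subspaces $Z_1\oplus Z_2$ exists only when $n$ is even (isotropic subspaces have complex dimension at most $\lfloor n/2\rfloor$), so the auxiliary Lagrangian $Z_2\subset\Bbb C^n$ you want to choose need not exist; and even when it does, the pulled-back complex structure on $V_y$ preserves $\tilde E_y$ only at points of the orbit (where $\tilde E_y$ is the null space of a Hermitian form) — off the orbit there is no reason for invariance, and ``propagate this structure along $\tilde E|_U$'' is not a construction, nor would an arbitrary extension make $\si_1|_{\tilde E_y}$ Hermitian for $y\notin\Cal O_{(r,s)}$. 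The paper resolves exactly this point inside the tractor bundle: it forms $W\oplus s_1(W)$, takes its complex orthocomplement $Z\supset\tilde E$ (of complex rank $2k$, so evenness is automatic), splits $Z=Z_1\oplus Z_2$ into $s_1$--invariant complex isotropic subbundles with $Z_2\cap\tilde E=\{0\}$, and applies Lemma \ref{lem4.3} fibrewise \emph{inside} $Z_y$, which gives Hermitian--ness of $q\o\si_1$ on all of $U$. (Independence of the choice of $Z_2$ is not needed; the paper makes such a local choice too.)

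Second, the surjectivity of $\nabla(q\o\si_1)(y)$ onto $\Cal H_y$ along the orbit is asserted, not proved: ``the codimension $k^2$ matches the rank of $\Cal H$'' is precisely the statement that makes the slice theorem work and cannot substitute for the computation — a priori the image could be a proper subspace of $S^2\tilde E^*_y$, or could fail to project onto $\Cal H_y$. The paper computes it explicitly: here $\Cal V^1/\Cal V^2\cong(E^*\otimes E)_0$ and $\partial(\mu)(\al\wedge\be)=\mu(\al)\vee\be-\mu(\be)\vee\al$ up to a nonzero factor; at $y\in\Cal O_{(r,s)}$ one identifies $\mu(y)|_{\tilde E_y^*}$ with multiplication by $i$ for the complex structure constructed above (using $s_1(y)(\tilde E_y)\subset E_y$), so that $-\partial(\mu(y))(i\al\wedge\be)=\al\vee\be+i\al\vee i\be$ for $\al,\be\in\tilde E_y^*$, and such elements span exactly the Hermitian forms — this gives both that the image lies in $\Cal H_y$ and that it is all of $\Cal H_y$. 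If you wanted to bypass this by representation theory, you would still have to show that the induced map on the infinitesimal transversal of Proposition \ref{prop4.1} is nonzero and $H_{\Cal O}\cap P$--equivariant and that source and target are irreducible; as written, your proposal explicitly defers these points as ``the main obstacle'', so the core of the theorem remains open.
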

\begin{proof}
  The first part of this is closely parallel to the proof of Theorem \ref{thm3.3}:
  There is a connected neighborhood $\underline{U}$ of $x$ in $\Cal O_{(r,s)}$, which
  is an embedded submanifold of $G/P$. Possibly shrinking $\underline{U}$, there is a
  connected open neighborhood $U$ of $x\in G/P$ and a smooth subbundle
  $W\subset E|_U$ of rank $r+s$ such that for each $y\in U$, the restriction of the
  symmetric bilinear form $\si_1(y)$ to $W_y\subset E_y$ is non--degenerate and has
  signature $(r,s)$. Defining $\tilde E_y\subset E_y$ to be the space of those
  $v\in E_y$ such that $\si_1(y)(v,w)=0$ for all $w\in W_y$, we obtain a smooth
  subbundle $\tilde E\subset E|_U$ of even rank $2\ell$. By construction, for
  $y\in U$ the null--space of $\si_1(y)$ is contained in $\tilde E_y$ and it
  coincides with $\tilde E_y$ if $y\in\Cal O_{(r,s)}$. In particular, we conclude
  that $U\subset \cup_{r'\geq r,s'\geq s}\Cal O_{(r',s')}$ and that a point $y\in U$
  lies in $\Cal O_{(r',s')}$ if and only if the restriction of $\si_1(y)$ to
  $\tilde E_y$ has signature $(r'-r,s'-s)$.

  At this point we need additional input to get the Hermitian aspects into the
  picture. For each $y\in U$, $W_y$ is a real subspace in $\Bbb C^n$ of real
  dimension $r+s$ on which $\langle\ ,\ \rangle$ is identically zero. But $\Re(b)$ is
  non--degenerate on $W_y$, so $W_y\cap J(W_y)=\{0\}$ and that the restriction of $b$
  to the complex subspace $W_y\oplus J(W_y)\subset \Bbb C^n$ is
  non--degenerate. Otherwise put, viewing $W$ as a smooth subbundle in the standard
  tractor bundle $\Cal T|_U$ and viewing the parallel section $s_1$, from the proof of
  Theorem \ref{thm4.2}, as a section of $L(\Cal T,\Cal T)$, we can form a smooth
  subbundle $W\oplus s_1(W)\subset\Cal T$, which by construction is invariant under
  $s_1$. Using $s_1$, we can extend the canonical inner product $\langle\ ,\ \rangle$
  on $\Cal T$ to a field of non--degenerate $\Bbb C$--valued bilinear forms, which are
  complex bilinear with respect to $s_1$. The restriction of this form to
  $W\oplus s_1(W)$ is non--degenerate, so we can form the complex orthocomplement,
  which is an $s_1$--invariant subbundle $Z\subset\Cal T$ on which the complex bundle
  metric is non--degenerate, too.

  Now by definition, $\tilde E_y$ is perpendicular to $W_y$ with respect to $\Re(b)$
  and since $\tilde E_y\subset E_y$, it is also perpendicular to $W_y$ with respect
  to $\langle\ ,\ \rangle$. This implies that $\tilde E_y\subset Z_y$ for all
  $y\in U$. On the other hand, the complex corank of $Z$ in $\Cal T$ by construction
  equals the real corank of $W$ in $E$ and thus is even. This implies that locally we
  can write $Z=Z_1\oplus Z_2$ for two smooth subbundles which are invariant under
  $s_1$ and isotropic for the complex bundle metric. Possibly shrinking $U$ and
  starting in such a way that in the point $x\in\Cal O_{(r,s)}$ we take $Z_1$ to be
  $\tilde E_x$ (which coincides with the null space of $\si_1(x)$ and thus is complex
  isotropic), we may assume that $Z_2\cap\tilde E=\{0\}$ on all of $U$. But this
  implies that the projection onto the first factor restricts to an isomorphism
  $\tilde E\to Z_1$ of real vector bundles. Pulling back the complex structure from
  $Z_1$ then makes $\tilde E$ into a complex vector bundle of complex rank $k$, and
  applying Lemma \ref{lem4.3} point--wise, we conclude that the restriction of
  $\si_1$ to $\tilde E$ defines a Hermitian bundle metric on the complex vector
  bundle $\tilde E$.

  Having this at hand, we can continue as in the proof of Theorem \ref{thm3.3}.  and
  Corollary \ref{cor3.3}. Restricting bilinear forms defines a map
  $q:S^2E^*\to S^2\tilde E^*$ and we know that $q\o\si_1$ has values in the subbundle
  $\Cal H^2\tilde E^*$ of forms which are Hermitian with respect to the complex
  structure constructed above. We also know that $q\o \si_1$ vanishes identically
  along $\underline{U}:=U\cap \mathcal{O}_{(r,s)}\subset U$. To complete the proof, it suffices to show that,
  possibly shrinking $U$ and $\underline{U}$, $q\o\si_1\in\Ga(\Cal H^2\tilde E^*)$ is
  a defining section for $\underline{U}$. Since $q\o\si_1$ by construction vanishes
  along $\Cal O_{(r,s)}$ this boils down to proving that
  $\nabla(q\o\si)(y):T_y(G/P)\to \Cal H^2\tilde E^*$ is surjective for each
  $y\in\Cal O_{(r,s)}$. Having shown that, we can complete the proof exactly as the
  one of Corollary \ref{cor3.3}.

  The first steps in the analysis of the derivative are as in the proof of Theorem
  \ref{thm3.3} and we use the notation from that proof. Any linear connection on $E$
  induces a linear connection on $\tilde E$, there are induced linear connections
  $\nabla$ on $S^2E$ and $\tilde\nabla$ on $S^2\tilde E$ and for each vector field
  $\xi$, we get $\tilde\nabla_\xi(q\o\si)(y)=q(\nabla_\xi\si(y))$. Starting with a
  Weyl connection on $E$, we can use Proposition \ref{prop2.7} to compute
  $\nabla\si(y)$ as $-\partial(\mu(y))$. For the isotropic Grassmannian, the tangent
  bundle $T(G/P)$ is isomorphic to $\La^2E^*$ (which defines the flat almost
  spinorial structure) while for $\Cal V=S^2_0\Cal T^*$, we get
  $\Cal V/\Cal V^1=S^2E^*$ and $\Cal V^1/\Cal V^2=(E^*\otimes E)_0$, where the
  subscript indicates the trace--free part. Hence $\partial$ maps $(E^*\otimes E)_0$
  to $\La^2E\otimes S^2E^*$ and since this comes from a $GL(n,\Bbb R)$--equivariant
  map between the inducing representations, it has to be given by tensoring with the
  identity and then symmetrizing the $E$--components and alternating the
  $E^*$--components. Otherwise put, if we view $\mu$ as a linear map $E^*\to E^*$,
  then $\partial(\mu)$ sends $\al\wedge\be$ to (a nonzero multiple of)
  $\mu(\al)\vee\be-\mu(\be)\vee\al$.

  Now for $y\in\Cal O_{(r,s)}$ corresponding to the isotropic subspace $E_y$, we know
  that $\tilde E_y$ is the null--space of $\si_1(y)$ and from the proof of Lemma
  \ref{lem4.1} we know that this coincides with the maximal complex subspace of
  $E_y$. Hence we get $s_1(y)(E_y)\cap E_y=\tilde E_y$ and $s_1(y)$ makes
  $\tilde E_y$ into a complex vector space. The fact that, viewed as an endomorphism
  of $\Cal T$, $s_1(y)$ maps $\tilde E_y$ to $\tilde E_y\subset E_y$ says that
  $\si_1(y)|_{E_y}=0$ and hence $s_1(y)|_{\tilde E_y}=\mu(y)|_{\tilde E_y}$. Thus the
  restriction of $\mu(y)$ to $\tilde E_y$ is simply multiplication by $i$ for the
  complex structure on $\tilde E_y$ we have constructed. By definition of the
  structure on the dual, $\mu(y)$ is given by multiplication by $i$ as a map
  $\tilde E_y^*\to\tilde E_y^*$. But this exactly means that for
  $\al,\be\in\tilde E_y^*$, we get (up to a non--zero factor)
  $\partial(\mu)(\al\wedge\be)=i\al\vee\be-\al\vee i\be$, so this lies in
  $\Cal H^2\tilde E_y^*$. On the other hand, any element of $\Cal H^2\tilde E_y^*$
  can be written as a linear combination of elements of the form
  $\al\vee\be+i\al\vee i\be$ and such an element is obtained (up to a factor) as
  $-\partial(\mu)(i\al\wedge\be)$, which completes the proof.
\end{proof}

\begin{remark}\label{rem4.3}
  (1) Similarly to Remark \ref{rem3.4}, there is a nice interpretation of Theorem
  \ref{thm4.3} in terms of local compactifications. Let us first consider the case
  that $H/K=SO(n,\Bbb C)/SO(n)\cong\Cal O_{(n,0)}$ for which we know that
  $\overline{H/K}=\cup_\nu\Cal O_{(n-2\nu,0)}$ with integers $\nu$ such that
  $0\leq\nu\leq \tfrac{n}{2}$. Then the model is a local compactification of
  $GL(\nu,\Bbb C)/U(\nu)$, which can be identified with the space positive definite
  Hermitian $\nu\x\nu$--matrices, locally compactified by the space of positive
  semi--definite Hermitian matrices. Theorem \ref{thm4.3} then says that for
  sufficiently small open subsets $W\subset \Cal O_{(n-2\nu,0)}$, a neighborhood of
  $W$ in $\overline{H/K}$ is isomorphic to the product of $W$ with a neighborhood of
  $0$ in that local compactification of $GL(\nu,\Bbb C)/U(\nu)$.

  For other signatures, i.e.~$H/K=SO(n,\Bbb C)/SO(p,q)\cong \Cal O_{(p,q)}$ with
  $p+q=n$, there is a similar description in terms of local compactifications of
  $GL(\nu,\Bbb R)/U(p',q')$ for $p'+q'=\nu$, $p'\leq p$ and $q'\leq q$. The relevant
  local compactification of Hermitian matrices of signature $(p',q')$ is given as
  Hermitian matrices of signatures $(r,s)$ with $r\leq p'$ and $s\leq q'$.
  
(2) As we have seen already in Lemma \ref{lem4.1} the ranks always drop in steps of
  two in our example. This did not cause problems in the description of orbit
  closures via zero--loci of BGG solutions in Theorem \ref{thm4.2}. However, already
  there the strange situation occurs that, for example, the sections $\si_n$ and
  $\si_{n-1}$ have the same zero locus, since rank smaller than $n$ always implies
  rank smaller than $n-1$. The problem becomes serious, however, when one tries to
  generalize Proposition \ref{prop3.4} to the current setting. While the orbits $\Cal
  O_{(r,s)}$ of largest non--full rank (i.e.~with $r+s=n-2$) still form embedded
  hypersurfaces in $G/P$ by Theorem \ref{thm4.3}, the section $\si_n$ (which is the
  only $\si_i$ that has values in a line bundle), certainly is not a defining density
  for these hypersurfaces. Indeed, arguments similar to the ones used in the proof of
  Proposition \ref{prop3.4} show that $\si_n$ and $\nabla\si_n$ simultaneously vanish
  in points where $\si_1$ has rank less than $n-1$, so this happens in all points of
  $\Cal O_{(r,s)}$.

Indeed, in the current situation, we do not see a way how to construct a natural
defining density for the orbits which are hypersurfaces. To obtain a defining
density, it would seem necessary to exploit the fact that, as shown in the proof of
Theorem \ref{thm4.3}, the metric on a two--dimensional complement to the tangent
space of the orbit is Hermitian for an appropriate complex structure. However, that
complex structure seems to be canonical only along the orbit itself, where this
transverse metric vanishes identically. 
\end{remark}

\end{document}